\newcommand{\snb}[1]{}
\numberwithin{equation}{section}
\newtheorem{theorem}{Theorem}[section]
\newtheorem*{theorem*}{Theorem}
\newtheorem{corollary}[theorem]{Corollary}
\newtheorem{lemma}[theorem]{Lemma}
\newtheorem{proposition}[theorem]{Proposition}
\newtheorem{definition}[theorem]{Definition}
\theoremstyle{remark}
\newtheorem*{remark*}{Remark}
\newcommand{\bke}[1]{\left ( #1 \right )}
\newcommand{\bkt}[1]{\left [ #1 \right ]}
\newcommand{\bket}[1]{\left \{ #1 \right \}}
\newcommand{\norm}[1]{ \| #1  \|}
\newcommand{\abs}[1]{\left | #1 \right |}
\newcommand\al{\alpha}
\newcommand\be{\beta}
\newcommand\ga{\gamma}
\newcommand\de{\delta}
\newcommand\ve{\varepsilon}
\newcommand\e {\epsilon}
\renewcommand\th{\theta}
\newcommand\la{\lambda}
\newcommand\si{\sigma}
\newcommand\om{\omega}
\newcommand\De{\Delta}
\newcommand\Om{\Omega}
\newcommand{\R}{\mathbb{R}}
\newcommand{\RR}{\mathbb{R}}
\newcommand{\ZZ}{\mathbb{Z}}
\newcommand{\NN}{\mathbb{N}}
\renewcommand{\div}{\mathop{\rm div}}
\newcommand{\curl} {\mathop{\rm curl}}
\newcommand{\supp} {\mathop{\mathrm{supp}}}
\newcommand{\spt}{\mathop{\mathrm{spt}}}
\newcommand{\esssup} {\mathop{\rm ess\,sup}}
\newcommand{\pd}{\partial}
\newcommand{\nb}{\nabla}
\newcommand{\td}{\tilde}
\newcommand{\lec}{{\ \lesssim \ }}
\newcommand{\oo}{\infty}
\newcommand{\EQ}[1]{\begin{equation}\begin{split} #1 \end{split}\end{equation}}
\newcommand{\EQN}[1]{\begin{equation*}\begin{split} #1 \end{split}\end{equation*}}
\newcommand{\I}{\infty}
\newcommand{\loc}{\mathrm{loc}}
\newcommand{\uloc}{\mathrm{uloc}}
\newcommand{\far}{\mathrm{far}}
\newcommand*{\dt}[1] {\accentset{\mbox{\large\bfseries .}}{#1}}
\renewcommand{\dot}{\dt}
\newcommand{\wkto}{\rightharpoonup}
\newcommand{\longrightharpoonup}{\relbar\joinrel\rightharpoonup}
\begin{document}

\title{Short time regularity of Navier-Stokes flows \\ with locally $L^3$ initial data
and applications}
\author{Kyungkeun Kang%
\thanks{Department of Mathematics, Yonsei University, Seoul 120-749,
South Korea. Email: \texttt{kkang@yonsei.ac.kr}}
\and
Hideyuki Miura%
\thanks{Department of Mathematical and Computing Sciences, Tokyo Institute of Technology, Tokyo 152-8551, Japan. Email: \texttt{miura@is.titech.ac.jp}}
\and
Tai-Peng Tsai%
\thanks{Department of Mathematics, University of British Columbia,
Vancouver, BC V6T 1Z2, Canada. Email:  \texttt{ttsai@math.ubc.ca}
}}
\date{} %
\maketitle
\begin{abstract}
We prove
short time regularity of
suitable weak solutions
of 3D incompressible
Navier-Stokes equations near a point where the initial data is locally in $L^3$.
The result is applied to the regularity problems of solutions with uniformly small local $L^3$ norms, and of forward discretely self-similar solutions.

  {\it Keywords}: Navier-Stokes equations, regularity, Herz space, discretely self-similar.

{\it Mathematics Subject Classification (2010)}: 35Q30, 76D05, 35D30, 35B65
\end{abstract}

\section{Introduction}

The Navier-Stokes equations describe the evolution of a viscous incompressible fluid's velocity field $v$ and its associated scalar pressure $\pi$.  They are required to satisfy
\begin{equation}\label{NS}\tag{\textsc{ns}}
\partial_tv-\Delta v +v\cdot\nabla v+\nabla \pi = 0,
\quad
\nabla \cdot v =0,
\end{equation}
in the sense of distributions.  For our purposes, \eqref{NS} is applied on $\R^3\times (0,\I)$ and $v$ evolves from a prescribed, divergence free initial data $v_0:\R^3\to \R^3$.  Solutions to \eqref{NS} satisfy a
natural scaling: if $v$ satisfies \eqref{NS}, then for any $\lambda>0$
\begin{equation}
    v^{\lambda}(x,t)=\lambda v(\lambda x,\lambda^2t),
\end{equation}
is also a solution with pressure
\begin{equation}
    \pi^{\lambda}(x,t)=\lambda^2 \pi(\lambda x,\lambda^2t),
\end{equation}
and initial data
\begin{equation} \label{v0scaling}
v_0^{\lambda}(x)=\lambda v_0(\lambda x).
\end{equation}
A solution is called self-similar (SS) if $v^\lambda(x,t)=v(x,t)$ for all $\lambda>0$ and is discretely self-similar with factor $\lambda$ (i.e.~$v$ is $\lambda$-DSS) if this scaling invariance holds for a given $\lambda>1$. Similarly, $v_0$ is self-similar (a.k.a.~$(-1)$-homogeneous) if $v_0(x)=\lambda v_0(\lambda x)$ for all $\lambda>0$ or $\lambda$-DSS if this holds for a given $\lambda>1$.
These solutions can be either forward or backward if they are defined on $\R^3\times (0,\I)$ or $\R^3\times (-\I,0)$ respectively.  In this paper we work exclusively with forward solutions and omit the qualifier ``forward''.

Self-similar solutions are interesting in a variety of contexts as candidates for ill-posedness  or finite time blow-up of solutions to the 3D Navier-Stokes equations (see \cite{GuiSve,JiaSverak,JiaSverak2,leray,NRS,Tsai-ARMA} and the discussion in \cite{BT1}. Forward self-similar solutions are compelling candidates for non-uniqueness \cite{JiaSverak2,GuiSve}. Until recently, the existence of forward self-similar solutions was only known for small data (see the references in \cite{BT1}). Such solutions are necessarily unique.  In \cite{JiaSverak},  Jia and \v Sver\'ak constructed forward self-similar solutions for large data where the data is assumed to be H\"older continuous away from the origin.
This result has been generalized in a number of directions by a variety of authors \cite{BT1,BT2,BT3,BT5, Chae-Wolf,KT-SSHS,LR2,Tsai-DSSI}; see also the survey \cite{JST}.

The motivating problem for this paper is the following question: It is shown in Tsai \cite{Tsai-DSSI} that, if a
$\la$-DSS initial data $v_0\in C^\al_{\loc}(\R^3 \setminus \{0\})$, $0<\al<1$, with $M=\norm{v_0}_{ C^\al(B_2 \setminus B_1)}<\infty$, and if $\la-1\le c_1(M)$ for some sufficiently small positive constant $c_1$ depending on $M$, then there is a $\la$-DSS solution $v$ with initial data $v_0$ such that $v$ is regular, that is, $v \in L^\infty_\loc(\R^4_+)$.
The question is: What if we weaken the assumption of $v_0$ so that $v_0$ is in $L^{3,\infty}(\R^3)$ (weak $L^3$)?
Note that for $v_0 \in L^{3,\infty}(\R^3)$ that is $\la$-DSS and divergence free,  Bradshaw and Tsai \cite{BT1} constructs at least one $\la$-DSS local Leray solution, based on a weak solution approach. Thus regularity cannot be obtained as a by-product of the existence proof, and one needs to prove the regularity of any such solutions if $\la-1$ is sufficiently small.

\medskip

Motivated by this problem, we need to study solutions whose initial data is locally in $L^3$, as it is also shown in \cite{BT1} that, when $v_0$ is $\la$-DSS, then $v_0 \in L^{3,\infty}(\R^3)$ if and only if $v_0 \in L^3(B_\la \setminus B_1)$.
The following local theorem is our first main result.

\begin{theorem} \label{th1.1}

There are positive constants $\epsilon_0$, $C_1$
such that the following holds.
For any $M>0$, there exist $T_1=T_1(M)\in (0,1]$ 
such that, if
$(v,\pi)$ is any suitable weak solution of the Navier-Stokes equations \eqref{NS} in $B_1 \times (0,T_1)$
with initial data $v_0$,
\begin{equation}\label{th1.1-0a}
\norm{v_0}_{L^3(B_1)}\le \epsilon_0,
\end{equation}
and
\begin{equation}\label{th1.1-0}
\norm{v}_{L^{\infty}_tL^2_x\cap L^2_tH^1_x (B_1 \times (0,T_1))}^2
+ \norm{\pi}_{L^{2}_t L^{3/2}_x(B_1 \times (0,T_1))}
\le M,
\end{equation}
then
$v$ is regular in $B_{1/4}  \times (0,T_1)$ with
\begin{equation}\label{th1.1-2}
|v(x,t)| \le \frac {C_1}{\sqrt t},\quad \text{in}\quad B_{1/4}  \times (0,T_1),
\end{equation}
\begin{equation}\label{th1.1-1}
\sup_{z_0  \in B_{\frac14}\times (0,T_1)} \sup_{0<r<\infty}\frac{1}{r^{2}}\int_{Q_{z_0,r} \cap [B_1 \times (0,T_1)]}\abs{v}^3
dz\le 1.
\end{equation}
We can choose $T_1(M)=\e (1+M)^{-6}$ for some sufficiently small $\e$ independent of $M$.  
\end{theorem}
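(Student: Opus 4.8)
The plan is to compare $v$ with a smooth solution generated by the small local data, show that the difference starts at zero and remains small on a short time interval, and then feed this smallness into a Caffarelli--Kohn--Nirenberg--type $\varepsilon$-regularity criterion. \textbf{Step 1 (comparison solution).} Fix a cutoff $\chi\in C_c^\infty(B_1)$ with $\chi\equiv1$ on $B_{3/4}$ and set $V_0=\mathbb P(\chi v_0)$ (Leray projection; one may instead correct $\div(\chi v_0)$ by the Bogovskii operator on $B_1$). Since $\chi v_0$ is supported in the fixed ball $B_1$, $\|V_0\|_{L^2(\RR^3)}\le\|\chi v_0\|_{L^2(B_1)}\lesssim\|v_0\|_{L^3(B_1)}\le\e_0$ and likewise $\|V_0\|_{L^3(\RR^3)}\lesssim\e_0$, and $V_0=v_0$ on $B_{3/4}$ up to a small smooth correction. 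By Kato's small-data theory, for $\e_0$ small there is a unique global mild solution $V$ on $\RR^3\times(0,\infty)$ with $\sup_t\|V(t)\|_{L^3}+\sup_t\sqrt t\,\|V(t)\|_{L^\infty}+\|V\|_{L^\infty_tL^2_x\cap L^2_tH^1_x}\lesssim\e_0$, plus parabolic bounds on $\nabla V$ and an associated pressure $\pi_V=\sum_{ij}R_iR_j(V_iV_j)$ with $\|\pi_V\|_{L^\infty_tL^{3/2}_x}\lesssim\e_0^2$; in particular $(V,\pi_V)$ is a suitable weak solution.

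\textbf{Step 2 (short-time local energy estimate for $u=v-V$).} On $B_{3/4}\times(0,T_1)$ the difference $u$ and $p=\pi-\pi_V$ satisfy $\partial_t u-\Delta u+\div(u\otimes u+u\otimes V+V\otimes u)+\nabla p=0$, $\div u=0$, with $u(\cdot,0)=0$ on $B_{3/4}$, and $u$ inherits a local energy inequality from those of $v$ and $V$. Testing with $\phi^2u$ for a cutoff $\phi$ with $\phi\equiv1$ on $B_{5/8}$ and $\operatorname{supp}\nabla\phi\subset B_{3/4}\setminus B_{5/8}$, the resulting terms split into: (i) the initial term, which vanishes; (ii) the terms carrying a factor of $V$ and the genuine nonlinearity $\int\phi^2(u\cdot\nabla u)\cdot u$, which are absorbed into the dissipation using the $\e_0$-smallness of $V$ and, via a bootstrap, the smallness of the energy of $u$; (iii) the flux terms supported on the annulus, namely $\int_0^\tau\!\!\int|u|^2\,u\cdot\nabla\phi^2$, its $V$-analogues, and the pressure flux $\int_0^\tau\!\!\int\big(p-\overline p(t)\big)\,u\cdot\nabla\phi^2$, where one uses $\int u\cdot\nabla\phi^2\,dx=0$ to subtract the spatial mean $\overline p(t)$ of $p$ over $B_{3/4}$. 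Estimating the flux terms by the \emph{subcritical} integrability available, $\|v\|_{L^{10/3}_{x,t}(B_1\times(0,\tau))}\lesssim M$, $\|\pi\|_{L^2_tL^{3/2}_x}\le M$, together with $\|u\|_{L^2_tL^3(\mathrm{annulus})}\lesssim\big(\|u\|_{L^\infty_tL^2}\|u\|_{L^2_tH^1}\big)^{1/2}\tau^{1/4}\lesssim M^{1/2}\tau^{1/4}$, each is bounded by a fixed power of $M$ times a positive power of $\tau$, the worst being $\lesssim M^{3/2}\tau^{1/4}$. Hence, after Gronwall/absorption,
\begin{equation*}
E(\tau):=\sup_{t\le\tau}\|u(t)\|_{L^2(B_{5/8})}^2+\int_0^\tau\!\!\int_{B_{5/8}}|\nabla u|^2\ \lesssim\ \e_0+M^{3/2}\tau^{1/4},
\end{equation*}
and requiring $M^{3/2}\tau^{1/4}\lesssim1$ forces the choice $T_1(M)=\e(1+M)^{-6}$, which is the origin of the exponent $6$; on $(0,T_1)$ one gets $E(\tau)$ as small as desired, with a decay rate as $\tau\to0$.

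\textbf{Step 3 ($\varepsilon$-regularity and conclusion).} Recall the $\varepsilon$-regularity criterion: there is $\varepsilon_*>0$ such that if $(w,q)$ is a suitable weak solution on a parabolic cylinder $Q_{z_0,\rho}$ with $\rho^{-2}\int_{Q_{z_0,\rho}}\big(|w|^3+|q-\overline q|^{3/2}\big)\le\varepsilon_*$, then $w$ is Hölder continuous on $Q_{z_0,\rho/2}$ with $|w|\le C\rho^{-1}\varepsilon_*^{1/3}$ there. Applying this to $(v,\pi)$ at $z_0=(x_0,t_0)\in B_{1/4}\times(0,T_1)$ at scale $\rho\simeq\sqrt{t_0}$ and decomposing $v=V+u$: the $V$-contribution is $\rho^{-2}\int_{Q_{z_0,\rho}}|V|^3\le\rho^{-2}\int_0^{\min(t_0,\rho^2)}\|V(t)\|_{L^3(\RR^3)}^3\,dt\le\e_0^3$, the pressure is controlled by the hypothesized $\|\pi\|_{L^2_tL^{3/2}_x}\le M$ (plus the tiny $\pi_V$), and the $u$-contribution must be shown to be scale-invariantly small near $t=0$. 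Since the raw energy $E$ is \emph{not} scale invariant, the naive interpolation bound on $\rho^{-2}\int_{Q_{z_0,\rho}}|u|^3$ does not suffice; one needs in addition a near-critical control such as $\sup_{t\le T_1}\|u(t)\|_{L^3(B_{5/8})}\le\delta$ (small), obtained either by a Duhamel/fixed-point bootstrap for $u$ exploiting $u(\cdot,0)=0$, or by an induction on parabolic scales in the spirit of localized smoothing, and then $\rho^{-2}\int_{Q_{z_0,\rho}}|u|^3\le\delta^3$. Granting this, $\varepsilon$-regularity gives that $v$ is regular on $B_{1/4}\times(0,T_1)$ and, at the scale $\rho\simeq\sqrt{t_0}$, yields \eqref{th1.1-2}. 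Finally \eqref{th1.1-1} follows from the same splitting $v=V+u$: for every $z_0\in B_{1/4}\times(0,T_1)$ and every $r>0$, the $V$-part contributes $\le\e_0^3$ (as above), the $u$-part contributes a quantity $\le\delta^3$ for $r\lesssim\sqrt{t_0}$ and, for $r\gtrsim\sqrt{t_0}$, is controlled by the (bounded-measure, clipped) cylinder together with the now-established pointwise bound and the global a priori bounds; choosing $\e_0$, hence $\delta$, small makes the total $\le1$.

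\textbf{Main obstacle.} The essential difficulty is Steps 2--3 in tandem: proving the local energy estimate with the \emph{sharp} $M$-dependence (so that $T_1=\e(1+M)^{-6}$), and then converting the resulting smallness of the remainder's energy — which is not a scale-invariant quantity — into the scale-invariant $L^3$ (Morrey) smallness required to run $\varepsilon$-regularity \emph{uniformly over all base points $z_0\in B_{1/4}\times(0,T_1)$ and all scales $r\in(0,\infty)$}. A single application of $\varepsilon$-regularity at scale $\sqrt{t_0}$ is not enough, because the remainder carries an unfavourable power of the scale near the initial time; this forces the near-critical control of $u$ described above (or an iteration over parabolic scales). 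Closely tied to this is the non-local pressure: it is only controlled through the hypothesized $L^2_tL^{3/2}_x$ bound, whose subcritical scaling is precisely what both produces the exponent $6$ and must be handled, via the mean-zero identity $\int u\cdot\nabla\phi^2=0$, so as not to obstruct the regularity argument. The remaining ingredients — construction and properties of $V$, the structure of the local energy inequality for $u$, and the bookkeeping of the flux terms — are standard.
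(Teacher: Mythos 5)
Your high-level strategy — compare $v$ with a small mild solution $V$ built from a localized piece of the initial data, then feed the smallness of the remainder into CKN $\varepsilon$-regularity at scale $\sqrt{t_0}$ — matches the paper's approach (the paper takes $a_0=\Phi v_0$ via a Bogovskii localization and $a$ via Kato's $L^3$ small-data theory, then studies $b=v-a$, which solves an $a$-perturbed Navier--Stokes system). The time exponent $6$ arises in the paper exactly as you describe, from $\norm{v}_{L^3(Q)}\lesssim\sqrt M\,T^{1/12}$ and $\norm{\pi}_{L^{3/2}(Q)}\lesssim M\,T^{1/6}$, so both $\int|b|^3$ and $\int|\pi_b|^{3/2}$ are $\lesssim M^{3/2}T^{1/4}$, giving $T\lesssim M^{-6}$.

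However, there is a genuine gap at precisely the point you flag in Step~3. You correctly observe that $L^3_{t,x}\times L^{3/2}_{t,x}$ smallness of $(b,\pi_b)$ on the whole cylinder $B_{3/4}\times(0,T_1)$ is not enough to run $\varepsilon$-regularity at the scale $\rho\simeq\sqrt{t_0}$ for $t_0$ small, because one must control $\rho^{-2}\int_{Q_\rho}|b|^3$ and $\rho^{-2}\int_{Q_\rho}|\pi_b-\overline{\pi_b}|^{3/2}$ \emph{uniformly over all small $\rho$}, not just for $\rho$ comparable to the full cylinder size. You then say \emph{``Granting this''} and propose two possible routes (a near-critical $L^\infty_tL^3_x$ bound for $u$ via Duhamel, or an iteration over parabolic scales), but you carry out neither. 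This is the heart of the theorem and exactly where the paper's technical work lives: it proves a one-scale decay estimate for $\varphi(u,p,\theta r)$ by a compactness/contradiction argument (Lemma~\ref{decay}), whose key engine is the higher $L^q$-integrability result for the \emph{perturbed} Stokes system (Proposition~\ref{linear-pns}, proved via localization, a bootstrap through potential estimates, and a time-slicing argument), and then iterates over dyadic parabolic scales (Lemma~\ref{criterion-PNS}) to obtain the Morrey-type bound $\sup_{z_0,\,r<1/4}\,r^{-2-3\beta}\int_{Q_r(z_0)}|b|^3+|\pi_b-(\pi_b)_{B_r}(t)|^{3/2}\le C$. That Morrey decay — not an $L^\infty_tL^3_x$ bound — is what supplies scale-invariant smallness of the $b$-contribution for $r$ small, and it has no analogue in your proposal beyond the acknowledgement that it is needed.

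Two smaller remarks. First, your Gronwall/local-energy Step~2 is not wrong, but it is not how the paper reaches smallness of $\int|b|^3+|\pi_b|^{3/2}$: the paper simply interpolates from the hypothesized $L^\infty_tL^2_x\cap L^2_tH^1_x$ and $L^2_tL^{3/2}_x$ bounds using the short time $T$; no Gronwall is needed, and the $\e_0$ term you write in $E(\tau)$ should not appear since $u(0)=0$ on $B_{3/4}$. Second, for \eqref{th1.1-1} the paper also has to handle all $r>0$ by splitting at $r_1$ (an absolute constant with $Cr_1^{3\beta}\le\frac12\e_{\CKN}$) and taking $T\le 4r_1^2$, so that large $r$ are treated by the crude bound $\int_{Q\cap B_1\times(0,T)}|b|^3\le C\e$; your ``clipped cylinder'' remark is in the right spirit, but the explicit choice of $r_1$ and the adjustment of $T$ are needed to actually close that case.
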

Above, we define the parabolic cylinder by
$Q_{z_0,r} = B_r(x_0) \times (t_0-r^2,t_0)$ for 
$z_0 = (x_0,t_0)$.

\noindent
{\it Comments for Theorem \ref{th1.1}:}

\begin{enumerate}
\item Our result holds for locally (in space) defined suitable weak solutions. In particular, no boundary condition is assumed on 
$\pd B_1 \times (0,T_1)$.

\item The quantities bounded by $M$ in \eqref{th1.1-0} both have dimension 1 in the sense of \cite{CKN}. This is convenient for the tracking of constants in Corollary \ref{cor1.2}.

\item It should be noted that the constant $C_1$ is independent of $M$. Intuitively, the nonlinear term has no effect before $T_1=T_1(M)$, and hence the solution behaves like a linear solution, and its size is given by the initial data.

\item The boundedness of $\pi$ in $L^{2}_t L^{3/2}_x$ is natural for the Leray-Hopf weak solutions defined in $\R^3$, 
as $\pi$ is given by $\pi=R_iR_j(v_iv_j)$, where $R_j = (-\De)^{-1/2} \pd_j$ is the Riesz transform, and
\[
\norm{\pi }_{ L^{2}_t L^{3/2}_x(\R^3 \times (0,T))}\le C \norm{v }_{ L^{4}_t L^{3}_x(\R^3 \times (0,T))}^2
\le C \norm{v}_{L^\infty L^2 \cap L^2 \dot H^1(\R^3 \times (0,T))}^2.
\]
We will prove the local-in-space pressure bound for local energy solutions in Lemma \ref{apriori-bounds}.

\item
The assumption  $\norm{\pi }_{ L^{2}_t L^{3/2}_x(B_1 \times (0,T_1))}\le M$ can be replaced by, e.g.,~$\norm{\pi}_{L^{q}(Q)}
\le M$ for some $q\in(3/2,5/3]$.  It ensures that $\int_0^T \int_{B_1} |v|^3+|p|^{3/2}$ is small for sufficiently small $T=T(M)$; thus $q=3/2$ is not allowed.
Our choice of exponents is to maximize the time exponent, so that $T_1(M)=\e (1+M)^{-m}$ has the smallest $m=6$.

\item Theorem \ref{th1.1} is an extension of Jia-Sverak \cite[Theorem 3.1]{JiaSverak}, in which the initial data is assumed in $L^m(B_1)$, $m>3$. This is similar to the extension of the mild solution theory for the scale subcritical data $v_0\in L^m(\R^3)$, $m>3$, of
Fabes-Jones-Rivi{\`e}re \cite{FJR}  to the critical data
$v_0\in L^3(\R^3)$ of Weissler \cite{Weissler}, Giga-Miyakawa \cite{GM}, Kato \cite{Kato84} and Giga \cite{Giga}.
\end{enumerate}

Our first set of applications (Corollaries \ref{cor1.2}-\ref{cor1.4}) of Theorem \ref{th1.1} is concerned with   \emph{local energy solutions} defined globally in $\R^3$, which are weak solutions
of \eqref{NS} in $\R^3 \times (0,\infty)$ that are uniformly in time bounded in $L^2_\uloc$, and satisfying the local energy inequality. 
See Section \ref{S3} for their definitions and properties. 
In order to state our result, we introduce the uniformly local $L^q$ spaces.
For $q \in [1,\infty)$, we say $f\in L^q_{\uloc}$ if $f\in L^q_\loc(\R^3)$ and
\EQ{\label{Lq-uloc}
\norm{f}_{L^q_{\uloc}} =\sup_{x \in\R^3} \norm{f}_{L^q(B_1(x))}<\infty.
}
We also denote for $\rho>0$
\[
\norm{f}_{L^q_{\uloc,\rho}} =\sup_{x \in\R^3} \norm{f}_{L^q(B_{\rho}(x))}.
\]
Let $E^q$ be the closure of $C^\infty_c(\R^3)$ in $L^q_{\uloc}$-norm. Equivalently, $E^q$ consists of those $f \in L^q_{\uloc}$ with $\lim_{|x|\to \infty} \norm{f}_{L^q(B_1(x))} = 0$, see \cite{LR}.

\medskip

In the following corollary we assume that the initial data  belongs to $L^3(B_{\delta}) \cap E^2$.

\begin{corollary}\label{cor1.2}
Let $\epsilon_0$ and $C_1$ be the constants from Theorem \ref{th1.1}.
Suppose $v$ is a local Leray solution of the Navier-Stokes equations \eqref{NS}
with initial data $v_0\in E^2$ and there is $\de \in (0,1]$ such that
\begin{equation}
\norm{v_0}_{L^3(B_\de)}\le \epsilon_0.
\end{equation}
Then, there exist $T_2=T_2(\de,\norm{v_0}_{L^2_{\uloc}})>0$, such that
$v$ is regular in $B_{\de/4}  \times (0,T_2)$ with
\[
|v(x,t)| \le \frac {C_1}{\sqrt t}, \quad \text{in }\ B_{\de/4} \times (0,T_2),
\]
and
\[\sup_{z_0  \in B_{\de/4}\times (0,T_2)} \sup_{0<r<\infty}\frac{1}{r^{2}}\int_{Q_{z_0,r} \cap [B_{\de/4}  \times (0,T_2)]}\abs{v}^3
dz\le 1.
\]
Furthermore, we can take
$T_2=T_1(M)\de^2$ with
$M = \frac C\de \sup_{x_0}\int_{B_\de(x_0)}|v_0|^2$.
\end{corollary}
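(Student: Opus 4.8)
The plan is to deduce Corollary \ref{cor1.2} from Theorem \ref{th1.1} by rescaling to unit spatial scale. Fix $\delta\in(0,1]$ and set
\[
w(x,t)=\delta\, v(\delta x,\delta^2 t),\qquad \pi_w(x,t)=\delta^2\,\pi(\delta x,\delta^2 t),
\]
where $\pi$ is the pressure carried by the local Leray solution $v$. The class of local Leray solutions on $\R^3\times(0,\infty)$ is invariant under the Navier-Stokes scaling, and $\delta\le 1$ preserves both the inclusion $B_\delta\subset B_1$ and the $E^2$ decay at spatial infinity; hence $(w,\pi_w)$ is again a local Leray solution, with initial data $w_0(x)=\delta v_0(\delta x)\in E^2$, and its restriction to $B_1\times(0,T)$ is a suitable weak solution for any $T>0$. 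Scale invariance of the $L^3$ norm gives $\norm{w_0}_{L^3(B_1)}=\norm{v_0}_{L^3(B_\delta)}\le\epsilon_0$, so the hypothesis \eqref{th1.1-0a} of Theorem \ref{th1.1} holds for $w_0$.

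It remains to check the energy-pressure bound \eqref{th1.1-0} for $w$. A change of variables gives $\norm{w_0}_{L^2_{\uloc}}^2=\delta^{-1}\sup_{x_0}\int_{B_\delta(x_0)}|v_0|^2=:A$, so that $M:=CA$ is exactly the constant claimed in the corollary. Since $w$ is a local Leray solution with data $w_0$, the a priori estimates for local Leray solutions (Lemma \ref{apriori-bounds}) furnish an absolute constant $C$ and a time $T^*=T^*(A)>0$ such that
\[
\norm{w}_{L^\infty_tL^2_x\cap L^2_tH^1_x(B_1\times(0,T^*))}^2+\norm{\pi_w}_{L^2_tL^{3/2}_x(B_1\times(0,T^*))}\le M.
\]
A short computation comparing $T_1(M)=\epsilon(1+M)^{-6}$ with the admissible time from Lemma \ref{apriori-bounds} shows that, after shrinking $\epsilon$ (independently of $M$) if necessary, $T_1(M)\le T^*$; so the bound above still holds with $T^*$ replaced by $T_1:=T_1(M)$, which is precisely \eqref{th1.1-0}.

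Theorem \ref{th1.1} now applies to $(w,\pi_w)$ on $B_1\times(0,T_1)$ and yields regularity of $w$ on $B_{1/4}\times(0,T_1)$ with $|w(x,t)|\le C_1/\sqrt t$ and the normalized bound \eqref{th1.1-1}. Undoing the scaling via $v(y,s)=\delta^{-1}w(y/\delta,s/\delta^2)$ shows that $v$ is regular on $B_{\delta/4}\times(0,T_2)$ with $T_2:=\delta^2 T_1(M)$, and that $|v(y,s)|=\delta^{-1}|w(y/\delta,s/\delta^2)|\le\delta^{-1}C_1(s/\delta^2)^{-1/2}=C_1/\sqrt s$. Finally, the Morrey-type quantity $r^{-2}\int_{Q_{z_0,r}}|v|^3\,dz$ is invariant under the parabolic rescaling, and $B_1$ at unit scale corresponds to $B_\delta\supset B_{\delta/4}$; hence the estimate \eqref{th1.1-1} for $w$ over $Q\cap[B_1\times(0,T_1)]$ translates into the asserted bound for $v$ over the smaller set $Q\cap[B_{\delta/4}\times(0,T_2)]$. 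I expect no genuine obstacle here beyond bookkeeping of scaling exponents; the one ingredient with real content is Lemma \ref{apriori-bounds}, which delivers both the local energy bound and the local pressure bound for local Leray solutions, together with an admissible time $T^*(A)$ large enough to dominate $T_1(M)$.
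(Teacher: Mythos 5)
Your proof follows essentially the same route as the paper: rescale by $\delta$, verify the smallness of the $L^3$ norm on $B_1$ by scale invariance, invoke Lemma \ref{apriori-bounds} for the local energy and pressure bounds, compare $T_1(M)$ with $\sigma$, apply Theorem \ref{th1.1}, and undo the scaling. One technical step you gloss over: Lemma \ref{apriori-bounds} bounds $\pi - c_{x_0,r}(t)$ rather than $\pi$ itself, so one must first replace the rescaled pressure $\pi_w$ by $\pi_w - c_{0,1}(t)$ (which does not affect the equation or the notion of suitable weak solution) before hypothesis \eqref{th1.1-0} of Theorem \ref{th1.1} is actually met; the paper makes this substitution explicit. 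With that one-line fix your argument matches the paper's.
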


\medskip

In Corollary \ref{cor1.3} we assume the initial data  $v_0 \in L^3_{\uloc}(\R^3)\cap E^2$.

\begin{corollary}\label{cor1.3}
Let $\e_0$ be the small constant from Theorem \ref{th1.1}.
Suppose $v$ is a local Leray solution of the Navier-Stokes equations \eqref{NS}
with initial data $v_0\in L^3_\uloc\cap E^2$ and there is $\de \in (0,\infty)$ such that
\begin{equation}\label{cor1.3-1}
\sup_{  x_0 \in \R^3}\int_{B_{\delta}(x_0)}\abs{v_0}^3\le
\epsilon_0^3.
\end{equation}
Then, there is $T>0$ such that $v$ is regular in $\R^3 \times (0,T)$ with
\begin{equation}\label{cor1.3-2}
|v(x,t)| \le \frac C{\sqrt t}, \quad (0<t<T).
\end{equation}
\end{corollary}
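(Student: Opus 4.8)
The plan is to deduce Corollary \ref{cor1.3} from Corollary \ref{cor1.2} applied simultaneously around every point of $\R^3$, using the translation invariance of \eqref{NS} and of the class of local Leray solutions. First I would note that we may assume $\de\le1$: if $\de>1$, then for every $x_0$ we have $\int_{B_1(x_0)}|v_0|^3\le\int_{B_\de(x_0)}|v_0|^3\le\e_0^3$, so hypothesis \eqref{cor1.3-1} continues to hold with $\de$ replaced by $1$. So fix an arbitrary $x_0\in\R^3$ and set $v^{x_0}(x,t)=v(x+x_0,t)$, $\pi^{x_0}(x,t)=\pi(x+x_0,t)$. Since \eqref{NS}, the local energy inequality, the uniform $L^2_\uloc$ bound, and membership of the data in $E^2$ (the closure of $C^\infty_c$ in the translation-invariant $L^2_\uloc$ norm) are all preserved under spatial translation, $v^{x_0}$ is again a local Leray solution, with initial data $v_0(\cdot+x_0)\in E^2$.

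By \eqref{cor1.3-1} we have
\[
\norm{v_0(\cdot+x_0)}_{L^3(B_\de)}=\norm{v_0}_{L^3(B_\de(x_0))}\le\e_0,
\]
so Corollary \ref{cor1.2} applies to $v^{x_0}$ at scale $\de$ and yields that $v^{x_0}$ is regular in $B_{\de/4}\times(0,T_2)$ with $|v^{x_0}(x,t)|\le C_1/\sqrt t$ there, where $T_2=T_1(M)\de^2$ and
\[
M=\tfrac{C}{\de}\sup_{y}\int_{B_\de(y)}|v_0(\cdot+x_0)|^2=\tfrac{C}{\de}\sup_{y}\int_{B_\de(y)}|v_0|^2 .
\]
The key observation is that this $M$, and hence $T_2$, is \emph{independent of $x_0$}: the supremum over centers $y$ already removes the base point. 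Moreover $M$ is finite, since $v_0\in E^2\subset L^2_\uloc$ and $\de\le1$ give $M\le\tfrac{C}{\de}\norm{v_0}_{L^2_\uloc}^2<\infty$.

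Translating back, $v$ is regular in $B_{\de/4}(x_0)\times(0,T_2)$ with $|v(x,t)|\le C_1/\sqrt t$ for this single $x_0$. Since $x_0$ was arbitrary, $\bigcup_{x_0\in\R^3}B_{\de/4}(x_0)=\R^3$, and regularity ($v\in L^\infty_\loc$) is a local property that glues trivially over overlapping cylinders, we conclude that $v$ is regular in $\R^3\times(0,T)$ with $T:=T_2$ and $|v(x,t)|\le C/\sqrt t$ for $0<t<T$, where $C=C_1$, which is \eqref{cor1.3-2}. I do not anticipate a genuine obstacle: the only steps needing a line of care are the reduction to $\de\le1$ and the verification that the constant $M$ of Corollary \ref{cor1.2} is genuinely uniform in the base point, which holds precisely because $M$ is itself defined through a supremum over all $\de$-scale centers.
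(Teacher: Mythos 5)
Your proposal is correct and follows essentially the same route as the paper's own (very terse) proof: reduce to $\de\le 1$, apply Corollary~\ref{cor1.2} around every center $x_0$ via translation invariance, and note that the time of regularity $T_2=T_1(M)\de^2$ is uniform in $x_0$ because $M$ is already defined by a supremum over all centers. You have simply spelled out the translation and gluing steps that the paper leaves implicit in the phrase ``direct consequence of Corollary~\ref{cor1.2}, since smallness\ldots is uniform in $x_0$.''
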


This result is similar to Maekawa-Terasawa \cite[Theorem 1.1 (iii)]{MaTe}.
Indeed, \cite{MaTe} constructs local in time mild solutions in the intersection of $L^\infty(0,T; L^3_\uloc)$ and \eqref{cor1.3-2},
for $v_0\in L^3_\uloc$ that satisfies the smallness condition \eqref{cor1.3-1}.
They have $T= C\de^2 \norm{u}_{L^3_{\uloc,\de}}^{-4}$, and do not assume spatial decay $v_0\in E^2$.

In constrast, Corollary \ref{cor1.3} is a regularity theorem, assuming further the spatial decay of $v_0$.

In Corollary \ref{cor1.4} we consider general initial data  $v_0 \in E^2$. Let
\[
 \rho(x;v_0) =\sup \bket{ r>0: v_0 \in L^3(B_r(x)), \int_{B_r(x)} |v_0|^3 \le \e_0^3}.
\]
We let $\rho(x;v_0)=0$ if such $r$ does not exist.

\begin{corollary}
\label{cor1.4} Suppose $v_0\in E^2$ and $\div v_0=0$.  Let $\bar \rho(x)
= \min(\rho(x;v_0),1)\ge0$, and $N_r = \sup_{x_0\in \RR^3} \frac 1r \int_{B_r(x_0)}  {|v_0|^2}\,dx$. Let
\[
T(x) = \e (1+N_{\bar \rho(x)})^{-6} \bar \rho(x)^2 \ge 0,
\]
where the constant $\e>0$ is sufficiently small.
Then, any local Leray solution $v$ of the Navier-Stokes
equations \eqref{NS} with initial data $v_0$ is regular in the
region
\[
\Om = \bket{(x,t): \ x \in \R^3, \ 0< t < T(x)},
\]
and
\[
|v(x,t)| \le \frac {C_1}{\sqrt t}\quad \text{in }\Om.
\]
\end{corollary}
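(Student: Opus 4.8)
The plan is to obtain Corollary~\ref{cor1.4} as a pointwise, point-by-point version of Corollary~\ref{cor1.2}: at each center $x$ one uses the radius $\bar\rho(x)$ as the $\de$ in Corollary~\ref{cor1.2} and reads off regularity on a small space-time box sitting over $(x,0)$. Fix $x_0 \in \R^3$. If $\bar\rho(x_0)=0$ then $T(x_0)=0$ and the fiber $\{x_0\}\times(0,T(x_0))$ is empty, so there is nothing to prove there; hence assume $\bar\rho(x_0)>0$ and set $\de := \bar\rho(x_0) \in (0,1]$. First I would verify $\norm{v_0}_{L^3(B_\de(x_0))}\le \e_0$. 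By definition of $\rho(x_0;v_0)$ the set $S$ of admissible radii is nonempty and, since $r\mapsto \int_{B_r(x_0)}\abs{v_0}^3$ is nondecreasing, $S$ contains every $r<\rho(x_0;v_0)$; as $\de \le \rho(x_0;v_0)$, monotone convergence gives $v_0\in L^3(B_\de(x_0))$ with $\int_{B_\de(x_0)}\abs{v_0}^3 \le \e_0^3$, whether or not the supremum defining $\rho(x_0;v_0)$ is attained.

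Next I would apply Corollary~\ref{cor1.2} — stated for balls centered at the origin but translation invariant — to the given local Leray solution $v$ with radius $\de$ and center $x_0$. Since $v_0 \in L^3(B_\de(x_0))\cap E^2$ with $\norm{v_0}_{L^3(B_\de(x_0))}\le \e_0$, the corollary produces $T_2 = T_1(M)\de^2$ with $M = \frac{C}{\de}\sup_{y}\int_{B_\de(y)}\abs{v_0}^2 = C\,N_\de$ (in the notation of Corollary~\ref{cor1.4}), such that $v$ is regular in $B_{\de/4}(x_0)\times(0,T_2)$ and $\abs{v(x,t)}\le C_1/\sqrt t$ there. By the last assertion of Theorem~\ref{th1.1}, $T_1(M) = \e_*(1+M)^{-6}$ for a fixed small $\e_*$, so $T_2 = \e_*(1+C N_\de)^{-6}\de^2$.

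It then remains to compare this with $T(x_0) = \e(1+N_\de)^{-6}\de^2$. Assuming $C\ge 1$ (enlarge $C$ if necessary), $1+CN_\de \le C(1+N_\de)$, hence $T_2 \ge \e_* C^{-6}(1+N_\de)^{-6}\de^2$; thus if the constant $\e$ in the statement is taken with $\e \le \e_* C^{-6}$ one gets $T(x_0)\le T_2$. Consequently, for every $x_0$ with $\bar\rho(x_0)>0$, $v$ is regular in the open box $B_{\bar\rho(x_0)/4}(x_0)\times(0,T(x_0))$ with $\abs{v(x,t)}\le C_1/\sqrt t$. Finally, any point $(x_0,t_0)\in\Om$ satisfies $t_0<T(x_0)$ and hence lies in this box, so $v$ is regular at $(x_0,t_0)$ and obeys the stated bound; since $(x_0,t_0)\in\Om$ was arbitrary, $v$ is regular throughout $\Om$ with $\abs{v(x,t)}\le C_1/\sqrt t$ in $\Om$.

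I expect no essential obstacle: all the analysis is carried by Theorem~\ref{th1.1} and Corollary~\ref{cor1.2}, and the argument is just a localization at every point. The two places needing care are (i) that the $L^3$-smallness survives at the endpoint radius $\bar\rho(x_0)$ — the monotone-convergence step in the first paragraph — and (ii) the bookkeeping of universal constants so that the single small constant $\e$ in the definition of $T(x)$ is compatible with the time $T_2$ delivered by Corollary~\ref{cor1.2}.
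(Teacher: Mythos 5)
Your proposal is correct and follows essentially the same route as the paper: fix $x_0$, take $\de=\bar\rho(x_0)\in(0,1]$, apply (a translate of) Corollary~\ref{cor1.2} at $x_0$, and read off the bound on $B_{\de/4}(x_0)\times(0,T(x_0))$. You simply supply two details the paper leaves implicit — the monotone-convergence argument that the $L^3$-smallness survives at the endpoint radius $\bar\rho(x_0)$, and the absorption of the constant in $M=CN_\de$ into the small constant $\e$ — both of which are correct and in the spirit of the paper's terse proof.
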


Of course this corollary is interesting only near those $x$ with $\rho(x;v_0)>0$.
It is a consequence of Corollary \ref{cor1.2}.

\bigskip

Our second set of applications is for solutions with initial data in the Herz spaces. These spaces contain self-similar and DSS solutions, and are of particular interest to the study of DSS solutions since they are weighted spaces with a particular choice of centre.
We now recall the definitions and basic properties of Herz spaces \cite{Herz,Miyachi,Tsutsui}. Let $A_k = \{ x \in \R^n: 2^{k-1} \le |x|< 2^k\}$.
For $n \in \NN$, $s \in \R$ and $p,q\in (0,\infty]$, the \emph{homogeneous Herz space} $\dt K^{s}_{p,q}(\R^n)$ is the space of functions $f \in L^p_{\loc}(\R^n \setminus \{0\})$ with finite norm
\[
\norm{f}_{\dt K^{s}_{p,q}} =
\left \{
\begin{alignedat}{3}
&  \bigg( \sum_{k \in \ZZ}  2 ^{ksq} \norm{f}_{L^p(A_k)}^q \bigg)^{1/q}  \qquad &\text{if } q<\I,
 \\
&  \sup _{k \in \ZZ} 2^{ks} \norm{f}_{L^p(A_k)}  \qquad &\text{if } q=\I.
\end{alignedat}
\right .
\]
The \emph{weak Herz space} $W\!\dt K^{s}_{p,q}(\R^n)$ are defined similarly, with $L^p(A_k)$-norm in the definition replaced by its weak version, $L^{p,\infty}(A_k)$-norm.

In what follows we take $q=\infty$, which is most suitable for our purpose.
In this case, $\dt K^{s}_{p,\infty}$-norm is equivalent to
\[
\norm{f}_{s,p}=
\sup_{x \not =0} \bket{ |x|^s\cdot \norm{f}_{L^p(B_{\frac{|x|}{2}}(x))} }.
\]
Also note $\dt K^{s}_{p,\I} \subset \dt B^{-s}_{p,\infty}$ if $1<p<\I$ and $0<s<n(1-1/p)$, see \cite[Theorem 1.6 (ii)]{Tsutsui} .

For $n=3$, let
\[
K_p:= \dt K^{1-3/p}_{p,\infty},\qquad p\ge 3.
\]
It is invariant under the scaling $f(x) \to \la f(\la x)$, i.e., \eqref{v0scaling}, the natural scaling of stationary \eqref{NS} and the following relation holds
\[
K_p \subset \dt B^{3/p-1}_{p,\infty}\subset BMO^{-1} \quad (3<p<\infty).
\]
The space $K_p$ contains those DSS %
in $ L^p_{\loc}(\R^3 \setminus \{0\})$, and thus $K_3$ contains all initial data considered in \cite{BT1,BT2}.

We are interested in the Herz spaces because they seem to be natural spaces for DSS solutions of \eqref{NS}.
The existence problem of mild solutions of \eqref{NS} in the Herz spaces has been studied extensively by Tsutsui \cite{Tsutsui}.
He proves short time existence for large data in subcritical weak Herz spaces $W\!\dt K^{s}_{p,\infty}(\R^3)$,
$0 \le s < 1-3/p$, and global existence for small data in the critical  weak Herz space
$W\!\dt K^{0}_{3,\infty}(\R^3)$.

\begin{theorem}\label{main-thm}
Let $\epsilon_0$ and $C_1$ be the constants from Theorem \ref{th1.1}.
Let $v$ be a local Leray solution of the Navier-Stokes equations \eqref{NS}
with initial data $v_0\in K_3 \cap E^2$. Assume further that there is
$\mu \in(0,1)$ such that
\begin{equation}\label{th1.3-1}
\sup_{ 0\not = x \in \R^3}\int_{B_{\mu\abs{x}}(x)}\abs{v_0}^3\le
\epsilon_0^3.
\end{equation}
Then there exist
 $\sigma_1=\sigma_1(\norm{v_0}_{K_3})>0$,
 $C_2=C_2(\norm{v_0}_{K_3})$ and
$\sigma_2=\sigma_2(\mu, \norm{v_0}_{K_3}) \in (0,\sigma_1]$,
such that, for any $R>0$,
\begin{equation}\label{th1.3-2}
\sup_x \frac{1}{R}\int_{B_R(x)}\abs{v}^2dx+\sup_x \frac{1}{R}\int_0^{\sigma_1
R^2}\int_{B_R(x)}\abs{\nabla v}^2 dxdt\le C_2,
\end{equation}
and
\begin{equation}\label{th1.3-4}
\abs{v(x,t)}\le \frac{C_1}{\sqrt{t}},\quad \text{for}\quad 0<t<\sigma_2\abs{x}^2.
\end{equation}
\end{theorem}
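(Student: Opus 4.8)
The plan is to deduce Theorem \ref{main-thm} from Corollary \ref{cor1.2} by exploiting the scaling structure of the Herz space $K_3$ together with a covering argument adapted to the self-similar geometry (annuli centered at the origin). First I would record the two quantitative consequences of the hypotheses: the membership $v_0 \in K_3$ gives, via the equivalent norm $\norm{v_0}_{1-3/p,p}$ with $p=3$, a bound
\[
\sup_{0\neq x} \norm{v_0}_{L^3(B_{|x|/2}(x))} \le \norm{v_0}_{K_3},
\]
and the smallness hypothesis \eqref{th1.3-1} gives $\norm{v_0}_{L^3(B_{\mu|x|}(x))} \le \epsilon_0$ for every $x\neq 0$. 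Combined with $v_0 \in E^2$, this says that at scale comparable to $|x|$ the local $L^3$ norm of $v_0$ near $x$ is small, which is precisely the structural hypothesis of Corollary \ref{cor1.2} after rescaling. I would also need to control $\norm{v_0}_{L^2_\uloc}$-type quantities in terms of $\norm{v_0}_{K_3}$: on the annulus $A_k$ one has, by Hölder, $\norm{v_0}_{L^2(A_k)} \lesssim 2^{3k/6}\norm{v_0}_{L^3(A_k)} \lesssim 2^{k/2}\cdot 2^{-k(1-1)}\norm{v_0}_{K_3} $, so that the scale-$2^k$ local energy $\frac{1}{2^k}\int_{B_{2^k}(x)}|v_0|^2$ is bounded uniformly by $C\norm{v_0}_{K_3}^2$ for $x \in A_k$; this is the quantity that will play the role of $M$.

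Next I would set up the covering. Fix $x \neq 0$ and let $\delta = \delta(x) \sim \mu|x|$ be the radius at which the smallness of $v_0$ holds at $x$; more precisely, for each $x\neq 0$ take $\bar\rho(x) = c\,\mu|x|$ with $c$ small enough (depending only on absolute constants) that $B_{\bar\rho(x)}(x) \subset B_{\mu|x|}(x)$, ensuring $\norm{v_0}_{L^3(B_{\bar\rho(x)}(x))} \le \epsilon_0$. By Corollary \ref{cor1.2} applied with this $\delta = \bar\rho(x)$ and with $M = M(x) = \frac{C}{\bar\rho(x)}\sup_{x_0}\int_{B_{\bar\rho(x)}(x_0)}|v_0|^2 \le C'\norm{v_0}_{K_3}^2 =: M_0$ (uniform in $x$, using the annulus estimate above — note $\bar\rho(x) \sim \mu|x|$ so $B_{\bar\rho(x)}(x_0)$ for $x_0$ near $x$ stays in a fixed dilate of $A_k$), we get that $v$ is regular on $B_{\bar\rho(x)/4}(x) \times (0, T_1(M_0)\bar\rho(x)^2)$ with $|v(y,t)| \le C_1/\sqrt t$ there. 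Since $\bar\rho(x)^2 \sim \mu^2|x|^2$ and $T_1(M_0) \sim (1+M_0)^{-6} \sim (1+\norm{v_0}_{K_3}^2)^{-6}$, setting $\sigma_2 = \sigma_2(\mu,\norm{v_0}_{K_3})$ to be a small constant times $\mu^2 T_1(M_0)$ and noting that the balls $B_{\bar\rho(x)/4}(x)$ cover $\R^3\setminus\{0\}$ (every point $y\neq 0$ lies in $B_{\bar\rho(y)/4}(y)$), we obtain \eqref{th1.3-4}: for $0 < t < \sigma_2|x|^2$ we have $t < T_1(M_0)(\bar\rho(x))^2$ so the pointwise bound applies at the point $(x,t)$ itself.

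For the energy bound \eqref{th1.3-2}, I would argue directly from the definition of local Leray solution rather than from Corollary \ref{cor1.2}. A local Leray solution with $v_0\in L^2_\uloc$ satisfies, for $0<t<\sigma_1 R^2$ with $\sigma_1$ depending on $\norm{v_0}_{L^2_\uloc}$, the a priori bound
\[
\sup_x \frac1R\int_{B_R(x)}|v(t)|^2\,dx + \sup_x \frac1R\int_0^{\sigma_1 R^2}\int_{B_R(x)}|\nabla v|^2\,dx\,dt \le C\Big(\sup_x\frac1R\int_{B_R(x)}|v_0|^2\,dx + \text{(lower order)}\Big);
\]
this is the standard local energy estimate (it is the content of what is referenced before Lemma \ref{apriori-bounds} and is used to set up $M$ in Corollary \ref{cor1.2}). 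The point is then to bound the right side for $v_0 \in K_3$: for $R \ge |x|$ the ball $B_R(x)$ is covered by $O(1)$ dyadic annuli of comparable size plus a ball around the origin, and using $\frac1R\int_{B_R(x)}|v_0|^2 \lesssim \frac1R R^{3(1-2/3)}\norm{v_0}_{L^3(B_R(x))}^2 = \norm{v_0}_{L^3(B_R(x))}^2 \lesssim \norm{v_0}_{K_3}^2$ (the last step via the Herz norm, summing/taking sup over the $O(\log)$-many annuli — here one should be slightly careful and use that for annuli $A_k$ with $2^k \lesssim R$ one has $\norm{v_0}_{L^3(A_k)}\le 2^{-k(1-3/3)}\norm{v_0}_{K_3} = \norm{v_0}_{K_3}$, and there are finitely many in a bounded-ratio range... actually for $K_3$ the exponent $1-3/p = 0$ so the $L^3$ norm on each annulus is uniformly bounded and $L^3(B_R(x))$ for $R\ge c|x|$ picks up only boundedly many annuli of comparable scale, giving $\lesssim \norm{v_0}_{K_3}^2$), we conclude the right side is $\le C(\norm{v_0}_{K_3})$, giving \eqref{th1.3-2} with $C_2 = C_2(\norm{v_0}_{K_3})$ and $\sigma_1 = \sigma_1(\norm{v_0}_{L^2_\uloc})$; since $\norm{v_0}_{L^2_\uloc} \lesssim \norm{v_0}_{K_3}$ by the annulus-to-$L^2$ estimate, $\sigma_1$ can be taken to depend only on $\norm{v_0}_{K_3}$.

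The main obstacle I anticipate is the bookkeeping in the covering: one must verify that the constant $M_0$ in the application of Corollary \ref{cor1.2} is genuinely uniform in $x\neq 0$, which requires that the scale-$\bar\rho(x)$ local energy of $v_0$ around $x$ is controlled by $\norm{v_0}_{K_3}^2$ \emph{uniformly}, and this hinges on the precise interplay between the $K_3$ (i.e. $s=0$, $p=3$) scaling — where the local $L^3$ mass on each dyadic annulus is bounded but does not decay — and the Hölder step converting $L^3$ mass at scale $\sim|x|$ into $L^2$-energy-density at that scale (which introduces exactly the factor that cancels the $1/\bar\rho(x)$), so that $M_0$ is indeed a pure function of $\norm{v_0}_{K_3}$. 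A secondary point requiring care is ensuring the radius $\bar\rho(x)$ is chosen so that the \emph{supremum over $x_0$} in the formula $M = \frac{C}{\delta}\sup_{x_0}\int_{B_\delta(x_0)}|v_0|^2$ from Corollary \ref{cor1.2} — which ranges over all of $\R^3$, not just near $x$ — is still bounded by $C\norm{v_0}_{K_3}^2$ when $\delta = \bar\rho(x) \sim \mu|x|$; this is fine because for any $x_0$, $\frac{1}{\delta}\int_{B_\delta(x_0)}|v_0|^2 \le \norm{v_0}_{L^2_\uloc,\delta}^2 / \delta \cdot \delta = \ldots$ — one uses that $v_0\in K_3$ with $E^2$ decay gives $\frac1r\int_{B_r(x_0)}|v_0|^2$ uniformly bounded for all $r\le $ (something scaling like the distance of $x_0$ to the origin) and that $\delta \sim \mu|x|$; the cleanest route is to observe $\delta^{-1}\int_{B_\delta(x_0)}|v_0|^2 \lesssim \delta^{-1}\cdot\delta\cdot(\text{sup of }L^3\text{-density})^{?}$... in any event the bound is $\lesssim \norm{v_0}_{K_3}^2$ by the scale-invariance of $K_3$, and this is the one computation to execute carefully.
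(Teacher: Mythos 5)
Your proposal follows essentially the paper's approach: convert the $K_3$ norm into $R$-uniform $L^2$ local-energy bounds via Hölder on dyadic annuli (this is exactly Lemma \ref{th7.2}), then run the rescaled local regularity machinery at scale $\delta\sim\mu|x|$ to get the paraboloid estimate, and use the a priori bounds of Lemma \ref{apriori-bounds} for \eqref{th1.3-2}. Your annulus computation is correct and is precisely the content of Lemma \ref{th7.2}.

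There is, however, one concrete gap, which sits exactly where you flag ``the one computation to execute carefully'' but is not resolved by the resolution you gesture at. You invoke Corollary \ref{cor1.2} with $\delta=\bar\rho(x)\sim\mu|x|$. Corollary \ref{cor1.2} is stated only for $\delta\in(0,1]$, and this hypothesis is genuinely used in its proof: after rescaling by $\delta$, one needs $N_1=\sup_{x_0}\frac1\delta\int_{B_\delta(x_0)}|v_0|^2$ to be finite and controlled, and for general $v_0\in E^2$ this is bounded by $\frac1\delta\norm{v_0}^2_{L^2_\uloc}$ only when $\delta\le 1$. For $|x|$ large you have $\bar\rho(x)>1$, so the corollary simply does not apply as stated; and the alternative of capping $\bar\rho(x)$ at $1$ gives a cylinder $\{0<t<T_1\}$ rather than the paraboloid $\{0<t<\sigma_2|x|^2\}$, which is weaker than the claimed conclusion for $|x|$ large. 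The fix --- and what the paper actually does --- is not to cite Corollary \ref{cor1.2} but to \emph{re-run its proof}: the rescaled solution again satisfies the hypotheses of Theorem \ref{th1.1} because Lemma \ref{apriori-bounds} holds at every scale $R>0$ and because $N_R\le C\norm{v_0}^2_{K_3}$ uniformly in $R$ (your annulus estimate), so the restriction $\delta\le1$ is never needed. In short: you have all the right ingredients, and you even derive the key $R$-uniform bound on $N_R$, but the logical step ``by Corollary \ref{cor1.2}'' should be replaced by ``by the same argument as in the proof of Corollary \ref{cor1.2}, now valid for all $\delta>0$ since $N_R\lesssim\norm{v_0}_{K_3}^2$ is uniform in $R$.''
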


\noindent
{\it Comments for Theorem \ref{main-thm}:}

\begin{enumerate}
\item Estimate \eqref{th1.3-4} gives a regularity estimate for the solution below the paraboloid $t=\si_2|x|^2$, i.e., in the region bounded by $t=\si_2|x|^2$ and $t=0$.

\item Note that $K_3 \subset L^2_{\uloc}$, and \eqref{th1.3-2} is a property for all local Leray solutions, see Section \ref{S3}. However, We still need to assume $v_0 \in E^2$, since $K_3$ is not a subset of $E^2$ as the following example shows
\EQ{\label{K3notE2}
v_0(x) = \sum_{k=1}^\infty \zeta(x-2^k e_1)
}
where $\zeta$ is a smooth cut-off function supported in $B_1$ and $e_1=(1,0,0)$.

\end{enumerate}

\begin{corollary}\label{cor1.6}
Let $v$ be a local Leray solution of the Navier-Stokes equations \eqref{NS}
with initial data $v_0\in K_p$, $p>3$. Then, the same conclusion of
Theorem \ref{main-thm} is true, with the constants depending on $\norm{v_0}_{K_p}$.
\end{corollary}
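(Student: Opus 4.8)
The plan is to deduce Corollary \ref{cor1.6} from Theorem \ref{main-thm} by verifying that the hypotheses of Theorem \ref{main-thm} hold for any $v_0 \in K_p$ with $p>3$; in other words, the main task is an embedding-type argument showing $K_p \subset K_3 \cap E^2$ together with the local smallness condition \eqref{th1.3-1} (after rescaling). First I would record that on each annulus-type ball $B_{|x|/2}(x)$ one has, by H\"older's inequality, $\norm{v_0}_{L^3(B_{|x|/2}(x))} \lesssim |x|^{3(1/3 - 1/p)} \norm{v_0}_{L^p(B_{|x|/2}(x))}$, and hence
\[
|x|^{1-3/3}\norm{v_0}_{L^3(B_{|x|/2}(x))} = \norm{v_0}_{L^3(B_{|x|/2}(x))} \lesssim |x|^{1-3/p}\norm{v_0}_{L^p(B_{|x|/2}(x))},
\]
so that $\norm{v_0}_{K_3} \lesssim \norm{v_0}_{K_p}$ and $K_p \hookrightarrow K_3$ with an absolute implied constant. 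This gives the first containment.

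Next I would show $K_p \subset E^2$ for $p>3$. Since $K_p$ controls $\norm{v_0}_{L^p(B_{|x|/2}(x))} \le \norm{v_0}_{K_p}|x|^{3/p-1}$ and $3/p - 1 < 0$, on any unit ball $B_1(x_0)$ with $|x_0|$ large we can cover $B_1(x_0)$ by boundedly many balls of the form $B_{|x|/2}(x)$ with $|x| \sim |x_0|$, and by H\"older
\[
\norm{v_0}_{L^2(B_1(x_0))} \lesssim \norm{v_0}_{L^p(B_1(x_0))} \lesssim |x_0|^{3/p - 1}\norm{v_0}_{K_p} \to 0 \quad \text{as } |x_0|\to\infty,
\]
which by the characterization of $E^2$ recalled after \eqref{Lq-uloc} shows $v_0 \in E^2$. (The borderline case $p=3$ genuinely fails here, which is exactly why the hypothesis $v_0\in E^2$ is imposed separately in Theorem \ref{main-thm} and why \eqref{K3notE2} is a valid counterexample.)

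Finally I would produce the local smallness parameter $\mu$ needed for \eqref{th1.3-1}. Again by H\"older, for any $\mu\in(0,1)$,
\[
\int_{B_{\mu|x|}(x)}|v_0|^3 \le \Big(\int_{B_{\mu|x|}(x)}|v_0|^p\Big)^{3/p}\big(|B_{\mu|x|}|\big)^{1-3/p} \lesssim \mu^{3-9/p}|x|^{3-9/p}\cdot |x|^{3(3/p-1)}\norm{v_0}_{K_p}^3 = \mu^{3(1-3/p)}\norm{v_0}_{K_p}^3,
\]
using $B_{\mu|x|}(x)\subset B_{|x|/2}(x)$ for $\mu\le 1/2$ and the $K_p$ bound on $\norm{v_0}_{L^p(B_{|x|/2}(x))}$. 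Since $1-3/p>0$, choosing $\mu = \mu(\norm{v_0}_{K_p})$ small enough makes the right side at most $\e_0^3$, so \eqref{th1.3-1} holds. Then Theorem \ref{main-thm} applies with $\norm{v_0}_{K_3}$ replaced (up to the absolute constant from the first step) by $\norm{v_0}_{K_p}$, yielding constants $\sigma_1,\sigma_2,C_2$ depending on $\norm{v_0}_{K_p}$ as claimed; note $\sigma_2$'s $\mu$-dependence is now absorbed since $\mu$ itself was fixed in terms of $\norm{v_0}_{K_p}$.

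I do not expect a serious obstacle here: the whole argument is a chain of H\"older estimates on dyadic annuli, and the only subtle point is to make sure the covering of a unit ball $B_1(x_0)$ (or of $B_{\mu|x|}(x)$) by $K_p$-admissible balls $B_{|y|/2}(y)$ uses a number of balls that is bounded independently of $x_0$ — which holds because for $|x_0|\ge 2$ all points $y\in B_1(x_0)$ satisfy $|y|\sim |x_0|$ with uniform constants, so finitely many such balls of radius $\sim |x_0|/2 \gtrsim 1$ suffice; for $|x_0|\le 2$ the ball $B_1(x_0)$ is inside a fixed compact set where $v_0\in L^p_{\loc}$ already gives the bound. If one wants to be careful about the region near the origin in \eqref{th1.3-1}, note that as $x\to 0$ the quantity $\mu^{3(1-3/p)}\norm{v_0}_{K_p}^3$ bound is uniform in $|x|$, so no issue arises there either.
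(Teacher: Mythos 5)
Your proof is correct and follows essentially the same approach as the paper's one-line deduction: H\"older's inequality on the balls $B_{|x|/2}(x)$ gives $K_p \subset K_3$, and the same computation on $B_{\mu|x|}(x)$ gives the smallness condition \eqref{th1.3-1} with $\mu \sim (\e_0/\norm{v_0}_{K_p})^{p/(p-3)}$, after which Theorem \ref{main-thm} applies directly. Your additional verification that $K_p \subset E^2$ for $p>3$ is a correct and tidy observation, but it is logically redundant here since the hypothesis that $v$ is a local Leray solution (Definition \ref{def:Leray}) already builds in $v_0 \in E^2$, which is why the paper does not bother to check it.
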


This corollary is a direct consequence of Theorem \ref{main-thm} 
since $K_p \subset K_3$ and
\eqref{th1.3-1} follows for $\mu = C (\e_0/\norm{v_0}_{K^p})^{p/(p-3)}$ since
\[
\norm{v_0}_{L^3(B_{\mu\abs{x}}(x))} \le (C\mu |x|)^{1-3/p} \norm{v_0}_{L^p(B_{\mu\abs{x}}(x))} \le C \de^{1-3/p}\norm{v_0}_{K_p}.
\]

\begin{corollary}\label{cor1.7}
Let $\la>1$ and $v$ be a $\la$-DSS local Leray solution of the Navier-Stokes equations  \eqref{NS}
with $\la$-DSS initial data $v_0\in L^{3,\infty}(\R^3)$. Then $v_0\in K_3$,
\eqref{th1.3-1} holds for some $\mu>0$, and the same conclusion of Theorem
\ref{main-thm} is true.

Furthermore, there exists
$\lambda_*=\lambda_*(\mu)\in (1,2)$ such that if
$1<\lambda<\lambda_*$, then $v$ is regular at any
$(x,t)\in \R^3\times\R^+$, with
\[
\abs{v(x,t)}\le\frac{C}{\sqrt{t}}\quad   {\rm in} \ \R^3\times\R^+.
\]
\end{corollary}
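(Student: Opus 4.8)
The plan is to first establish that $\la$-DSS data in $L^{3,\infty}(\R^3)$ lands in $K_3 \cap E^2$ and satisfies the local smallness \eqref{th1.3-1}, then feed this into Theorem \ref{main-thm} to get regularity below a paraboloid, and finally exploit the DSS structure to upgrade this to full regularity when $\la$ is close to $1$. For the first part, I would use the equivalence recorded in the excerpt (from \cite{BT1}): for $\la$-DSS divergence-free $v_0$, membership in $L^{3,\infty}(\R^3)$ is equivalent to $v_0 \in L^3(B_\la \setminus B_1)$. Covering the annulus $B_{|x|/2}(x)$ at scale $|x|$ by finitely many images under the DSS scaling of a fixed compact piece of $B_\la \setminus B_1$, and using $\|v_0\|_{L^3(\la^k E)} = \|v_0\|_{L^3(E)}$ for the DSS scaling, one gets $\||x|^{1-3/3} v_0\|_{L^3(B_{|x|/2}(x))} = \|v_0\|_{L^3(B_{|x|/2}(x))}$ bounded uniformly in $x$, i.e.\ $v_0 \in K_3$. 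The key subtlety: the constant in \eqref{th1.3-1} must be $\le \e_0^3$, so $\mu$ is chosen small depending on $\|v_0\|_{L^{3,\infty}}$, using absolute continuity of the $L^3$ norm over small balls (uniformly over the DSS orbit, since there are only finitely many orbit representatives of a given scale once restricted to a fundamental annulus). That $v_0 \in E^2$ for $\la$-DSS $L^{3,\infty}$ data follows from $\|v_0\|_{L^2(B_1(x))} \to 0$ as $|x|\to\infty$: indeed $\|v_0\|_{L^2(B_1(x))} \lesssim \||x|^{-1}\|_{L^2(B_1(x))}$-type decay coming from $v_0 \in L^{3,\infty}$ being $(-1)$-homogeneous in scaling; more carefully, $\|v_0\|_{L^2(B_1(x))} \le \|v_0\|_{L^{3,\infty}(B_1(x))}|B_1|^{1/2-1/3}$-type bound combined with the fact that the $L^{3,\infty}$ mass of $v_0$ on $B_1(x)$ tends to zero as $|x|\to\infty$ for DSS data.

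Once $v_0 \in K_3 \cap E^2$ and \eqref{th1.3-1} hold, Theorem \ref{main-thm} applies directly and yields $|v(x,t)| \le C_1/\sqrt t$ for $0<t<\si_2|x|^2$. To obtain regularity everywhere, I would use the DSS invariance of $v$: for any point $(x,t) \in \R^3 \times \R^+$ there is an integer $k$ with $v(x,t) = \la^k v(\la^k x, \la^{2k} t)$, so it suffices to show every point lies, after an appropriate DSS rescaling, below the paraboloid $t = \si_2 |x|^2$. A point $(y,s)$ is below the paraboloid iff $s/|y|^2 < \si_2$; under one DSS scaling step the ratio $s/|y|^2$ is invariant, so this alone does not help — instead one should combine the paraboloid region with the regularity that is automatic for bounded $|x|$: note $v_0$ restricted to any ball away from the origin is in $L^3$ (indeed locally smooth enough), and standard $\e$-regularity plus the energy bounds give regularity of $v$ on any compact subset of $(\R^3 \setminus \{0\}) \times \R^+$, or more precisely one uses that the DSS solution is scaling-periodic so regularity on a single fundamental domain $\{1 \le |x| \le \la\} \times (0,\infty)$ away from $t=0$ transfers everywhere. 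The region not covered by \eqref{th1.3-4} is $\{t \ge \si_2 |x|^2\}$; rescaling by $\la^{-N}$ for large $N$ sends a given point $(x,t)$ with $t \ge \si_2|x|^2$ to $(\la^{-N}x, \la^{-2N}t)$ which has small spatial norm but the same ratio, so one needs the complementary fact: in the region $\{ c \le |x| \le \la c,\ t \ge \si_2 |x|^2\}$ (a genuinely compact-in-space, bounded-below-in-time set once we also cap $t$, but $t$ is unbounded above) one uses decay estimates. Actually the clean route: the region $\{t < \si_2 |x|^2\}$ is DSS-invariant and is a neighborhood of $t=0$; its complement intersected with a fundamental annulus $\{1 \le |x| < \la\}$ is $\{1\le|x|<\la,\ t\ge \si_2|x|^2\} \subset \{1 \le |x| < \la, \ t \ge \si_2\}$, on which $v$ is smooth by interior regularity (the data is locally $L^3$, away from $t=0$ and away from the origin there is no obstruction — this is where one invokes local $\e$-regularity together with the a priori local-Leray bounds, or simply Serrin-type regularity since $v \in L^\infty_t L^2_x \cap L^2_t H^1_x$ locally and the initial-time singularity has been excluded). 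Transporting by DSS scaling covers all of $\R^3 \times \R^+$.

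The last assertion — that for $\la < \la_*(\mu)$ the solution is regular everywhere including up to $t=0$ — I would prove by a continuity/smallness argument in $\la - 1$. The point is that $\si_2 = \si_2(\mu, \|v_0\|_{K_3})$ is a fixed positive number, and as $\la \downarrow 1$ the DSS structure forces the solution to essentially be self-similar to higher and higher order; quantitatively, one shows that if $\la$ is close enough to $1$ then the a priori bound $|v(x,t)| \le C_1/\sqrt t$ valid for $t < \si_2|x|^2$ can be bootstrapped: the region $\{t \ge \si_2 |x|^2\}$ within a fundamental domain, say $\{1 \le |x| < \la\}$, shrinks in a way that, combined with the DSS periodicity and the improving regularity, forces $v$ to be bounded by $C/\sqrt t$ globally. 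More concretely, the threshold $\la_*$ is chosen so that $\la_*^2 < 1 + \si_2$ or a similar inequality, guaranteeing that the paraboloid region $\{t < \si_2|x|^2\}$ already covers a full fundamental spatial annulus worth of the solution for small times, and then DSS scaling propagates the bound $|v(x,t)| \le C_1/\sqrt t$ to all $(x,t)$; the only place where an extra smallness of $\la-1$ enters is to ensure the geometry of the rescaled annuli tiles the paraboloid region with overlap, so that no point escapes. This final step — making the geometric tiling argument precise and showing the estimate $|v| \le C/\sqrt t$ survives under DSS scaling without constant blowup — is the main obstacle, since one must control how $C_1$ interacts with the $\la^k$ factors in $v(x,t) = \la^k v(\la^k x, \la^{2k}t)$ (here it is crucial that \eqref{th1.3-4} has the dimensionally-correct $1/\sqrt t$ form, which is DSS-scaling invariant, so in fact the constant does not blow up — this is the reason the argument closes). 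I would also need to double-check that uniqueness or at least the a priori estimates transfer: since Theorem \ref{main-thm} is a regularity statement valid for any local Leray solution, and $\la$-DSS local Leray solutions exist by \cite{BT1}, there is no issue, but I would remark that the conclusion applies to every $\la$-DSS local Leray solution with the given data, not just a constructed one.
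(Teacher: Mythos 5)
Your treatment of the first part is essentially correct and matches the paper's approach: you establish $v_0 \in K_3$ (the paper cites \cite[Lemma 3.1]{BT1} for this, while you give a direct covering argument), show $v_0 \in E^2$ by scaling, and verify \eqref{th1.3-1} via a finiteness/absolute-continuity argument on a fundamental annulus. The paper implements this last step with an explicit iteration producing finitely many radial shells $r_0 < r_1 < \cdots < r_j$ covering $[1/2, 3\lambda/2]$ with $\int_{r_i \le |x| \le r_{i+1}}|v_0|^3 \le \e/2$, then taking $\mu$ small enough that every ball $B_{\mu|x|}(x)$ with $1\le|x|\le\lambda$ fits inside two consecutive shells; this is the concrete realization of what you call ``absolute continuity of the $L^3$ norm over small balls.'' Feeding this into Theorem \ref{main-thm} is then routine, as you say.

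Your treatment of the ``Furthermore'' part has a genuine gap, and in fact contains a false step. You assert that on $\{1 \le |x| < \lambda,\ t \ge \sigma_2\}$ the solution ``is smooth by interior regularity ... since $v \in L^\infty_t L^2_x \cap L^2_t H^1_x$ locally and the initial-time singularity has been excluded.'' That is not true: away from $t=0$ and from the spatial origin, a suitable weak solution with only the local Leray a priori bounds can still have interior singularities --- if membership in $L^\infty L^2 \cap L^2 H^1$ plus being away from $t = 0$ implied smoothness, the global regularity problem for Navier--Stokes would be solved. You also propose that ``DSS scaling propagates the bound $|v| \le C_1/\sqrt t$ to all $(x,t)$,'' but you yourself note earlier, correctly, that the ratio $t/|x|^2$ is invariant under the DSS scaling; consequently the DSS orbit of a point outside the paraboloid $\{t < \sigma_2|x|^2\}$ stays outside it, and the scaling alone can never transport the bound across the paraboloid. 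These two remarks are mutually inconsistent, and the second attempt (``the clean route'') inherits the same defect. The threshold inequality $\lambda_*^2 < 1 + \sigma_2$ is therefore not justified by the reasoning you give. The paper does not prove this step either --- it defers entirely to \cite[Lemma 3.3]{Tsai-DSSI}, which carries out a genuinely nontrivial argument exploiting the DSS time-periodicity (in Leray similarity variables) together with local $\epsilon$-regularity; one cannot replace it by a compactness or naive tiling argument in the original variables. Since you explicitly flag this step as ``the main obstacle'' without closing it, the proof proposal is incomplete precisely where the nontrivial new idea (and the dependence $\lambda_*=\lambda_*(\mu)$) would have to enter.
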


This corollary answers our motivating problem.

\bigskip

The rest of the paper is organized as follows.
In Section \ref{S2} we recall auxiliary results, including the theorems of Caffarelli-Kohn-Nirenberg~\cite{CKN}, Kato~\cite{Kato84}, and the localization of divergence free vector fields.
In Section \ref{S3} we discuss various definitions and properties of local energy solutions
including a priori estimates for the pressure.
In Section \ref{S-Stokes} we prove the interior regularity result 
for the perturbed Stokes equation.   
Then we address the local analysis of the Navier-Stokes equations 
and the proof of theorem \ref{th1.1} in Section \ref{S4}.
In Section \ref{S5}  we consider local energy solutions with local $L^3$ data, and prove Corollaries \ref{cor1.2}-\ref{cor1.4}.
In Section \ref{S6}  we discuss solutions with data in Herz spaces, and  prove Theorem \ref{main-thm} and Corollaries \ref{cor1.6}-\ref{cor1.7}.
In Section \ref{S7}  Appendix 1, we prove properties of local Leray solutions stated in Section \ref{S3}.

We thank Professors Barker and Prange who kindly sent us their preprint \cite{BaPr} while we are finishing this paper. The preprint \cite{BaPr} contains a result similar to our Corollary \ref{cor1.2} for local energy weak solutions. 

\section{Preliminaries}
\label{S2}

We first recall the
following rescaled version of Caffarelli-Kohn-Nirenberg \cite[Proposition 1]{CKN}. It is formulated in the present form in \cite{NRS,Lin}, and is the basis for many regularity criteria, see e.g.~in \cite{GKT}.
For a suitable weak solution $(v,\pi)$, let
\[
C(r) = \frac 1{r^{2}} \int_{Q_r} |v|^3 dx\,dt, \quad
D(r)=  \frac 1{r^{2}} \int_{Q_r} |\pi|^{3/2} dx\,dt.
\]

\newcommand{\CKN}{\text{\tiny CKN}}

\begin{lemma}\label{CKN-Prop1}
There are absolute constants $\e_{\CKN}$ and $C_{\CKN}>0$ with the following property. Suppose $(v,\pi)$ is a suitable weak solution of NS with zero force in $Q_{r_1}$, $r_1>0$, with
\[
C(r_1) + D(r_1) \le \e_{\CKN},
\]
then $v  \in L^\infty(Q_{r_1/2})$ and
\EQ{\label{CKN-Prop1-eq1}
\norm{v}_{L^\infty(Q_{r_1/2})} \le \frac {C_{\CKN}} {r_1} .
}
\end{lemma}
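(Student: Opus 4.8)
\emph{Proof plan.} This is the classical $\ve$-regularity criterion of Caffarelli--Kohn--Nirenberg, in the $L^3$-velocity/$L^{3/2}$-pressure form of \cite{NRS,Lin}; the plan is to follow the standard two-step scheme, keeping every constant absolute. \emph{First}, by the Navier--Stokes scaling, reduce to $r_1=1$: given $(v,\pi)$ on $Q_{r_1}$, set $\td v(y,s)=r_1v(r_1y,r_1^2s)$ and $\td\pi(y,s)=r_1^2\pi(r_1y,r_1^2s)$ on $Q_1$, which is again a suitable weak solution with zero force and satisfies $\int_{Q_1}\abs{\td v}^3=C(r_1)$, $\int_{Q_1}\abs{\td\pi}^{3/2}=D(r_1)$, and $\norm{\td v}_{L^\infty(Q_{1/2})}=r_1\norm{v}_{L^\infty(Q_{r_1/2})}$. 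So it is enough to find absolute $\ve_{\CKN},C_{\CKN}>0$ with the property that, if $(v,\pi)$ is a suitable weak solution with zero force in $Q_1$ and $\int_{Q_1}(\abs{v}^3+\abs{\pi}^{3/2})\le\ve_{\CKN}$, then $\norm{v}_{L^\infty(Q_{1/2})}\le C_{\CKN}$; undoing the rescaling then gives \eqref{CKN-Prop1-eq1}.

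\emph{Second}, for the scale-$1$ statement I would run the CKN iteration over dyadic parabolic cylinders. Fix $z_0\in\overline{Q_{1/2}}$ and, for $0<\rho\le\tfrac14$, introduce the scaling-invariant quantities, centred at $z_0$,
\[
A(\rho)=\sup_t\tfrac1\rho\int_{B_\rho}\abs{v}^2,\quad
E(\rho)=\tfrac1\rho\int_{Q_\rho}\abs{\nb v}^2,\quad
C(\rho)=\tfrac1{\rho^2}\int_{Q_\rho}\abs{v}^3,\quad
D(\rho)=\tfrac1{\rho^2}\int_{Q_\rho}\abs{\pi}^{3/2}.
\]
The engine is a self-improving inequality built from three ingredients: (i) the local energy inequality, tested against a cutoff of $Q_\rho$ inside $Q_{2\rho}$, controls $A(\rho)+E(\rho)$ by a (super)linear expression in $C(2\rho)+D(2\rho)$; (ii) Gagliardo--Nirenberg in space together with H\"older in time gives $C(\rho)\lec A(2\rho)^{3/4}E(2\rho)^{3/4}+A(2\rho)^{3/2}$, while comparing $v$ on $Q_\rho$ with the solution of the linear Stokes system carrying the same lateral/initial data (obtained through a cutoff) yields a \emph{gain}, $C_{\mathrm{lin}}(\tau\rho)\lec\tau^{3}C(\rho)$; (iii) from $-\De\pi=\pd_i\pd_j(v_iv_j)$ in $B_{2\rho}$ one splits $\pi=p_1+p_2$ with $-\De p_1=\pd_i\pd_j(v_iv_j\mathbf{1}_{B_{2\rho}})$ on $\R^3$, so Calder\'on--Zygmund gives $\norm{p_1(t)}_{L^{3/2}(\R^3)}\lec\norm{v(t)}_{L^3(B_{2\rho})}^2$ (quadratic, hence this part of $D$ is $\lec C(2\rho)$), while $p_2$ is harmonic in $B_{2\rho}$, so its interior estimates give the linear decay $D_{p_2}(\tau\rho)\lec\tau\, D_{p_2}(\rho)$. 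Following the bookkeeping of \cite{CKN,Lin}, I would assemble (i)--(iii) into
\[
\Phi(\tau\rho)\ \le\ C_*\,\tau\,\Phi(2\rho)+C_*\,\tau^{-b}\,\Phi(2\rho)^{1+c},\qquad \Phi:=C+D,
\]
for some absolute $C_*,b,c>0$ and all $0<\tau\le\tfrac12$, $0<\rho\le\tfrac18$. Choosing $\tau$ small so that $C_*\tau\le\tfrac14$ and then $\ve_{\CKN}$ small enough that the smallness of $\Phi$ propagates down the scales $\rho_k=\tau^k/8$ yields $\Phi(z_0,\rho_k)\le2^{-k}\ve_{\CKN}$, hence $C(z_0,\rho)\lec\rho^{\ga}$ for some absolute $\ga>0$, uniformly in $z_0\in\overline{Q_{1/2}}$.

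\emph{Finally}, I would turn this decay into an $L^\infty$ bound. The estimate $\rho^{-2}\int_{Q_{z_0,\rho}}\abs{v}^3\lec\rho^{\ga}$, uniform in $z_0$, places $v$ in a parabolic Morrey space; inserting this into the perturbed heat system $\pd_tv-\De v=-\div(v\otimes v+\pi\,\mathrm{Id})$ and applying parabolic Calder\'on--Zygmund / Riesz-potential (Morrey--Campanato) estimates improves the Morrey exponent, and after finitely many bootstrap steps gives $\osc_{Q_{z_0,\rho}}v\lec\rho^{\al}$, i.e.\ $v\in C^{\al}(\overline{Q_{1/2}})$ for some absolute $\al>0$; in particular $\norm{v}_{L^\infty(Q_{1/2})}\le C_{\CKN}$, and rescaling gives \eqref{CKN-Prop1-eq1}. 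I expect the main obstacle to be items (ii)--(iii) together with the bookkeeping that produces the contraction factor $C_*\tau<1$: the pressure is nonlocal, so its low-frequency (harmonic) part must be isolated and shown to decay linearly while its local part is absorbed into $C$, and the interpolation/energy exponents in (i)--(ii) must be matched so that the residual power of $\Phi$ is genuinely $>1$. A fallback is the compactness/blow-up argument of Lin and Ladyzhenskaya--Seregin, which replaces the explicit iteration by a limiting procedure but then requires a separate stability-of-regularity statement along the limit.
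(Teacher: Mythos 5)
The paper does not prove this lemma; it is quoted as a known result, namely Caffarelli--Kohn--Nirenberg's Proposition~1 \cite{CKN} in the rescaled $L^3/L^{3/2}$ formulation of \cite{NRS,Lin}. So there is no proof in the paper to compare against: you are reconstructing the cited theorem. Your sketch captures the correct architecture (scaling reduction; the self-improving inequality for $\Phi=C+D$ built from the local energy inequality, interpolation, a caloric-comparison gain for $C$, and a harmonic-part gain for $D$; then passage from Morrey decay to boundedness). The step that is genuinely under-developed is the last one: knowing $\rho^{-2}\int_{Q_{z_0,\rho}}|v|^3\lesssim\rho^\gamma$ uniformly in $z_0$ does not by itself bound $v$ pointwise --- the $Q_\rho$-average of $|v|^3$ still grows like $\rho^{\gamma-3}$ as $\rho\to 0$ --- so the Morrey-to-$L^\infty$ bootstrap through the Stokes system (with a matching Morrey estimate on the pressure), or alternatively a Campanato iteration on the velocity oscillation against caloric comparisons, is precisely where the bulk of the technical work in \cite{CKN,Lin} lives and should not be compressed to one sentence. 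That said, the plan is sound, all the key ingredients are named, and the compactness/blow-up argument you mention as a fallback is indeed an established alternative that yields exactly this statement.
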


We next recall the results due to Kato \cite{Kato84} and Giga \cite{Giga}.
\begin{lemma}\label{L3mildsol} There is $\e_2>0$ such that if $v_0\in L^3_\si(\R^3)$ with $\e=\norm{v_0}_{L^3}\le \e_2$, then there is a unique mild solution $v \in L^\infty(0,\infty; L^3(\R^3))$ of \eqref{NS} with zero force and initial data $v_0$ that satisfies
\begin{equation}\label{th2.1-1}
\norm{v}_{L^\infty L^3 \cap L^5_{t,x}(\R^4_+)} + \sup_{t>0}t^{1/2}\norm{v(t)}_{L^\infty(\R^3)}\le C \e .
\end{equation}
\end{lemma}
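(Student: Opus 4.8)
The plan is to run the standard Kato iteration scheme in the scaling-critical space $L^3$, using the mixed-norm space $X_T = \{ v : \norm{v}_{L^5_{t,x}(\R^3\times(0,\infty))} < \infty, \ \sup_{t>0} t^{1/2}\norm{v(t)}_{L^\infty} < \infty, \ v \in C_w([0,\infty);L^3)\}$ as the fixed-point space, and then upgrade to $L^\infty_t L^3_x$. First I would recall the relevant $L^p$–$L^q$ estimates for the heat semigroup: $\norm{e^{t\De} f}_{L^q} \le C t^{-\frac32(\frac1p-\frac1q)}\norm{f}_{L^p}$ for $1\le p\le q\le \infty$, and the corresponding estimate for the Oseen/Leray-projected heat kernel acting on $\PP \div(\cdot)$, namely $\norm{e^{t\De}\PP\,\div F}_{L^q} \le C t^{-\frac12 - \frac32(\frac1p-\frac1q)}\norm{F}_{L^p}$. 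Writing the integral (mild) formulation
\[
v(t) = e^{t\De} v_0 - \int_0^t e^{(t-s)\De}\,\PP\,\div(v\otimes v)(s)\,ds =: e^{t\De}v_0 + B(v,v)(t),
\]
one shows that the linear term $e^{t\De}v_0$ lies in $X_\infty$ with norm $\lesssim \norm{v_0}_{L^3}$ (the $L^5_{t,x}$ bound is exactly the smoothing/Sobolev embedding for the heat flow on $L^3$ data, and the $t^{1/2}L^\infty$ bound is the $L^3 \to L^\infty$ decay estimate), and that the bilinear operator $B$ is bounded on $X_\infty$: $\norm{B(u,v)}_{X_\infty} \le C_0 \norm{u}_{X_\infty}\norm{v}_{X_\infty}$. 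The bilinear estimate is the heart of the matter and is done by inserting the Oseen kernel bound with a Hölder split (for the $L^5_{t,x}$ piece, $v\otimes v \in L^{5/2}_{t,x}$ and one checks the time integral $\int_0^t (t-s)^{-\frac12 - \frac32(\frac25 - \frac15)} \cdots$ converges and scales correctly; for the $t^{1/2}L^\infty$ piece one interpolates between the $L^5_{t,x}$ control and the pointwise decay). Then the standard contraction-mapping lemma (if $\norm{a}_{X} \le \eta$ and $\norm{B(u,v)}_X \le C_0\norm{u}_X\norm{v}_X$ with $4C_0\eta < 1$, there is a unique fixed point with $\norm{v}_X \le 2\eta$) gives, for $\e = \norm{v_0}_{L^3} \le \e_2$ with $\e_2$ chosen so that $4C_0 \cdot C\e_2 < 1$, a unique mild solution with $\norm{v}_{X_\infty} \le C\e$.

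To finish, I would promote the solution to $L^\infty_t L^3_x$: apply $\norm{\cdot}_{L^3}$ to the integral equation, use $\norm{e^{t\De}v_0}_{L^3}\le \norm{v_0}_{L^3}$ and bound $\norm{B(v,v)(t)}_{L^3} \le C\int_0^t (t-s)^{-\frac12 - \frac32(\frac1{p}-\frac13)} \norm{v(s)}_{L^{2p}}^2\,ds$ with a suitable $p$, say using the $L^5$ and $L^\infty$ controls together, to get $\sup_t \norm{v(t)}_{L^3}\le C\e$; continuity in time (weak or strong) follows from the usual density/Bochner argument. This establishes \eqref{th2.1-1}. Uniqueness in the class $L^\infty L^3 \cap L^5_{t,x}$ follows from the same bilinear estimate applied to the difference of two solutions on a short time interval and a continuation/connectedness argument, or alternatively from known weak-strong uniqueness in $L^5_{t,x}$ (a subcritical Serrin-type class).

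The main obstacle, and the only genuinely delicate point, is the bilinear estimate on the $L^5_{t,x}$ component — getting the time-singularity exponent from the Oseen kernel to be integrable at $s=t$ while the spatial Hölder exponents balance to reproduce exactly the critical scaling; this is precisely where the choice of the exponent $5$ (rather than some other Serrin pair) is forced, since $L^5_{t,x}$ is the mixed-norm space that is both critical and yields an integrable singularity. Everything else is bookkeeping with heat-kernel estimates. I should also note that this lemma is not original — it is exactly the Kato \cite{Kato84}/Giga \cite{Giga} theory — so in the paper one may simply cite those references and state \eqref{th2.1-1} as their conclusion, with the proof sketch above serving only to indicate why the stated norms are the natural ones.
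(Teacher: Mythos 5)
Your sketch is a correct reproduction of the classical Kato--Giga contraction argument in the critical space $L^3$ with the auxiliary norms $L^5_{t,x}$ and $t^{1/2}L^\infty_x$, and you correctly note at the end that the lemma can simply be cited. That is exactly what the paper does: it states the lemma as a recollection of Kato \cite{Kato84} and Giga \cite{Giga} and provides no proof, so your argument matches the (implicit) approach.
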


We will need the following localization lemma for divergence free vector fields.

\begin{lemma}[localization] \label{th:localization}
Let $1<p <\infty$ and $0<r<R$. There is a linear map $\Phi$ from $V = \bket{ v \in L^p(B_R;\R^3): \div v=0}$ into itself, and a constant $C = C(p,r/R)>0$ such that for $v \in V$ and $a = \Phi v\in V$, we have ${\rm{supp}}\,a\subset B_{\frac12(r+R) }$, $v=a$ in $B_{r}$,  and $\norm{a}_{L^p(B_R)} \le C \norm{v}_{L^p(B_R)} $.
\end{lemma}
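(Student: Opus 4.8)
The plan is to construct $a = \Phi v$ explicitly by the classical Bogovskii-type correction: multiply $v$ by a cutoff and subtract a divergence-free corrector that kills the error. Fix a smooth cutoff $\chi$ with $\chi = 1$ on $B_r$, $\supp \chi \subset B_{\frac12(r+R)}$, and set $w = \chi v$. Then $w \in L^p(B_R;\R^3)$, $\supp w \subset B_{\frac12(r+R)}$, and $w = v$ on $B_r$, but $\div w = \nb\chi \cdot v =: g$ need not vanish. Note $g \in L^p(B_R)$ is supported in the annulus $\{r \le |x| \le \frac12(r+R)\}$ and, since $\div v = 0$, one checks $\int_{B_R} g\, dx = \int \nb\chi\cdot v = \int \div(\chi v) = 0$ (the boundary term vanishes as $\chi$ is compactly supported inside $B_R$). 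Hence $g$ has mean zero on any ball containing its support.

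Next I would apply the Bogovskii operator $\mathcal B$ on a fixed annular (or ball) domain $\Om$ with $\{r\le|x|\le\frac12(r+R)\}\subset\Om\subset\subset B_R\setminus\{|x|<\tfrac{r}{2}\}$ adjusted so $\mathcal B g$ is supported where we want: more simply, take $\Om$ to be the ball $B_{\frac12(r+R)}$ and recall that $\mathcal B: L^p_0(\Om)\to W^{1,p}_0(\Om;\R^3)$ is bounded with $\div \mathcal B g = g$, and crucially $\mathcal B g$ vanishes near $\pd\Om$ and — because $g$ vanishes on $B_{r/2}$ — one can choose the Bogovskii kernel's star-center so that $\mathcal B g$ also vanishes on $B_{r/2} \subset B_r$ (alternatively, solve the divergence problem on the annulus $A = B_{\frac12(r+R)}\setminus \overline{B_{r/2}}$, where $g\in L^p_0(A)$ since it has mean zero, obtaining $b = \mathcal B_A g \in W^{1,p}_0(A;\R^3)$, extended by zero). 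Then set
\[
a = \Phi v := \chi v - b.
\]
By construction $a$ is linear in $v$, $\div a = g - g = 0$, $\supp a \subset B_{\frac12(r+R)}$, and on $B_r$ we have $\chi v = v$ and $b = 0$ (since $b$ is supported in the annulus $A$ which misses $B_{r/2}$, hence misses part of $B_r$ — wait, $A$ does meet $B_r$; this needs care), so $a = v$ on $B_{r/2}$ at least. To get $a = v$ on all of $B_r$ rather than $B_{r/2}$, simply rename radii at the outset: given the desired $r < R$, pick $r' \in (r,R)$ and run the construction with cutoff equal to $1$ on $B_{r'}$ and the divergence problem solved on $B_{\frac12(r'+R)}\setminus\overline{B_{r'}}$; then $b$ vanishes on $B_{r'}\supset B_r$ and $a = v$ on $B_r$, with $\supp a \subset B_{\frac12(r'+R)} \subset B_{\frac12(r+R)}$ after a further cosmetic adjustment, or just absorb the loss into the statement's freedom. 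Finally, $\norm{a}_{L^p(B_R)} \le \norm{\chi v}_{L^p} + \norm{b}_{L^p} \le C\norm{v}_{L^p(B_R)} + C\norm{g}_{L^p(B_R)} \le C(1 + \norm{\nb\chi}_\infty)\norm{v}_{L^p(B_R)}$, using the $L^p$-boundedness of Bogovskii's operator on the fixed domain $A$; the constant depends only on $p$ and on the geometry of $A$, i.e.\ on $r/R$, as claimed.

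\textbf{Main obstacle.} The delicate point is the support constraint on the corrector $b$: one needs $b$ to vanish \emph{both} near $\pd B_{\frac12(r+R)}$ (so that $\supp a$ stays inside) \emph{and} on $B_r$ (so that $a = v$ there). The Bogovskii solution of $\div b = g$ on a domain $\Om$ with $b\in W^{1,p}_0(\Om)$ vanishes near $\pd\Om$ automatically, so the correct choice is to solve on the \emph{annulus} $A = B_{\frac12(r+R)}\setminus \overline{B_r}$ (where $g$ is supported and has zero mean), which forces $b$ to vanish near both $|x| = r$ and $|x| = \frac12(r+R)$. The only subtlety is verifying $\int_A g = 0$ — which follows from $\div v = 0$ and $\chi$ being constant ($=1$) on a neighborhood of $\overline{B_r}$ and $(=0)$ near $\pd A$'s outer boundary — and then invoking the standard fact that Bogovskii's operator is bounded $L^p_0(A)\to W^{1,p}_0(A;\R^3)$ for the fixed Lipschitz (indeed smooth) annulus $A$, with norm depending only on $p$ and the shape of $A$, hence only on $r/R$ after scaling. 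Everything else is routine; linearity of $\Phi$ is immediate since $\chi\,\cdot$ and $\mathcal B_A$ are linear.
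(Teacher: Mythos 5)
Your proposal is correct and, once the wavering in the middle resolves to the "Main obstacle" paragraph, it is essentially the paper's proof: cut off with $\chi$, observe $g=\nb\chi\cdot v$ has mean zero by $\div v=0$, and correct by the Bogovskii operator on the annulus $A=B_{\frac12(r+R)}\setminus\overline{B_r}$ so that $b\in W^{1,p}_0(A)$ vanishes both on $B_r$ and near the outer boundary. The paper simply normalizes $R=1$ by scaling at the outset rather than tracking $r/R$ throughout; the various alternative constructions you float (solving on the full ball, shrinking to $B_{r/2}$, renaming radii) are unnecessary once you commit to the annulus $A$, since $A$ is disjoint from $B_r$ by construction.
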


\begin{proof}
We may assume $R=1$, since the general case follows by scaling $v(x) \to \td v(y)= v(Ry)$, $x\in B_R$ and $y\in B_1$.
Fix $\chi \in C^\infty_c(\R^3)$ with $\chi=1$ in $B_{r}$ and $\chi(x)=0$ if $|x|\ge \frac 12(r+1)$. We will take
\[
a = \chi v - b,
\]
where
the correction $b$ satisfies
\[
 \div b = \nb  \chi \cdot v, \quad \supp b \subset \bar A, \quad A := B_{\frac 12(r+1)} \setminus \bar B_{r}.
\]
It can be defined by $b = \Pi  (\nb \chi \cdot v)$, where $\Pi$ is a Bogovskii-map from $L^p_0(A)$ to $W^{1,p}_0(A)$, where
\[
L^p_0(A)=\{   f\in L^p(A),\ \textstyle\int_A f =0 \} ,
\]
such that $\div \Pi f = f$ and $\norm{\Pi f}_{W^{1,p}_0(A)} \le C \norm{ f}_{L^{p}_0(A)}$. Since $\int_A \nb  \chi \cdot v=0$, $b$ is defined and we have
$\norm{ b }_p \le C  \norm{\nb b }_p
\le  C \norm{v}_p$. Thus $\norm{a}_p \le C \norm{v}_p$.
\end{proof}

We will also recall the following lemma, which is proved by Jia-Sverak \cite[Lemma 2.1]{JiaSverak}.

\begin{lemma} \label{JSlemma}
Let $f$ be a nonnegative nondecreasing bounded function defined on $[0, 1]$ with the following property:
for some constants $0<\si<1$, $0<\theta <1$, $M>0$, $ \be > 0$, we have
\[
f(s)\le \th f(t)+ \frac M {(t - s)^{\be}}, \quad  \si < s <t <1.
\]
Then,
\[
\sup_{ s\in [0,\si]} f(s)\le C(\si, \theta, \beta)M,
\]
for some positive constant $C$ depending only on $\si, \theta, \beta$.
\end{lemma}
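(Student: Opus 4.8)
The plan is to prove this by a standard iteration argument on a geometric sequence of scales (the classical ``hole-filling'' / Giaquinta-type iteration). Since $f$ is nondecreasing we have $\sup_{s\in[0,\si]}f(s)=f(\si)\le f(\rho_0)$ for every $\rho_0\in(\si,1)$, so it suffices to bound $f$ at one fixed interior point and then invoke monotonicity. Concretely I would fix $\rho_0=\tfrac{1+\si}{2}$ and $R_0=\tfrac{3+\si}{4}$, so that $\si<\rho_0<R_0<1$ and $R_0-\rho_0=\tfrac{1-\si}{4}$.

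First I would choose the contraction parameter. Pick $\tau\in(0,1)$ with $\th\tau^{-\be}<1$; this is possible precisely because $\th<1$ (hence $\th^{1/\be}<1$), e.g.\ $\tau=\bke{\tfrac{1+\th}{2}}^{1/\be}$, which gives $q:=\th\tau^{-\be}=\tfrac{2\th}{1+\th}\in(0,1)$. Then set $t_0=\rho_0$ and $t_{i+1}=t_i+(1-\tau)\tau^i(R_0-\rho_0)$, so that $t_i\nearrow R_0$, $t_{i+1}-t_i=(1-\tau)\tau^i(R_0-\rho_0)$, and every $t_i\in[\rho_0,R_0)\subset(\si,1)$ — exactly the range where the hypothesis may be applied.

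Next I would apply the hypothesis with $s=t_i$ and $t=t_{i+1}$, and abbreviate $A=M(1-\tau)^{-\be}(R_0-\rho_0)^{-\be}$, to obtain $f(t_i)\le\th f(t_{i+1})+A\tau^{-i\be}$ for every $i$. Iterating $k$ times gives $f(\rho_0)=f(t_0)\le\th^{k}f(t_k)+A\sum_{i=0}^{k-1}q^i\le\th^{k}f(t_k)+A(1-q)^{-1}$. Since $f$ is bounded on $[0,1]$ and $0<\th<1$, the term $\th^{k}f(t_k)\to0$ as $k\to\infty$, so $f(\rho_0)\le A(1-q)^{-1}$. Substituting $R_0-\rho_0=\tfrac{1-\si}{4}$ and using $f(\si)\le f(\rho_0)$ then yields $\sup_{s\in[0,\si]}f(s)\le C(\si,\th,\be)M$ with $C(\si,\th,\be)=4^{\be}(1-\tau)^{-\be}(1-q)^{-1}(1-\si)^{-\be}$, where $\tau$ and $q$ depend only on $\th$ and $\be$.

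There is no genuine obstacle here; the argument is routine. The only two points requiring care are: (i) the choice of $\tau$ so that the ratio $q=\th\tau^{-\be}$ of the resulting geometric series is strictly less than $1$ — this is exactly where the assumption $\th<1$ is used — and (ii) ensuring that all iterates $t_i$ remain strictly inside $(\si,1)$ so that the hypothesis applies at every step; monotonicity of $f$ then transfers the bound at the interior point $\rho_0$ to the whole interval $[0,\si]$.
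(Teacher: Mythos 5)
Your proof is correct and is the standard hole-filling iteration that Jia--\v Sver\'ak (and essentially all sources) use for this lemma; the paper itself does not reprove it but simply cites \cite[Lemma 2.1]{JiaSverak}, whose argument is the same geometric-scale iteration you wrote out.
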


\section{Local energy solutions and Leray solutions}
\label{S3}

In this section we discuss the various definitions and properties of local energy solutions, or local Leray solutions, of \eqref{NS}.\snb{TT: which name do you prefer?
HM1226 I chose ``local Leray solution'' to distinguish from Leray-Hopf solution} We will also show a slightly better time integrality of the pressure.

The class of local Leray solutions
was introduced by Lemari\'e-Rieusset  in \cite{LR} to provide a local analogue of Leray's weak solutions \cite{leray}.
He constructed global in time local Leray solutions if $v_0$ belongs to $E^2$.
(Recall $L^q_{\uloc}$, $L^q_{\uloc,\rho}$, and $E^q$ are defined
in the paragraph after \eqref{Lq-uloc}.)
See Kikuchi-Seregin \cite{KiSe} for another construction which treats the pressure carefully.    Note that \cite{LR}, \cite{KiSe} and Jia-Sverak \cite{JiaSverak13,JiaSverak} contain alternative definitions of local Leray solutions.
As some key properties of the solutions are not explicitly included in the definition of  \cite{LR}, we will discuss only the relation of \emph{local energy solutions} of \cite{KiSe}, and \emph{local Leray solutions} of  \cite{JiaSverak13,JiaSverak}.

\begin{definition}[Local enegy solutions \cite{KiSe}]\label{def:localenergy}
A vector field $v\in L^2_{\loc}(\R^3\times [0,\infty))$ is a local energy solution to \eqref{NS} with divergence free initial data $v_0\in E^2$ if:
\begin{enumerate}
\item for some $\pi\in L^{3/2}_{\loc}(\R^3\times [0,\infty))$, the pair $(v,\pi)$ is a distributional solution to \eqref{NS},

\item for any $R>0$,
\begin{equation}\label{uniform-energy}
\esssup_{0\leq t<R^2}\,\sup_{x_0\in \R^3}\, \int_{B_R(x_0 )} |v(x,t)|^2\,dx
+ \sup_{x_0\in \R^3}\int_0^{R^2}\!\!\!\int_{B_R(x_0)} |\nabla v(x,t)|^2\,dx \,dt<\infty,
\end{equation}

\item for all compact subsets $K$ of $\R^3$ we have $v(t)\to v_0$ in $L^2(K)$ as $t\to 0^+$,

\item $v$ is suitable in the sense of Caffarelli-Kohn-Nirenberg, i.e., for all cylinders $Q$ compactly supported in  $ \R^3\times(0,\infty )$ and all non-negative $\phi\in C_c^\infty (Q)$, we have
\EQ{\label{CKN-LEI}
&\int |v|^2\phi(t) \,dx +2\int_0^t\!\! \int |\nabla v|^2\phi\,dx\,dt
\\&\leq %
\int_0^t\!\!\int |v|^2(\partial_t \phi + \Delta\phi )\,dx\,dt +\int_0^t\!\!\int (|v|^2+2\pi)(v\cdot \nabla\phi)\,dx\,dt.
}

\item for every $x_0\in \R^3$, there exists $c_{x_0}\in L^{3/2}(0,T)$ such that
\EQ{\label{pressure-decomposition-B1}
        \pi (x,t)-c_{x_0}(t)&=\frac 1 3 |v(x,t)|^2 +\int_{B_2(x_0)} K(x-y):v(y,t)\otimes v(y,t)\,dy
        \\&+\int_{\R^3\setminus B_2(x_0)} (K(x-y)-K(x_0-y)):v(y,t)\otimes v(y,t)\,dy
}
in $L^{3/2}(0,T; L^{3/2}(B_{3/2}(x_0)))$, where $K(x)=\rm{p.v.} \nabla^2(\frac{1}{4\pi|x|})$.

\item for any compact supported $w \in L^2(\R^3)$,
\EQ{\label{weak-continuity}
\text{the function}\quad
t \mapsto \int_{\R^3} v(x,t)\cdot w(x)\,dx \quad \text{is continuous on }[0,\infty).
}

\end{enumerate}
\end{definition}

Property 6 in Definition \ref{def:localenergy} is rather mild:
Vector fields satisfying Properties 1-5 can be redefined at a subset of time of zero measure so that Property 6 is also satisfied, similar to Leray-Hopf weak solutions.

For any domain $\Omega \subset \R^3$, we say $(v,\pi)$ is a 
\emph{suitable weak solution} in $\Omega \times (0,T)$ if 
it satisfies \eqref{NS} in the sense of distributions in 
$\Omega \times (0,T)$, 
$$
v \in L^\infty L^2(Q) \cap L^2 \dot{H}^1(Q), \quad  \pi \in L^{3/2}(Q),
$$
and local energy inequality \eqref{CKN-LEI} 
for all cylinders $Q$ compactly supported in  $ \Omega \times(0,T)$ and all non-negative $\phi\in C_c^\infty (Q)$.

\begin{definition}[local Leray solutions of \cite{JiaSverak13,JiaSverak}]\label{def:Leray}
A vector field $v\in L^2_{\loc}(\R^3\times [0,\infty))$ is a local Leray solution to \eqref{NS} with divergence free initial data $v_0\in E^2$ if
properties 1--4 of Definition \ref{def:localenergy} are satisfied, while properties 5--6 are replaced by
\begin{enumerate}
\item[7.] for any $R>0$,
\begin{equation}\label{spatial-decay}
\lim_{|x_0|\to \infty} \int_0^{R^2} \!\!\! \int_{B_R(x_0 )} | v(x,t)|^2\,dx \,dt=0,
\end{equation}
\end{enumerate}
\end{definition}

On one hand, Definition \ref{def:localenergy} requires the pressure decomposition formula
\eqref{pressure-decomposition-B1} in $B_1(x_0)$ for every $x_0$.
On the other hand, in Definition \ref{def:Leray}, the formula \eqref{pressure-decomposition-B1} is replaced by the decay condition
\eqref{spatial-decay} at spatial infinity.
Jia and  \v Sver\'ak claim in \cite{JiaSverak13,JiaSverak} that, if $v$ exhibits this decay, then the pressure  decomposition formula \eqref{pressure-decomposition-B1} is valid.  Since the decay property is easier to verify for a given solution, this justifies using it in place of the explicit pressure formula \eqref{pressure-decomposition-B1}. Since  \cite{JiaSverak13,JiaSverak} do not provide a proof and we need a better estimate for the pressure, we will prove the equivalence of the two definitions,
using ideas contained in a recent preprint of Maekawa, Prange and the second author \cite{MaMiPr}
 on the construction of local energy solutions in the half space.

The following lemma from \cite{KiSe} shows that  a local energy solution is also a local Leray solution.

\begin{lemma}[\cite{KiSe}, Lemma 2.2] \label{th3.3}
Let $\chi_R(x)=\chi(\frac xR)$ and $\chi(x)$ be a smooth cut-off function in $\R^3$ so that $\chi(x)=0$ for $|x|<1$ and $\chi(x)=1$ for $|x|>2$.
A local energy solution $(v,\pi)$ with divergence free initial data $v_0\in E^2$ in the sense of Definition \ref{def:localenergy} has the decay estimate
\EQ{
\esssup_{0<t<T} \al_R(t) + \beta_R(T) + \ga_R^{\frac23}(T) + \de_R^{\frac43}(T)
\le C(T,A) \bket{ \norm{\chi_R v_0}_{L^2_\uloc}^2 + R^{-2/3}},
}
for any $T\in (0,\infty)$ and $ R\in (1,\infty)$,
where $A = \esssup_{0<t<T} \al_0(t) + \beta_0(T) + \ga_0^{\frac23}(T) $ and
\[
\al_R(t) = \norm{ \chi_R v(\cdot,t)}_{L^2_\uloc}^2 , \quad \beta_R(t) =\sup_{x_0\in \R^3} \int_0^t \int _{B_1(x_0)}  |\chi_R \nb v|^2
\]
\[
 \ga_R(t) =\sup_{x_0\in \R^3} \int_0^t \int _{B_1(x_0)}  |\chi_R  v|^3, \quad
 \de_R(t) =\sup_{x_0\in \R^3} \int_0^t \int _{B_{3/2}(x_0)}  |\chi_R (p-c_{x_0})|^{3/2}.
\]
In particular, a local energy solution $(v,\pi)$ to \eqref{NS}  satisfies \eqref{spatial-decay} and is a local 
 Leray solution to \eqref{NS}  in the sense of Definition \ref{def:Leray}.

\end{lemma}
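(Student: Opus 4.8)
The plan is to prove Lemma~\ref{th3.3} by deriving a differential-type inequality for the localized energy along the lines of the standard local energy estimate, but keeping track of the cut-off $\chi_R$ so that the driving term on the right-hand side is small when $R$ is large. First I would test the local energy inequality \eqref{CKN-LEI} with a test function of the form $\phi(x,t) = \chi_R(x)^2 \psi_{x_0}(x) \theta(t)$, where $\psi_{x_0}$ is a spatial cut-off localizing to a unit ball $B_1(x_0)$ (or $B_{3/2}(x_0)$ for the pressure term) and $\theta$ a temporal cut-off, then take the supremum over $x_0 \in \R^3$. The left-hand side produces $\al_R(t)$ and $\beta_R(t)$; the right-hand side produces (i) a ``good'' dissipation-free term $\int |v|^2 |\nabla(\chi_R^2 \psi_{x_0})|^2$ and similar, which are controlled by $\al_R$ and by quantities already bounded by $A$, (ii) the cubic term $\int |v|^3 |\nabla(\chi_R \psi_{x_0})|$, and (iii) the pressure term $\int |v|\,|\pi - c_{x_0}|\,|\nabla(\chi_R^2\psi_{x_0})|$. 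The key point is that every spatial derivative either lands on $\psi_{x_0}$, producing a factor supported in $B_1(x_0)$ that is controlled by $A$-type quantities times $\al_R$, or lands on $\chi_R$, producing a factor supported in the annulus $\{R \le |x| \le 2R\}$ with $|\nabla \chi_R| \lesssim R^{-1}$; the latter contributes the $R^{-2/3}$ error after applying H\"older in the annulus.

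The second step is to handle the pressure. Using the decomposition \eqref{pressure-decomposition-B1}, write $\pi - c_{x_0}$ near $B_{3/2}(x_0)$ as the sum of $\frac13 |v|^2$, a near-field singular-integral term over $B_2(x_0)$, and a far-field term with the kernel difference $K(x-y)-K(x_0-y)$. The local term $\frac13|v|^2$ and the near-field Calder\'on--Zygmund term are controlled in $L^{3/2}$ by the $L^3$ norm of $v$ on $B_2(x_0)$, hence by $\ga_0^{2/3}$ and $\al_0$ up to interpolation; the far-field term uses the decay $|K(x-y)-K(x_0-y)| \lesssim |x-x_0|/|x_0-y|^4$ together with $\sup_{x_0}\|v\|_{L^2(B_1(x_0))}^2 \lesssim A$ to obtain a bound of the form $\|v(t)\|_{L^2_\uloc}^2$, which is $\lesssim A$. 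Crucially, when this pressure is multiplied against $|v|\,|\nabla(\chi_R^2\psi_{x_0})|$, the factor $|\nabla \chi_R|$ again restricts to the annulus and supplies the smallness, while the remaining pieces are bounded by products of already-controlled $A$-quantities and $\al_R$; applying Young's inequality absorbs any $\al_R$ or $\beta_R$ appearing with a small coefficient into the left-hand side. One then obtains, after integrating in time and using Gronwall, the stated bound with the constant $C(T,A)$ and the right-hand side $\|\chi_R v_0\|_{L^2_\uloc}^2 + R^{-2/3}$; the $\|\chi_R v_0\|_{L^2_\uloc}^2$ term comes from the initial-data contribution $\int |v_0|^2 \chi_R^2 \psi_{x_0}$ upon using property~3 (strong $L^2_{\loc}$ convergence to $v_0$).

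Finally, to conclude that a local energy solution satisfies \eqref{spatial-decay} and is therefore a local Leray solution in the sense of Definition~\ref{def:Leray}, I would note that for fixed $R$ the left-hand side of \eqref{spatial-decay} is, up to the cut-off comparison, dominated by $\ga_R^{2/3}$ plus lower-order terms evaluated at $x_0$ with $|x_0|$ large; more directly, since $v_0 \in E^2$ we have $\|\chi_{R} v_0\|_{L^2_\uloc} \to 0$ as $R \to \infty$, and then for $|x_0|$ large enough $B_R(x_0)$ sits where $\chi_{2R'} \equiv 1$ for a suitable smaller $R'$, so the displayed decay estimate (applied with a radius growing slowly with $|x_0|$, or simply with $R$ fixed and then $R\to\infty$ after the $\limsup$) forces $\int_0^{R^2}\int_{B_R(x_0)}|v|^2 \to 0$. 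All of properties 1--4 of Definition~\ref{def:Leray} are inherited verbatim from Definition~\ref{def:localenergy}, so only \eqref{spatial-decay} needs checking, and it follows from the decay estimate.

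The main obstacle I anticipate is the careful bookkeeping of the pressure term: one must make sure that after inserting the explicit decomposition \eqref{pressure-decomposition-B1} into the local energy inequality, every resulting term is either (a) genuinely small because a derivative of $\chi_R$ localizes it to the annulus where only the $R^{-2/3}$ error is produced, or (b) controllable by $C(T,A)$ times an already-bounded quantity, or (c) a term with a small coefficient times $\al_R$ or $\beta_R$ that can be absorbed. The far-field pressure estimate in particular requires the uniform-in-$x_0$ $L^2$ bound on $v$, and one has to be slightly cautious that the constant $c_{x_0}(t)$ drops out correctly (it is killed by $\nabla$ acting on the test function, since the test function integrates against $\nabla(\chi_R^2\psi_{x_0})$ and $\div v = 0$, or simply because only $\pi - c_{x_0}$ appears through \eqref{pressure-decomposition-B1}). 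Everything else is a routine, if lengthy, energy-estimate computation of the type in \cite{LR,KiSe}.
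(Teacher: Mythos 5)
The paper does not contain its own proof of this lemma: it is cited verbatim from Kikuchi--Seregin \cite[Lemma~2.2]{KiSe}, and the only internal remark is the one-line observation that the quantity $A$ is finite by properties~2--5 and Sobolev embedding. So there is no in-paper argument to compare yours against directly; one can only assess your outline on its own merits and against the standard strategy, which is indeed the one \cite{KiSe} follows. Your choice of test function $\chi_R^2\psi_{x_0}\theta$ in the local energy inequality, the use of the pressure decomposition \eqref{pressure-decomposition-B1} with $c_{x_0}(t)$ dropping out by $\div v=0$, the Gronwall step giving $C(T,A)$, and the passage from the decay estimate to \eqref{spatial-decay} by sending $R\to\infty$ while using $v_0\in E^2$, are all correct and match the expected route.

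That said, your treatment of the pressure contribution, and in particular of $\de_R$, glosses over the genuinely delicate part of the proof. You bound the far-field piece $p_{\mathrm{far}}$ pointwise by $\|v(t)\|_{L^2_{\uloc}}^2\lesssim A$, which is fine when that pressure only enters the local energy inequality multiplied by $|\nabla\chi_R|\lesssim R^{-1}$ (that indeed produces a decaying error for the $\al_R,\beta_R$ part). But the quantity $\de_R(T)=\sup_{x_0}\int_0^T\int_{B_{3/2}(x_0)}|\chi_R(\pi-c_{x_0})|^{3/2}$ does not come out of the local energy inequality at all; it must be estimated directly from the pressure formula, and for $x_0$ with $|x_0|\gtrsim R$ the cut-off $\chi_R$ is identically $1$ on $B_{3/2}(x_0)$, so no factor $\nabla\chi_R$ appears. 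If you only know $|p_{\mathrm{far}}|\lesssim A$, the resulting bound on $\de_R$ does not decay in $R$. To obtain decay you have to split the kernel integral: for $y$ near the origin one exploits that $|y-x_0|\gtrsim R$ so the kernel difference is $O(R^{-4})$ against an $O(R^3 A)$ mass, while for $y$ in the region where $\chi_R\approx1$ one commutes $\chi_R$ past the Calder\'on--Zygmund operator (the commutator gains $R^{-1}$) and lands on $\chi_R v\otimes\chi_R v$, which is controlled by the already-established $\al_R,\beta_R,\ga_R$ decay. The estimates for $\al_R,\beta_R$ are therefore logically prior to $\de_R$, and the Gronwall step must be organized accordingly; the ``crossover'' annulus $R\lesssim|x_0|\lesssim 2R$ is the place where this bookkeeping is least forgiving. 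A secondary point is that you attribute the $R^{-2/3}$ to ``H\"older in the annulus,'' but the natural errors from $|\nabla\chi_R|\lesssim R^{-1}$ and $|\nabla^2\chi_R|\lesssim R^{-2}$ are $R^{-1}$ and $R^{-2}$; the $R^{-2/3}$ is what survives after the interpolation that produces $\ga_R^{2/3}$ and $\de_R^{4/3}$ on the left-hand side, and it is worth stating explicitly where the exponent degrades.
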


It is standard to see that $A$ is finite by using properties 2--5 and the Sobolev embedding. %

The following lemma shows that  a local Leray solution is also a local energy solution.  It is stated in \cite{JiaSverak13,JiaSverak} without a proof.
We will give a proof in Appendix 1 (\S\ref{S7}) for the sake of completeness.

\begin{lemma}[pressure decomposition] \label{pressure-decomposition}
Suppose  $(v,\pi)$ is a local Leray solution to \eqref{NS} with divergence free initial data $v_0\in E^2$ in the sense of Definition \ref{def:Leray}. For any $x_0\in \R^3$, $r>0$, and $T>0$, we have for $(x,t) \in Q:= B_r(x_0)\times (0,T)$,
\EQ{
\label{pressure-decomposition-Br}
\pi(x,t) &=\pi_\loc(x,t) + \pi_\far(x,t) + c_{x_0,r}(t)
\\
\pi_\loc(x,t) & = -\frac 13  |v|^2(x,t) + \int_{B_{2r}(x)} K(x-y):
( v \otimes v)(y,t)\, dy %
\\
\pi_\far(x,t) & = \int_{\R^3 \backslash B_{2r}(x)} 
\bkt{K(x-y) - K(x_0-y)}:(v \otimes v)(y,t) \, dy %
}
 for some function $c_{x_0,r}(t)\in L^{3/2}(0,T)$.
In particular, $(v,\pi)$ is a
 local energy solution to \eqref{NS} with initial data $v_0$ in the sense of Definition \ref{def:localenergy}.
\end{lemma}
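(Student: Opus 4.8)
The plan is to prove Lemma \ref{pressure-decomposition} by taking a local Leray solution $(v,\pi)$ in the sense of Definition \ref{def:Leray} and constructing, for each fixed $x_0$ and $r>0$, the explicit pressure decomposition \eqref{pressure-decomposition-Br}, thereby verifying property 5 of Definition \ref{def:localenergy}; property 6 is then a soft modification on a null set of times as already noted in the text after Definition \ref{def:localenergy}. The key point is that the pressure in a distributional solution is only determined up to a function of time $c(t)$, so the real content is to show that the \emph{explicit} candidate built from the Calder\'on--Zygmund kernel $K$ differs from the given $\pi$ only by such a function of time, and that all three pieces lie in the correct spaces.

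First I would fix $x_0\in\R^3$ and $r>0$, and introduce a cutoff $\eta\in C_c^\infty(B_{2r}(x_0))$ with $\eta=1$ on $B_{3r/2}(x_0)$ (or a slightly different radius adapted to the statement), and split $v\otimes v = \eta\,v\otimes v + (1-\eta)v\otimes v$. Define $\pi_{\mathrm{loc}}$ as the Calder\'on--Zygmund singular integral applied to the near part: formally $\pi_{\mathrm{loc}} = -\tfrac13|v|^2 + \mathrm{p.v.}\!\int K(x-y):(v\otimes v)(y,t)\,dy$ restricted to the ball $B_{2r}(x)$ of integration, which makes sense pointwise in $t$ since $v(\cdot,t)\in L^2_{\mathrm{uloc}}$ and hence $v\otimes v(\cdot,t)\in L^1_{\mathrm{uloc}}$, and locally $v\otimes v\in L^{3/2}$ by the energy bound \eqref{uniform-energy} plus Sobolev embedding, so $\pi_{\mathrm{loc}}(\cdot,t)\in L^{3/2}_{\mathrm{loc}}$ with the time integrability $L^{3/2}(0,T;L^{3/2})$ coming from the same interpolation $v\in L^{10/3}_{t,x}$ that bounds $|v|^2\in L^{5/3}_{t,x}$. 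For $\pi_{\mathrm{far}}$, the subtraction of $K(x_0-y)$ inside $\R^3\setminus B_{2r}(x)$ gives, by the mean value theorem, a kernel of size $O(|x-x_0|/|x_0-y|^4)$ for $y$ far away, which is integrable against $v\otimes v\in L^1_{\mathrm{uloc}}$ on $x\in B_r(x_0)$; this yields $\pi_{\mathrm{far}}\in L^\infty_x(B_r(x_0))$ for a.e.\ $t$ with an $L^{3/2}(0,T)$ bound (one only needs the $L^{10/3}_{t,x}$-type control of $v$). The main work is then to show $\Delta(\pi - \pi_{\mathrm{loc}} - \pi_{\mathrm{far}}) = 0$ in $\mathcal D'(B_r(x_0))$ for a.e.\ $t$: applying the divergence of the momentum equation gives $-\Delta\pi = \partial_i\partial_j(v_iv_j)$ in $\mathcal D'(\R^3)$ (locally), while by construction $-\Delta(\pi_{\mathrm{loc}}+\pi_{\mathrm{far}})$ reproduces the same right-hand side on $B_r(x_0)$ — the far part being harmonic there since its kernel is smooth on the region of integration, and the local part reproducing $\partial_i\partial_j(\eta v_iv_j) = \partial_i\partial_j(v_iv_j)$ on $B_{3r/2}(x_0)\supset B_r(x_0)$. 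A harmonic function which is also in $L^{3/2}(B_r(x_0))$ for a.e.\ $t$ is then smooth in $x$; but to conclude it is a function of $t$ \emph{alone} I would either shrink to a slightly smaller ball and invoke the fact that the difference is harmonic on the whole of $B_r(x_0)$ and argue via a Liouville-type/scaling argument, or more robustly compare two decompositions at overlapping balls to see the harmonic correction is independent of $x$; the cleanest route is to note the classical fact (used in \cite{KiSe}, \cite{JiaSverak}) that the difference, being harmonic in $x$ for each fixed $t$ and globally controlled, must be affine and then the divergence-free structure forces it to be constant in $x$, i.e.\ equal to some $c_{x_0,r}(t)$, with $c_{x_0,r}\in L^{3/2}(0,T)$ because all the other terms are.

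The step I expect to be the main obstacle is precisely the identification of the harmonic remainder as a pure function of time with the claimed $L^{3/2}_t$ integrability, rather than merely an $x$-harmonic function: one must leverage the \emph{uniform-in-}$x_0$ bound \eqref{uniform-energy} and the spatial decay \eqref{spatial-decay} from Definition \ref{def:Leray} — this is exactly where the local Leray (as opposed to merely local suitable weak) hypothesis enters, and where the argument of \cite{MaMiPr} referenced in the text is used. Concretely, I would use \eqref{spatial-decay} to control $\pi_{\mathrm{far}}$ uniformly and to pin down the additive constant by testing against the equation on large balls; the technical care is in justifying the manipulations (Fubini, differentiating under the integral, principal-value interpretation of $K$) for a.e.\ $t$, which is routine but lengthy. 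Once \eqref{pressure-decomposition-Br} is established for every $x_0$ and $r$, specializing to $r=1$ (and absorbing the $x_0$-dependent affine part of $K(x_0-y)$) recovers \eqref{pressure-decomposition-B1}, so property 5 holds; modifying $v$ on a null time set gives property 6; hence $(v,\pi)$ is a local energy solution in the sense of Definition \ref{def:localenergy}, completing the proof.
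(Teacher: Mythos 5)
Your plan diverges from the paper's proof at precisely the point you yourself flag as ``the main obstacle,'' and I do not think the route you sketch closes that gap. The paper does not argue about the harmonic remainder of $\pi$ at all; it instead reconstructs a representative $\bar v$ of the velocity by Duhamel's formula with an $\e$-regularized nonlinearity, proves $\bar v = v$, and reads the pressure decomposition off the Stokes equation for $\bar v$ as a byproduct.

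Here is why your approach runs into trouble. Taking the divergence of the momentum equation only gives $-\Delta\pi = \partial_i\partial_j(v_iv_j)$ locally; subtracting your explicit candidate $\pi_{\loc}+\pi_{\far}$ (which also solves this Poisson equation on $B_r(x_0)$) shows that $h := \pi - \pi_{\loc}-\pi_{\far}$ is harmonic in $x$ for a.e.\ $t$, and nothing more. To conclude that $h$ is a function of $t$ alone you need some uniform or global control of $h$, but Definition~\ref{def:Leray} only guarantees $\pi \in L^{3/2}_{\loc}(\R^3\times[0,\infty))$ — there is no $L^{3/2}_{\uloc}$ bound and no decay hypothesis on $\pi$. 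The decay condition \eqref{spatial-decay} is a statement about $v$, not about $\pi$. A harmonic function that is merely in $L^{3/2}_{\loc}$ need not be constant (e.g.\ $h(x)=x_1$). Your three proposed fixes do not repair this: ``shrink to a smaller ball and use Liouville/scaling'' fails because Liouville theorems require growth control at infinity; patching decompositions on overlapping balls does extend $h(\cdot,t)$ to a globally defined harmonic function on $\R^3$, but again without a growth bound one cannot conclude it is constant; and the claim that ``the divergence-free structure forces an affine harmonic remainder to be constant in $x$'' is simply false — an affine pressure corresponds to a time-dependent but spatially uniform forcing, and neither $\nabla\cdot v = 0$ nor the weak form of \eqref{NS} rules this out on its own. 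This is exactly the circularity the paper is careful to avoid: one is \emph{trying to prove} that the pressure is the canonical Calder\'on--Zygmund one, so one cannot assume the control on $\pi$ that would make the Liouville argument go through.

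The paper's proof (in Appendix~1, \S\ref{S7}) sidesteps the issue as follows. It defines $v^\e(t) = e^{t\Delta}v_0^\e + \int_0^t e^{(t-s)\Delta}\mathbb P\nabla\cdot[\phi_\e v\otimes v](s)\,ds$ and its limit $\bar v$, uses Lemma~\ref{Maekawa-Terasawa} and the decay \eqref{spatial-decay} of $v$ to get $v^\e\to\bar v$ in $L^\infty(0,T;L^q_{\uloc})$ for $q<3/2$, writes down the explicit pressures $p^\e$ and their local/far decompositions with uniform-in-$\e$ estimates \eqref{pe.uest}, passes to the limit to get a distributional Stokes solution $(\bar v,\bar p)$ with forcing $-\nabla\cdot(v\otimes v)$, and then proves $\bar v = v$: setting $u=v-\bar v$, the (mollified) vorticity $\curl u$ solves the heat equation with zero data, hence vanishes; so $u$ is curl- and divergence-free, hence harmonic in $x$, and vanishes at spatial infinity by \eqref{spatial-decay} and \eqref{barv.decay}, hence $u\equiv 0$. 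Once $\bar v = v$ is known, comparing the two equations gives $\nabla\pi = \nabla\bar p$ on $B_R(x_0)\times(0,T)$, so $\pi - \bar p_{x_0,R}$ is a function of $t$, and its $L^{3/2}$ integrability follows because both $\pi$ and $\bar p_{x_0,R}$ are $L^{3/2}_{t,x}$ on that cylinder. Note that the spatial decay \eqref{spatial-decay} — the only nontrivial hypothesis of Definition~\ref{def:Leray} beyond properties 1--4 — enters twice (to justify the Duhamel limit and to conclude $u\equiv 0$), and never as a bound on $\pi$. If you want to rescue your more direct route, you would need to independently establish that $h$ is globally controlled or that $\nabla h = 0$, which amounts to re-doing the uniqueness argument in disguise; it is cleaner to follow the paper's construction.
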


The decomposition \eqref{pressure-decomposition-Br} is stronger than \eqref{pressure-decomposition-B1} since the radius $r$ is arbitrary.

We do not have a bound of $c_{x_0,r}$, which is not needed anyway, since the quantity in the equation \eqref{NS} is $\nb \pi$. Formally
$
c_{x_0,r}(t)  = 
\int_{\R^3 \backslash B_{2r}(x)}  K(x_0-y)(  v \otimes v)(y,t)\,dy
$,
but the integral does not converge.

With both Lemmas \ref{th3.3} and \ref{pressure-decomposition}, we can treat local energy solutions and local Leray solutions as the same.%
\medskip

The following lemma is the a priori bounds for the local Leray solutions.
In particular the first estimate \eqref{ineq.apriorilocal} is proved in \cite[Lemma~2.2]{JiaSverak13}.
We will give a proof in Appendix 1 (\S\ref{S7}).

\begin{lemma}[a priori bounds] \label{apriori-bounds}
Suppose  $(v,\pi)$ is a local Leray solution to \eqref{NS} with divergence free initial data $v_0\in E^2$ and $\pi$ is decomposed as in Lemma \ref{pressure-decomposition} in every $B_r(x_0)$.
For any $s,q>1$ with   $\frac 2{s}+ \frac 3{q} = 3$, 
there exists a positive constant $C(s,q)$ such that
\begin{equation}\label{ineq.apriorilocal}
\esssup_{0\leq t \leq \sigma r^2}\sup_{x_0\in \RR^3} \frac 1r\int_{B_r(x_0)}| v|^2  \,dx + \sup_{x_0\in \RR^3}   \frac 1r\int_0^{\sigma r^2}\!\!\!\int_{B_r(x_0)} |\nabla  v|^2\,dx\,dt <C_0 N_r ,
\end{equation}
\begin{equation}
\label{p.apriorilocal}
\sup_{x_0 \in \R^3} \frac 1r \norm{\pi -c_{x_0,r}(t)}_{L^s(0,\si r^2; L^q(B_r(x_0)))}   \le C(s,q) N_r
\end{equation}
where
\begin{equation*}
N_r = \sup_{x_0\in \RR^3} \frac 1r \int_{B_r(x_0)}  {|v_0|^2}\,dx,\quad
{\sigma=} \sigma(r) =c_0\, \min\big\{(N_r)^{-2} , 1  \big\},
\end{equation*}
for universal constants $C_0$ and $c_0>0$.%
\end{lemma}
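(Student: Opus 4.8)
The plan is to reduce everything to $r=1$ by scaling and then run two separate arguments: a local energy estimate (which is essentially \cite[Lemma 2.2]{JiaSverak13}) for \eqref{ineq.apriorilocal}, and then, on top of it, a direct estimate of the pressure using the decomposition of Lemma \ref{pressure-decomposition} for \eqref{p.apriorilocal}. Under the natural scaling $v(x,t)\mapsto r\,v(rx,r^2t)$, $\pi(x,t)\mapsto r^2\pi(rx,r^2t)$, the quantity $N_r$ becomes $N_1$ of the rescaled field, the window $(0,\sigma r^2)$ becomes $(0,\sigma)$, and — precisely because $\frac2s+\frac3q=3$ — the combination $\frac1r\norm{\pi-c_{x_0,r}}_{L^s_tL^q_x(B_r(x_0)\times(0,\sigma r^2))}$ is scale invariant; so it suffices to prove \eqref{ineq.apriorilocal} and \eqref{p.apriorilocal} with $r=1$ and $\sigma=c_0\min(N_1^{-2},1)$.

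\emph{The energy bound.} For \eqref{ineq.apriorilocal} I would fix $x_0$ and $\psi\in C^\infty_c(B_2(x_0))$ with $\psi\equiv1$ on $B_1(x_0)$, apply the local energy inequality \eqref{CKN-LEI} with $\phi=\psi^2\eta(t)$, and pass to the limit $\eta\uparrow\mathbf1_{(0,t)}$, using the strong $L^2_\loc$ convergence $v(\cdot,\tau)\to v_0$ as $\tau\to0^+$ to control the initial slice. Writing $E(t)=\esssup_{0<\tau<t}\sup_{x_0}\int_{B_1(x_0)}|v|^2$ and $D(t)=\sup_{x_0}\int_0^t\int_{B_1(x_0)}|\nb v|^2$ and covering $B_2(x_0)$ by $O(1)$ unit balls: the $\Delta\psi^2$ term is $\lec\int_0^tE$; the cubic term $\int|v|^3|\nb\psi^2|$ is handled by Gagliardo–Nirenberg on unit balls and Young's inequality, absorbing $\tfrac1{100}D(t)$ and leaving $\lec\int_0^t(E^3+E^{3/2})$; and the pressure term $\int\pi\,(v\cdot\nb\psi^2)$ is treated via the decomposition $\pi=\pi_\loc+\pi_\far+c_{x_0,1}(t)$ of Lemma \ref{pressure-decomposition} (the additive $c_{x_0,1}(t)$ drops out since $\int v\cdot\nb\psi^2=0$), using
\[
\norm{\pi_\loc(t)}_{L^{3/2}(B_1(x_0))}\lec\norm{v(t)}_{L^3(B_3(x_0))}^2,\qquad
\norm{\pi_\far(t)}_{L^\infty(B_1(x_0))}\lec\sup_{x_0'}\int_{B_1(x_0')}|v(t)|^2 .
\]
The first follows from Calderón–Zygmund boundedness of $f\mapsto\mathrm{p.v.}\!\int K(x-y)f\,dy$ on $L^{3/2}$ together with $B_2(x)\subset B_3(x_0)$ for $x\in B_1(x_0)$; the second from $|K(x-y)-K(x_0-y)|\lec|y-x_0|^{-4}$ for $y\notin B_2(x)$, $x\in B_1(x_0)$, and a dyadic sum. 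These turn the pressure term into a further contribution $\tfrac1{100}D(t)+C\int_0^t(E^3+E^{3/2}+E)$, and a standard continuity (bootstrap) argument then shows $E(t)+D(t)\le C_0N_1$ for $t\le\sigma=c_0\min(N_1^{-2},1)$, the binding constraint coming from the cubic term $\int_0^\sigma E^3$.

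\emph{The pressure bound.} Granting \eqref{ineq.apriorilocal}, so $E(\sigma)+D(\sigma)\le C_0N_1$, I would split $\pi-c_{x_0,1}=\pi_\loc+\pi_\far$ on $B_1(x_0)\times(0,\sigma)$. For the far part, the $L^\infty_x$ estimate above and $\sigma\le1$ give $\norm{\pi_\far}_{L^s_tL^q_x(B_1(x_0)\times(0,\sigma))}\lec\sigma^{1/s}E(\sigma)\lec N_1$. For the local part, the $L^q$ Calderón–Zygmund estimate and covering give $\norm{\pi_\loc}_{L^s_tL^q_x(B_1(x_0)\times(0,\sigma))}\lec\norm{v}_{L^{2s}_tL^{2q}_x(B_3(x_0)\times(0,\sigma))}^2$; now $\frac2s+\frac3q=3$ is exactly $\frac2{2s}+\frac3{2q}=\frac32$, the scaling relation for which — with $1<q<3$ keeping us off the $L^\infty_t$ endpoint — the parabolic interpolation
\[
\norm{v}_{L^{2s}_tL^{2q}_x(B_1(x_0')\times(0,\sigma))}\lec\norm{v}_{L^\infty_tL^2_x(B_1(x_0'))}^{1-\theta}\Big(\int_0^\sigma\norm{v(t)}_{H^1(B_1(x_0'))}^2\,dt\Big)^{1/(2s)},\quad\theta=\tfrac32-\tfrac3{2q},
\]
holds; since $\int_0^\sigma\norm{v(t)}_{H^1(B_1(x_0'))}^2\,dt\le\sigma E(\sigma)+D(\sigma)\le C_0N_1$ and the exponents satisfy $(1-\theta)+2/s=2$, the right side squared is $\lec C_0N_1$ uniformly in $x_0'$. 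Summing the two parts and taking $\sup_{x_0}$ yields \eqref{p.apriorilocal}, with $C(s,q)$ blowing up as $q\to1^+$ or $q\to3^-$.

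\emph{Main obstacle.} I expect the delicate point to be the continuity argument for \eqref{ineq.apriorilocal}: tracking the powers of $t$ and of $E$ through the near/far pressure split carefully enough to land on precisely $\sigma\sim\min(N_1^{-2},1)$, and the routine but technical justification of the $\eta\uparrow\mathbf1_{(0,t)}$ limit at the initial time — plus making sure the decomposition of Lemma \ref{pressure-decomposition} is genuinely used at the right radius (any $r>0$, hence here $r=1$), so that the Calderón–Zygmund and kernel-difference estimates apply on exactly the balls $B_3(x_0)$ and $\R^3\setminus B_2(x)$. Once these are in place, \eqref{p.apriorilocal} follows from the exponent arithmetic above essentially by inspection.
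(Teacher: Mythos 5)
Your proposal is correct and follows essentially the same route as the paper: scale to $r=1$, run the local energy inequality with the pressure decomposition of Lemma~\ref{pressure-decomposition} fed in (the paper does this by citing \cite[Lemma~3.2]{KiSe} and invoking \eqref{barp.uest}), obtain $\sigma\sim\min(N_1^{-2},1)$ by a continuity argument, then control $\pi_\loc$ via Calder\'on--Zygmund and parabolic interpolation under $\tfrac2s+\tfrac3q=3$, and $\pi_\far$ via the pointwise kernel-difference bound. (One harmless arithmetic slip: the exponent identity should read $2(1-\theta)+\tfrac2s=2$, equivalently $(1-\theta)+\tfrac1s=1$, not $(1-\theta)+\tfrac2s=2$; the conclusion $\norm{v}_{L^{2s}_tL^{2q}_x}^2\lesssim C_0 N_1$ is unaffected.)
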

Note that both estaimtes are stated as the dimension free form in the sense of \cite{CKN}.
Instead of \eqref{p.apriorilocal}, the following is given in \cite[(3.6)]{JiaSverak},
\begin{equation}\label{p.apriorilocal2}
 \sup_{x_0\in \RR^3}   \frac1{r^2}\int_0^{\sigma r^2}\int_{B_r(x_0)} |\pi-c_{x_0,r}(t)|^{3/2}\,dx\,dt <C N_r^{3/2},
\end{equation}
which is a consequence of \eqref{p.apriorilocal} by H\"older inequality.
 In \cite{KiSe} and \cite{JiaSverak13,JiaSverak}, the estimate of $\norm{\pi-c_{x_0,r}(t)}$ is only in $L^{3/2} (B_r(x_0)\times (0,T))$. This is however not sufficient for our purpose:  We need the exponent for time integration to be larger than $3/2$ for the application to Theorem \ref{th1.1}.  In fact, that $p \in L^{5/3}_{t,x,\loc}$ seems to be implicitly used in the proof of \cite[Theorem 3.1]{JiaSverak}, as explained below \cite[(3.3)]{Tsai-DSSI}. In the regularity theory for \eqref{NS}, one can often improve the spatial regularity but not the temporal regularity. Hence it is advantageous to start with a higher exponent for the time integrability.

\medskip

We end this section with summarizing other fundamental properties proved in \cite{KiSe}
for the sake of completeness.
\begin{lemma}[\cite{KiSe}, Theorem 1.4] \label{th3.6}
Suppose  $(v,\pi)$ is a local energy solution to \eqref{NS} with divergence free initial data $v_0\in E^2$. Then
$v(t) \in E^2$ for all $t$,
$v(t) \in E^3$ for a.e.~$t$,
and
\EQ{
\lim_{t \to 0_+}\norm{v(t) - v_0} _{L^2_\uloc} =0.
}
\end{lemma}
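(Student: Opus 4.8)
\textbf{Proof proposal for Lemma \ref{th3.6}.}

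The plan is to exploit the decay estimate of Lemma \ref{th3.3}, which is the essential input. Fix $t>0$. To show $v(t)\in E^2$, recall $E^2$ consists of those $f\in L^2_\uloc$ with $\norm{f}_{L^2(B_1(x))}\to 0$ as $|x|\to\infty$. With the cut-off $\chi_R$ as in Lemma \ref{th3.3}, we have $\al_R(t)=\norm{\chi_R v(\cdot,t)}_{L^2_\uloc}^2$, and since $v_0\in E^2$, $\norm{\chi_R v_0}_{L^2_\uloc}^2 \to 0$ as $R\to\infty$ for fixed $R$-dependence; hence Lemma \ref{th3.3} gives, for a.e.~$t\in(0,T)$,
\[
\norm{\chi_R v(\cdot,t)}_{L^2_\uloc}^2 \le C(T,A)\bket{\norm{\chi_R v_0}_{L^2_\uloc}^2 + R^{-2/3}} \xrightarrow[R\to\infty]{} 0.
\]
For a fixed such $t$, given $\eta>0$ choose $R$ so the right side is $<\eta$; then for $|x_0|>2R$ we have $\chi_R\equiv 1$ on $B_1(x_0)$, so $\norm{v(\cdot,t)}_{L^2(B_1(x_0))}^2 = \norm{\chi_R v(\cdot,t)}_{L^2(B_1(x_0))}^2 < \eta$. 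Thus $v(t)\in E^2$ for a.e.~$t$. To upgrade ``a.e.~$t$'' to ``all $t$'', I would invoke the weak continuity property \eqref{weak-continuity} together with the uniform bound \eqref{uniform-energy}: for any fixed $t$, pick $t_k\to t$ with $v(t_k)\in E^2$; the sequence is bounded in $L^2_\uloc$, and weak-$*$ convergence in $L^2(B_1(x_0))$ for each ball (from \eqref{weak-continuity} applied to $w$ supported in $B_1(x_0)$) gives $\norm{v(t)}_{L^2(B_1(x_0))}\le \liminf_k \norm{v(t_k)}_{L^2(B_1(x_0))}$; combining with the uniform-in-$k$, uniform-in-$x_0$ smallness for $|x_0|$ large (which follows because the bound from Lemma \ref{th3.3} holds for every $t_k$ with a constant depending only on $T$ and $A$, not on $k$) yields $v(t)\in E^2$.

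For the $E^3$ statement for a.e.~$t$: by Lemma \ref{th3.3} the quantity $\ga_R^{2/3}(T)=\bke{\sup_{x_0}\int_0^T\int_{B_1(x_0)}|\chi_R v|^3\,dx\,dt}^{2/3}$ is bounded by $C(T,A)\bket{\norm{\chi_R v_0}_{L^2_\uloc}^2 + R^{-2/3}}$, which tends to $0$ as $R\to\infty$. Hence $t\mapsto \sup_{x_0}\int_{B_1(x_0)}|\chi_R v(\cdot,t)|^3\,dx \in L^1(0,T)$ with $L^1$-norm $\to 0$. By a diagonal/subsequence argument (extract $R_j\to\infty$ along which the $L^1(0,T)$ norms are summable), for a.e.~$t\in(0,T)$ one has $\sup_{x_0}\int_{B_1(x_0)}|\chi_{R_j}v(\cdot,t)|^3\,dx \to 0$ as $j\to\infty$; for such $t$, arguing as above (for $|x_0|>2R_j$, $\chi_{R_j}\equiv 1$ on $B_1(x_0)$) shows $\norm{v(\cdot,t)}_{L^3(B_1(x_0))}\to 0$ as $|x_0|\to\infty$, i.e.~$v(t)\in E^3$. (That $v(t)\in L^3_\uloc$ for a.e.~$t$ is itself part of the same bound, since $\sup_{x_0}\int_0^T\int_{B_1(x_0)}|v|^3<\infty$ by taking $R=1$, or directly from $v\in L^\infty_t L^2_\uloc \cap L^2_t \dot H^1_\uloc$ and the Sobolev/interpolation inequality $\norm{v}_{L^3(B_1)}\lec \norm{v}_{L^2(B_1)}^{1/2}\norm{v}_{H^1(B_1)}^{1/2}$.)

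Finally, for $\lim_{t\to 0_+}\norm{v(t)-v_0}_{L^2_\uloc}=0$: property 3 of Definition \ref{def:localenergy} already gives $v(t)\to v_0$ in $L^2(K)$ for every compact $K$, so the local part is controlled; the issue is uniformity in the centre $x_0$ at spatial infinity. Here again Lemma \ref{th3.3} is the tool: for $|x_0|$ large, $\norm{v(t)-v_0}_{L^2(B_1(x_0))} \le \norm{\chi_R(v(t)-v_0)}_{L^2_\uloc} + \norm{(1-\chi_R)(v(t)-v_0)}_{L^2(B_1(x_0))}$; the second term vanishes once $|x_0|>2R$ provided we arrange $\chi_R\equiv 1$ there, so choose $R$ so that $C(T,A)\bket{\norm{\chi_R v_0}_{L^2_\uloc}^2 + R^{-2/3}} + \norm{\chi_R v_0}_{L^2_\uloc}^2 < \eta$, then estimate $\norm{\chi_R v(t)}_{L^2_\uloc}^2 \le C(T,A)\bket{\norm{\chi_R v_0}_{L^2_\uloc}^2+R^{-2/3}}$ uniformly for small $t$, and $\norm{\chi_R v_0}_{L^2_\uloc}^2$ is the fixed small remainder; this bounds $\limsup_{t\to0_+}\sup_{|x_0|>2R}\norm{v(t)-v_0}_{L^2(B_1(x_0))}$. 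For $|x_0|\le 2R$, cover $\bar B_{2R+1}$ by finitely many unit balls and use property 3. The main obstacle I anticipate is precisely this last step — combining the qualitative local convergence (property 3, no uniformity) with the quantitative tail decay (Lemma \ref{th3.3}, uniform but only an upper bound, not a difference estimate) to get convergence uniform in the centre; one must be careful that the constant $C(T,A)$ in Lemma \ref{th3.3} does not degenerate as $t\to0_+$, which it does not since it depends only on $T$ and on $A$, and $A$ is finite and $t$-independent. Everything else is a routine $\varepsilon$-argument.
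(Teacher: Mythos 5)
The paper does not actually prove Lemma \ref{th3.6} --- it is stated as a citation to \cite[Theorem 1.4]{KiSe}, prefaced by ``We end this section with summarizing other fundamental properties proved in \cite{KiSe} for the sake of completeness.'' So there is no ``paper's own proof'' to compare with; your proposal is an independent attempt.

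Your arguments for the two $L^2$ statements are essentially sound: using $\al_R(t)$ from Lemma \ref{th3.3} to get uniform tail smallness, upgrading from a.e.~$t$ to all $t$ via \eqref{weak-continuity} and weak lower semicontinuity on each unit ball, and splitting into $|x_0|\le 2R+1$ (controlled by property~3) and $|x_0|>2R+1$ (controlled by the tail bound) for the $t\to 0^+$ continuity all work, modulo cosmetic issues (you want $|x_0|>2R+1$, not $|x_0|>2R$, to ensure $\chi_R\equiv 1$ on all of $B_1(x_0)$, and the esssup in Lemma \ref{th3.3} should be handled over a countable sequence $R_j$).

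The $E^3$ step has a genuine gap. Lemma \ref{th3.3} gives control on
$\ga_R(T)=\sup_{x_0}\int_0^T\!\int_{B_1(x_0)}|\chi_R v|^3$, i.e.\ the supremum in $x_0$ is taken \emph{outside} the time integral. You claim that this bounds the $L^1(0,T)$-norm of $t\mapsto \sup_{x_0}\int_{B_1(x_0)}|\chi_R v(\cdot,t)|^3\,dx$, but the inequality between these two quantities goes the wrong way: $\int_0^T\sup_{x_0}(\cdot)\,dt \ge \sup_{x_0}\int_0^T(\cdot)\,dt$, and in general the left side can be large (or infinite) while the right is small. Consequently the subsequence extraction step does not follow, and the conclusion ``for a.e.~$t$, $\sup_{x_0}\int_{B_1(x_0)}|\chi_{R_j}v(t)|^3\to 0$'' is not justified. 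The same issue afflicts the parenthetical claim that $v(t)\in L^3_\uloc$ for a.e.~$t$ from the a priori bound $\sup_{x_0}\int_0^T\!\int_{B_1(x_0)}|v|^3<\infty$: the uniform-in-$x_0$ energy and enstrophy bounds in \eqref{uniform-energy} have the $\sup_{x_0}$ outside the time integral, so they do not directly give $\sup_{x_0}\int_{B_1(x_0)}|\nb v(t)|^2<\infty$ for a.e.~$t$, which is what one would need to run an interpolation/Sobolev argument uniformly in $x_0$. Proving $v(t)\in E^3$ for a.e.~$t$ really does require consulting the proof in \cite{KiSe}; the ingredients available to you here --- Lemma \ref{th3.3} and Definition \ref{def:localenergy} --- don't close it by a straightforward Chebyshev or diagonal argument.
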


\section{Perturbed Stokes system}
\label{S-Stokes}

The following interior result for the perturbed Stokes system
is similar to \cite[Lemma 2.2]{JiaSverak}.
Instead of H\"older continuity, we claim $L^q$-integrability for any finite $q$
under a weaker assumption for the perturbed term.
Recall $Q_r = B_r \times (-r^2,0)$.

\begin{proposition}\label{linear-pns}
For any $q \in [5,\infty)$, there is $\de_0=\de_0(q)>0$ such that the
following hold. For any $M> 0$, if $G\in L^5(Q_1;\R^{3\times
3})$ with $\norm{G}_{L^5(Q_1)} \le M$,
 $a \in L^5(Q_1)$ with $\div a=0$, %
and $\norm{a}_{L^5(Q_1)}\le \de_0$, $\xi \in \R^3$, $|\xi|\le 1$, $u
\in L^\oo L^2 \cap L^2 \dot{H}^1(Q_1)$, $p\in L^{3/2}(Q_1)$,
\[
\norm{u}_{L^3(Q_1)} + \norm{p}_{L^{3/2}(Q_1)} \le M
\]
solve the $a$-perturbed Stokes equations
\EQ{\label{PStokes}
u_t-\Delta u+(a+\xi)\cdot\nabla u+u\cdot\nabla a+{\rm div}\,G+\nabla
p=0,\qquad \div u=0, } in $Q_1$, then we have\snb{$u\in L^q \cap L^\infty L^{\frac{3q}5}(Q_{1/2})$ ?}
\[
u \in L^q(Q_{1/2}), \quad \norm{u}_{ L^q(Q_{1/2})} \le C(q)M.
\]
\end{proposition}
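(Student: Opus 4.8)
The plan is to run a bootstrapping (parabolic regularity) argument for the linear perturbed Stokes system \eqref{PStokes}, treating all the terms involving $a$, $\xi$, and $G$ as a forcing term, and iterating to push the integrability of $u$ up from $L^3$ to $L^q$. First I would fix a nested family of cylinders $Q_{1/2} \subset \cdots \subset Q_{\rho_j} \subset \cdots \subset Q_1$ and, on each step, localize: multiply $(u,p)$ by a smooth spatial-temporal cutoff $\phi$ supported in $Q_{\rho_j}$ and equal to $1$ on $Q_{\rho_{j+1}}$, so that $w=\phi u$ solves a nonhomogeneous Stokes system $w_t-\Delta w+\nabla(\phi p)=F$, $\div w = u\cdot\nabla\phi$, where $F$ collects $(\partial_t\phi-\Delta\phi)u$, lower-order terms from the cutoff, and the genuine forcing $-\phi[(a+\xi)\cdot\nabla u + u\cdot\nabla a + \div G]$. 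Using the known $L^s_tL^q_x$ estimates for the nonstationary Stokes system (maximal regularity / the Oseen kernel bounds, as in Solonnikov or Giga--Sohr, which are standard and may be quoted), together with the pressure being recovered from $w$ and $\div w$ via the (localized) Helmholtz projection, one gets a gain of integrability at each stage provided one can estimate $F$ in the appropriate negative-order or divergence-form space.

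The key estimates to control $F$ are the bilinear ones: $a\cdot\nabla u$ and $u\cdot\nabla a$ should be written in divergence form, $\div(a\otimes u)$ and $\div(u\otimes a)$ (using $\div a = 0$), so they are handled by the Stokes estimates at the level of $\div(\text{something in }L^r)$. By Hölder, if $u\in L^{p_j}(Q_{\rho_j})$ and $a\in L^5(Q_1)$ with small norm $\delta_0$, then $a\otimes u\in L^{r_j}$ with $\frac1{r_j}=\frac15+\frac1{p_j}$, and its norm is $\le\delta_0\|u\|_{L^{p_j}}$ — the smallness of $\delta_0$ is what lets the resulting term be absorbed rather than merely controlled. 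The term $\xi\cdot\nabla u = \div(\xi\otimes u)$ with $|\xi|\le1$ is handled the same way (no smallness needed — it is a bounded drift, and on the final bounded cylinder it contributes a constant times $\|u\|_{L^{p_j}}$). The term $\div G$ with $G\in L^5$ is already in the right form and feeds a fixed $L^5$-sized contribution at every stage. Iterating the Sobolev-type gain for the Stokes system (each step improves the exponent by a fixed amount determined by the parabolic scaling, roughly $\frac1{p_{j+1}} = \frac1{r_j} - \frac1{5}$ up to the dimensional gain, i.e.\ the gain is by a definite amount as long as $p_j$ stays below the threshold), after finitely many steps — the number depending only on $q$ — one reaches $L^q(Q_{1/2})$. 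One must also carry the pressure along: at each stage $p$ localized is estimated in $L^{s_j}_tL^{q_j}_x$ by the same Calderón--Zygmund bounds for the Helmholtz projection applied to the forcing and to $u\otimes(a+\xi)$, so the pressure never obstructs the iteration.

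Two technical points deserve care. First, the bootstrap must respect the \emph{parabolic} scaling and the mismatch between time and space integrability; the cleanest route is to work in mixed-norm spaces $L^{s}_tL^{q}_x$ and use the full strength of maximal $L^s$-$L^q$ regularity for the Stokes operator, tracking that $\frac2s+\frac3q$ decreases by a fixed amount at each step. Second — and this is what I expect to be the main obstacle — one needs the \emph{smallness} of $\|a\|_{L^5(Q_1)}$ to be used correctly: it must be small enough, depending on $q$ (hence $\delta_0=\delta_0(q)$), that the finitely many absorption steps each succeed, i.e.\ at every stage the coefficient in front of $\|u\|_{L^{p_j}(Q_{\rho_j})}$ coming from the $a$-terms is $\le\frac12$ times the constant produced by the Stokes estimate; since the number of iterations is determined by $q$, the required $\delta_0$ shrinks as $q$ grows, which is exactly the statement. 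The final bound $\|u\|_{L^q(Q_{1/2})}\le C(q)M$ then follows by chaining the $j$ estimates, the dependence on $M$ being linear because every estimate in the chain is linear in the data $(\|u\|_{L^3}+\|p\|_{L^{3/2}}, \|G\|_{L^5})$, all of which are $\le M$ (with $|\xi|\le1$ absorbed into universal constants). A minor nuisance is that $u$ only starts in $L^3(Q_1)$ with $L^\infty_tL^2_x\cap L^2_t\dot H^1_x$ control, so the very first step should use the energy-type information (Sobolev embedding $L^2_t\dot H^1_x\cap L^\infty_tL^2_x\hookrightarrow L^{10/3}_{t,x}$) to jump-start the iteration if $L^3$ alone is awkward; this is routine.
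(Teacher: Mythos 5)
Your overall plan---localize, treat the $a$- and $G$-terms as forcing, invoke Stokes regularity, absorb the small $a$-contribution, and iterate up to $L^q$---is the same strategy the paper follows. But there are two points where your sketch would run into real trouble, and the more serious one is the pressure.

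\textbf{The pressure cannot be iterated.} You write that ``the pressure never obstructs the iteration'' because it is recovered by Calder\'on--Zygmund applied to the forcing at each stage. The difficulty is that this only improves the \emph{spatial} exponent of $p$: its time integrability remains $L^{3/2}$ forever, because $p$ enters the pressure equation through $au$, and $a$ is only in $L^5_t$ while $u$ stays bounded in $L^\infty_t L^2_x$. Consequently, when you localize and the term $p\,\nabla\phi$ appears in the forcing, it lives in $L^{3/2}_t L^r_x$ and no better in time. If you place it in divergence form and estimate via the operator $\Phi_1 F=\int_0^t e^{(t-s)\Delta}\mathbb{P}\nabla\cdot F\,ds$, the admissible gain is only $\frac1q\ge\frac1m-\frac15$, and with $m\le 3/2$ the best you get is $q\le 15/7$, \emph{worse} than where you started. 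This is precisely what the paper is avoiding: in their Step 1 they split the localized forcing into a divergence-form part $F$ \emph{which excludes the pressure} and a zeroth-order part $f_0=p\,\nabla\chi+\cdots$ which is only $L^{3/2}$ in time but is fed through the different potential $\Phi_0 f_0=\int_0^t e^{(t-s)\Delta}\mathbb{P} f_0\,ds$, whose extra smoothing in space compensates for the fixed $L^{3/2}_t$ (Lemma~\ref{lem.potential-est}: $\Phi_0:L^{3/2}_tL^r_x\to L^q_{t,x}$ for $\frac1q\ge\frac3{5r}-\frac2{15}$). Their bootstrap then alternates between improving the spatial exponent of $p$ via the elliptic estimate and feeding the improved $f_0$ through $\Phi_0$. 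Without this separation, a naive mixed-norm bootstrap stalls at the first appearance of $p\,\nabla\phi$.

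\textbf{The divergence correction and $\partial_t\eta$.} Your localized $w=\phi u$ has $\div w=u\cdot\nabla\phi\ne 0$, which you note but do not resolve. The paper uses $w=\chi u-\nabla\eta$ with $\eta=(-\Delta)^{-1}(u\cdot\nabla\chi)$, so that $w$ is divergence-free; the point of taking the correction to be a Newtonian potential is that $\partial_t\nabla\eta=\nabla(\partial_t\eta)$ is a gradient and can be absorbed into the new pressure $\pi=p\chi+\partial_t\eta$, so that $\partial_t\eta$ never needs to be estimated. If you instead apply Stokes estimates with prescribed nonzero divergence, or correct by a Bogovskii-type operator, you would need compatible bounds on the time derivative of the correction, which are not available from the given data. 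This trick is not ``a minor nuisance''---it is what keeps the localized system within reach of the Duhamel estimates.

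Smaller remarks: the ``finitely many steps'' picture is tidier than what happens in the paper, which (after an initial energy bound obtained from the local energy inequality and the iteration lemma of Jia--\v Sver\'ak) really runs a Picard iteration on a space--time slab, partitioning the time interval and re-initializing at good times $t_n$ where $w(t_n)\in L^r\cap L^2$; and your observation that $\delta_0$ must depend on $q$ is correct, though it enters through the contraction constant in the bootstrap lemma rather than through a count of absorption steps. These are organizational differences and would not by themselves sink the argument, but the treatment of the pressure term is a genuine gap.
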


\begin{proof}[Proof of Proposition \ref{linear-pns}]
\medskip

{\bf Step 1.} Initial bounds and localization.

\underline{Bounds}. Since this equation is linear, we may assume $M=1$. Because $a \in L^5$ and $u$ is in energy space, we can prove  local energy inequality for  the $a$-perturbed Stokes equations \eqref{PStokes}.
Fix $1-10^{-10}<\si <1$.
By the local energy inequality for \eqref{PStokes}, a calculation similar to that in \cite[page 242]{JiaSverak} shows that, for $\si<r_1<r_2<1$,
\[
E(r_1)\le
\frac{C}{(r_2-r_1)^2}+(C\norm{a }_{L^5(Q_1)}+\frac{1}{2})E (r_2),
\]
where
\[
E(r)={\rm
ess}\sup_{-r^2<t<0}\int_{B_r}\frac{\abs{u }^2}{2}dx+\int_{-r^2}^0\int_{B_r}\abs{\nabla
u }^2 dxdt.
\]
By Lemma \ref{JSlemma}, if $\norm{a }_{L^5(Q_1)}$ is sufficiently small, we have $E(\si)<C$, i.e.,
\[
\norm{u}_{L^\oo L^2 \cap L^2 \dot{H}^1(Q_\si)}\le C.
\]
Taking the divergence of \eqref{PStokes}, we get
 $p$-equation
 \[
 -\De p = \pd_i \pd_j \td G_{ij}, \quad \td G_{ij}=(a+\xi)_i u_j+u_i a_j + G_{ij}.
\]
Note that
\[
\norm{\td G}_{L^{3/2}L^{18/7}(Q_1)}
\lec (\norm{a}_{L^5} +|\xi|)\norm{u}_{L^{15/7}L^{90/17}} + \norm{G}_{L^{5}}\le C.
\]
Hence by the elliptic estimate we have %
\EQ{\label{pau.est}
\norm{p}_{L^{3/2}((-\si^2,0); L^{18/7} (B_{\si^2}))} \le \norm{\td G_{ij} }_{L^{3/2} L^{18/7} (Q_{\si})} + \norm{p}_{L^{3/2} (Q_{1})}
\le C.
}

\underline{Localization of $a$.}  By the Bogovski map,  we can solve $\tilde a: \R^3\times (-1,0)\to \R^3$ such that
\[
\div \tilde a=0, \quad \tilde a(x,t)=a(x,t) \quad \text{if } |x|<\si, \quad
a(x,t)=0 \quad \text{if } |x|>1,
\]
\[
\norm{\tilde a}_{L^5(\R^3\times (-1,0))} \le C \norm{a}_{L^5(Q_1)},
\]
for a constant $C$.

\underline{Localization of $u$.}
 Choose $\chi_0\in C^\infty_c(\R^3)$, radial, $\chi_0\ge 0$, $\chi_0=1$ on $B_{\si}$, $\chi_0=0$ on $B_1^c$. Let $\chi_k(x) = \chi_0(\si^{-k} x)$. Let $\chi = \chi_2$.
Let
\[
w=u \chi - \nb \eta, \quad
\pi = p \chi  + \pd_t \eta,
\]
where $\eta$ is the function which satisfies $\De \eta = u \cdot \nb \chi$ and it is given by
\[
\eta(x,t) = \int \frac 1{4\pi|x-y|}( u \cdot \nb \chi)(y,t)\,dy.
\]
Since $u \cdot \nb \chi$ is supported in $Q_1$,  the Calderon-Zygmund estimate shows
\[
\norm{\nb^2 \eta}_{L^{10}_t((-\si^2 ,0); (L^{30/13}_x\cap L^{6/5}_x)(\R^3))} \le C \norm{u}_{L^{10}_t (L^{30/13}_x\cap L^{6/5}_x)(Q_\si)}
\le C.
\]
By the Sobolev embedding and Riesz potential estimates, we have
\EQ{\label{Deta.est}
\norm{\nb \eta}_{L^{10}_t((-\si^2 ,0); L^{10}_x \cap L^2_x(\R^3))}
&\le C,
\\
\norm{\nb \eta}_{L^{\infty}_t((-\si^2 ,0); (L^{2}\cap L^6)(\R^3))}
& \le \norm{u}_{L^{\infty}_t(L^{6/5}\cap L^2)(Q_\si)}
\le C.
}
We also have
\EQ{\label{Deta.est2}
\norm{\nb^2 \eta}_{L^{10/3}((-\si^2 ,0)\times \R^3)} &\le  \norm{u}_{ L^{10/3}(Q_\si)} \le C
\\
\norm{\nb \eta}_{L^{10/3}_t((-\si^2 ,0); L^{\infty}(\R^3))}
&\le \norm{\nb^2 \eta}_{L^{10/3}((-\si^2 ,0)\times \R^3)} + \norm{\nb \eta}_{L^{10/3}_{t,x}}
\le C.
}

Therefore from \eqref{Deta.est}, we obtain
\EQ{
\norm{w}_{L^\oo L^2 \cap L^2 \dot{H}^1((-\si^2,0)\times \R^3)}
\le C.
}
Moreover, $w$ satisfies
\EQ{
\label{loc-PStokes}
w_t-\Delta w+(\tilde a+ \xi)\cdot\nabla w+w\cdot\nabla \tilde a+\nabla \pi=f_0+\nb F,\qquad \div w=0
\quad
\rm{in} \ \R^3 \times (-1,0),}
 \EQ{
 {\rm where} \  \qquad \qquad \qquad  f_0 & =u\Delta \chi
 +[(\tilde a+\xi)\cdot\nabla \chi] u
+[u\cdot\nabla \chi] \tilde a +p \nabla\chi +(\nb \chi)\cdot G, \quad \
\\[4pt]
F & = F_0 - \xi \otimes \nb \eta,
\\[4pt]
F_0 & =-2\nabla \chi \otimes u  -\nabla\eta\otimes \tilde a -\tilde
a \otimes \nabla \eta -\chi G.
}
Note that both $f_0$ and $F_0$ are localized. In terms of size,
\[
f_0 \approx  p + \tilde au + u+G, \quad F \approx u + \tilde a \nb
\eta + \nb \eta+ G.
\]
Recall that $ p$ and $ \tilde au$ are estimated by \eqref{pau.est}, and $\nb \eta$ by \eqref{Deta.est}.
Thus
\EQ{\label{f0F.est1}
\norm{f_0  &:\ L^{3/2} (-\si^2 ,0; (L^{18/7} \cap L^2)(\R^3))} \le C,
\\
\norm{F &:\ L^{10/3}(-\si^2 ,0; (L^{10/3} \cap L^2(\R^3))} \le C.
}

Note that $p$ appears in $f_0$, but not in $F_0$. This is very helpful because we cannot improve its time integrability.
Also note that the correction term $\nb \eta$ in the definition of $w$ is defined by the nonlocal Newtonian potential, which enables us to hide its time derivative  $\pd_t \eta$ in $\pi$, so that we don't need to estimate $\pd_t \eta$. This technique has been used, for example, in \cite{KMT,LT}.

\medskip

{\bf Step 2.} A bootstrap lemma.

In this step we prove a bootstrap lemma to improve integrability.
We will use the following potential estimates.

\begin{lemma}
\label{lem.potential-est}
Let $Q= \R^3 \times I$, $I=(0,T)$, $0<T< \infty$. Let
\[
\Phi_0 f_0(t) = \int_0^t e^{(t-s)\De}P f_0(s)ds,
\quad
\Phi_1 F(t) = \int_0^t e^{(t-s)\De}P \nb\cdot F(s)ds,
\]
for $f_0 \in L^{3/2}(I; L^r(\R^3;\R^3))$ and $F \in L^{m}(\R^3 \times I; \R^{3\times3})$.
Here $P$ denotes the Helmholtz projection on $\R^3$.
We  have
\EQ{\label{Phi0f0.est}
\norm{\Phi_0 f_0}_{L^q(Q)} \lec T^{\frac{5}{2}(\frac{1}{q}-\frac{3}{5r}+\frac{2}{15})}\norm{f_0}_{L^{3/2}(I; L^r)}, \quad
\frac1q \ge \frac3{5r} - \frac2{15},
\quad 1 < r  \le q <\infty;
}
\EQ{\label{nablaPhi0f0.est}
\norm{\nabla\Phi_0 f_0}_{L^q(Q)} \lec T^{\frac{5}{2}(\frac{1}{q}-\frac{3}{5r}-\frac{1}{15})}\norm{f_0}_{L^{3/2}(I; L^r)}, \quad
\frac1q \ge \frac3{5r} + \frac1{15},
\quad 1 < r  \le q <\infty;
}
\EQ{\label{Phi1F.est}
\norm{\Phi_1 F}_{L^q(Q)} \lec T^{\frac{5}{2}(\frac{1}{q}-\frac{1}{m}+\frac{1}{5})}\norm{F}_{L^{m}(Q)}, \quad
\frac1q \ge \frac1{{m}} - \frac1{5},
\quad 1<m \le  q < \infty.
}
\end{lemma}
\begin{proof}
By the decay estimates of $e^{(t-s)\De}P \nb$,we have
\[
\norm{\Phi_1 F(t)}_{L^q_x} \lec \int_0^t |t-s|^{-\alpha }\norm{F(s)}_{L^{m}_x}ds,
\]
where $\alpha  = \frac 32(\frac 1m-\frac 1q)+\frac12 \in [\frac12,1)$, hence $0 \le \frac 1m-\frac 1q< \frac 13$.
By the Hardy-Littlewood-Sobolev inequality and $T<\infty$, we get $\norm{\Phi_1 F}_{L^q(Q)} \lec {T^{\frac{5}{2}(\frac{1}{q}-\frac{1}{m}+\frac{1}{5})}}\norm{F}_{L^{m}(Q)}$ if
\[
\frac 1q+1 \ge \alpha  + \frac 1m, \quad i.e.,\quad
\frac 1m-\frac 1q\le \frac 15.
\]
This shows \eqref{Phi1F.est}. Similarly,
\[
\norm{\Phi_0 f_0(t)}_{L^q_x} \lec \int_0^t |t-s|^{-\alpha }\norm{f_0(s)}_{L^{r}_x}ds,
\]
where $\alpha  = \frac 32(\frac 1r-\frac 1q) \in [0,1)$, hence $0 \le \frac 1r-\frac 1q< \frac 23$.
By the Hardy-Littlewood-Sobolev inequality and $T<\infty$, we get $\norm{\Phi_0 f_0}_{L^q(Q)} \lec {T^{\frac{5}{2}(\frac{1}{q}-\frac{3}{5r}+\frac{2}{15})}}\norm{f_0}_{L^{3/2}(I; L^r)}$ if
\[
\frac 1q+1 \ge \alpha  + \frac 23, \quad i.e.,\quad
\frac1q \ge \frac3{5r} - \frac2{15},
\]
which implies $\frac 1q>\frac 1r- \frac 23$.
This shows \eqref{Phi0f0.est}.
Since the estimate \eqref{nablaPhi0f0.est} is similar to estimates above, we skip its details.
\end{proof}

Next we show the bootstrap lemma.

\begin{lemma}
\label{lem.w}
Let $ 3\le r<\infty$, $2 \le r_0 \le 9/2$
 and $2 \le r_1 \le 5$. There are small $\e= \e(r,r_0,r_1)>0$ and $\tau_0=\tau_0(r,r_0,r_1)\in (0,1)$ such that the following hold.
Let $w\in L^\infty L^2 \cap L^2 H^1(Q)$ be a weak solution of  the perturbed Stokes system \eqref{loc-PStokes} in $Q=\R^3 \times I$, $I=(t_0,t_1)$, $t_0<t_1\le t_0 + \tau_0$. Assume that $w(t_0)\in L^r \cap L^2(\R^3)$,
$f_0  \in L^{3/2}(I;L^{r_0}\cap L^2(\R^3))$, $F \in L^{r_1}(I;L^{r_1}\cap L^2)$,
$|\xi|\le 1$,
$\div \td a=0$, and $\norm{\td a }_{L^5(Q)}\le \e$.
Let
\[
N:= \norm{w(t_0)}_{L^r \cap L^2} + \norm{f_0}_{L^{3/2}(I;L^{r_0}\cap L^2)}
+ \norm{F }_{L^{r_1}(I;L^{r_1}\cap L^2)} .
\]
Then $w\in X$ with
\[
 \norm{w}_{X} \le C(r,r_0,r_1)N, \quad X=L^{\infty}L^2\cap L^2\dot H^1\cap L^q_{t,x}(Q).
\]
Here $q=\min (\frac 53r, q_0,q_1)$, $q_0 =  (\frac3{5r_0} - \frac2{15})^{-1}$, $q_1 =
( \frac1{{r_1}} - \frac1{5})^{-1}$, $\max(r_0, r_1)\le q<\infty$.
\end{lemma}

\begin{proof} Without loss of generality, we may assume $t_0=0$. We may also assume $N=1$ since it is a linear equation.
We define a sequence of approximation solutions of \eqref{loc-PStokes} by
\[
w_{1} (t) = e^{t\De} w(0) +\Phi_1F(t)+\Phi_0f_0(t),
\]
\[
w_{k+1} (t) = w_1(t)- \Phi_1 (( \tilde
a + \xi) \otimes w_k +w_k \otimes  \tilde a) (t)\quad
k=1,2,\cdots
\]

Due to the energy inequality for the Stokes system and \eqref{f0F.est1}, we have
\[
\norm{e^{t\Delta} w(0)+\Phi_1F}_{L^\infty_tL^2_x \cap L^2_t\dot{H}^1_x}
\le
C\norm{w(0)}_{L^2}+\norm{F}_{L^2_{t,x}} \le C
\]
We observe by \eqref{nablaPhi0f0.est} of Lemma \ref{lem.potential-est} that
\[
\norm{\Phi_0 f_0}_{L^\infty_tL^2_x \cap L^2_t\dot{H}^1_x}
\le Ct_1^{\frac{1}{3}} \norm{f_0}_{L^{\frac{3}{2}}_tL^2_{x}}.
\]
Therefore,
\[
\norm{w_1}_{L^\infty_tL^2_x\cap L^2_t \dot{H}^1_x}
\le
C\norm{w(0)}_{L^2}
+C\norm{F}_{L^2_{x,t}}
+Ct_1^{\frac{1}{3}} \norm{f_0}_{L^{\frac{3}{2}}_tL^2_{x}}
\le
C.
\]
On the other hand, using the $L^q_{t,x}$ estimate of
$e^{t\Delta}w(0)$ (see Giga \cite{Giga}) and
Lemma \ref{lem.potential-est}, we also have
$\norm{w_1}_{L^q_{t,x}}\le C$, and hence
$ \|w_1\|_X \le K$ for some $K>0$.

We show that $w_{k}\in X$  with $\norm{w_{k}}_X \le 2K$
provided that $t_1$ is sufficiently small by induction.
Suppose that $w_k\in X$ with $\norm{w_k}_X \le 2K$
for some $k =0,1,2,\ldots$.
The energy inequality shows that
\begin{align*}
\|\Phi_1(  ( \tilde
a + \xi) \otimes w_k +w_k \otimes  \tilde a ) \|_{L^{\infty}L^2 \cap L^2\dot{H}^1}
&\lec
 \| (\tilde
a + \xi) \otimes w_k +w_k \otimes  \tilde a  \|_{L^2_{t,x}}
\\
&\lec
 \| \tilde
a \|_{L^5_{t,x}} \|w_k\|_{L^{10/3}} +
t_1^{\frac12}\|w_k \|_{L^\infty_t L^2_x}
\end{align*}
Next using Lemma  \ref{lem.potential-est}, we can see that
\[
\norm{\Phi_1(  ( \tilde
a + \xi) \otimes w_k +w_k \otimes  \tilde a )}_{L^q}
\le
C(\norm{\tilde a }_{L^5}+t_1^{\frac{1}{2}})\norm{w_k}_{L^q}.
\]
Thus if $t_0$ and $\|\tilde a\|_{L^5_{t,x}}$ are sufficiently small,
we have
$$
\norm{\Phi_1(  ( \tilde
a + \xi) \otimes w_k +w_k \otimes  \tilde a )}_{X}
\le \frac 12 K,
$$
which shows the uniform bound of $w_k$ in $X$ by $2K$.

Denoting $\delta_k w:=w_{k+1}-w_k$, we get
\[
\delta_k w  =- \Phi_1 \bke{ ( \tilde a +
\xi) \otimes \delta_{k-1} w+\delta_{k-1} w \otimes  \tilde a},\quad k=1,2,\cdots.
\]
Following the estimates above, we see that
\[
\norm{\delta_k w (t)}_{X}\le C\bke{\norm{\tilde
a}_{L^5_{t,x}}+t_1^{1/2}}\norm{\delta_{k-1}
w(s)}_{X}.
\]
Therefore, if $\norm{\tilde
a}_{L^5_{t,x}}$ is sufficiently small, and if $t_1>0$ is so small such that
\EQ{\label{tau-small}
C t_1^{1/2} \le 1/4,
}
then $w_k$ is a Cauchy
sequence
in $X$, and converges to a mild solution $\td w$ of \eqref{PStokes} in $X$ with $\td w(0)=w(0) \in L^2 \cap L^r(\R^3)$.
By the uniqueness of the weak solution of the perturbed Stokes system \eqref{loc-PStokes} in the energy class (which can be proved by energy estimate and Gronwall inequality, using $\tilde a \in L^5_{t,x}$),
we see that $w=\tilde{w}$. This completes the proof.
\end{proof}

\medskip

{\bf Step 3.} Intermediate bounds.

Let $r=6$, $r_0=18/7$, and $r_1=10/3$.
We have $f \in L^{3/2}(-7/8,0;L^{r_0}\cap L^2)$ and $F \in L^{r_1}(-7/8,0;L^{r_1}\cap L^2)$ by \eqref{f0F.est1}.
Choose $\tau=\min(\frac 1{8}, \tau_0)$, where $\tau_0=\tau_0(r=6,r_0=18/7,r_1=10/3)$
is decided by Lemma \ref{lem.w}. Note $\frac 53r = q_0 = q_1 = 10$. Thus we take $q=10$.

Let $a_n = - n\tau /2$, and $I_n = [a_n,a_{n-1}]$, $n\in \ZZ$.
Choose smallest integer $N$ so that $I_N \subset (-\frac 78,-\frac34)$.
Since $w$ is in the energy class, we have
\[
\int_{I_{n}} \bke{ \int_{\R^3} |\nb w|^2 +|w|^2 dx}\,dt \le C, \quad 1\le n \le N.
\]
Thus, there is $t_{n} \in I_{n}$ such that
\[
{\int_{\R^3} |\nb w(t_n)|^2 + |w(t_n)|^2 \,dx \le  \frac C{\tau/2}=C}, \quad 1\le n \le N.
\]
By Sobolev imbedding, $\int |w(t_n)|^6 \,dx \le C$.
By Lemma \ref{lem.w}, we have
\[
w \in L^{10} (J_n \times \R^3), \quad J_n = (t_n, t_n+\tau )\cap (-1,0), \quad 1\le n \le N.
\]
Since
\[
t_n+\tau \ge a_n+\tau = a_{n-2} \ge t_{n-1}, \quad 2\le n \le N,
\]
we have
\[
(-3/4,0) \subset J:=\bigcup _{n=1}^N J_n = (t_N,0),\quad t_N \le -3/4.
\]
Hence
\EQ{
\norm{w}_{L^{10}  (J \times \R^3)} \le C.
}
We now show $w \in L^\infty(J; L^{10/3})$. Indeed, for $t \in J$,
\EQN{
\norm{w(t)}_{L^{10/3}_x}
&\lec \norm{w(t_N)}_{L^6 \cap L^2} + \norm{\Phi_0f_0(t)}_{L^{10/3}_x}
+ \norm{\Phi_1 F(t)}_{L^{10/3}_x} + \norm{\Phi_1 F_1(t)}_{L^{10/3}_x}
}
where $F_1 = (\td a+\xi)\otimes w + w \otimes \td a$, and
 $\Phi_0$ and $\Phi_1$ are redefined with initial time $t_N$.
Note
\EQN{
\norm{\Phi_1 F_1(t)}_{L^{10/3}_x}
&\lec \int_{t_N}^ t |t-s|^{-1/2} \norm{ F_1(s)}_{L^{10/3}_x} ds
\\
&\lec  \norm{ F_1}_{L^{10/3}_{t,x} (J \times \R^3)}
\\
&\lec (1+\norm{a}_{L^5_{t,x}}) \norm{w}_{L^{10} \cap L^{10/3} (J\times \R^3)}.
}
Similarly,
\EQN{
\norm{\Phi_0 f_0(t)}_{L^{10/3}_x}
&\lec  \norm{f_0}_{L^{3/2} L^{18/7} }
\\
\norm{\Phi_1 F(t)}_{L^{10/3}_x}
&\lec  \norm{ F}_{L^{10/3}_{t,x} }.
}
Thus
\[
\norm{w}_{L^{\infty}  (J;\ L^{10/3}(\R^3))} \le C.
\]

We also claim $\nb w \in L^{10/3}  (J\times \R^3)$ and $w \in L^{10/3}_t(J; L^\infty_x)$. Indeed,
\[
\norm{\nb w}_{L^{\frac{10}3}  (J\times \R^3)} \le \norm{\nb e^{(t-t_N)\De} w(t_N)}_{L^{\frac{10}3}  (J\times \R^3)}
+ \norm{\nb \Phi_0 f_0}_{L^{\frac{10}3}  (J\times \R^3)}  + \norm{\nb \Phi_1(F+F_1)}_{L^{\frac{10}3}  (J\times \R^3)} .
\]
By energy estimate,
\[
 \norm{\nb e^{(t-t_N)\De} w(t_N)}_{L^{\frac{10}3}  (J\times \R^3)}
 \le C\norm{\nb  w(t_N)}_{L^2  ( \R^3)} \le C.
\]
By \eqref{nablaPhi0f0.est} of Lemma \ref{lem.potential-est},
\EQN{
\norm{\nb \Phi_0 f_0(t)}_{L^{10/3}_x}
\lec  \norm{f_0}_{L^{3/2} L^{18/7} }.
}
By maximal regularity in $L^{10/3}_{t,x}$,
\[
\norm{\nb \Phi_1(F+F_1)}_{L^{\frac{10}3}  (J\times \R^3)} \le C\norm{F+F_1}_{L^{\frac{10}3}  (J\times \R^3)}\le C.
\]
Thus
$
\norm{\nb w}_{L^{10/3}  (J\times \R^3)} \le C$,
and by Sobolev inequality,
\[
\norm{w}_{L^{\frac{10}3}  (J;\ L^{\infty}(\R^3))} \lec  \norm{\nb w}_{L^{\frac{10}3}  (J \times \R^3)} +
\norm{w}_{L^{\frac{10}3}  (J \times \R^3)} \le C.
\]

By $w=u \chi - \nb \eta$, $\chi = 1$ on $B_{\si^3}$, and $\nb \eta$ estimates in \eqref{Deta.est}--\eqref{Deta.est2}, we have
\EQ{
\label{new-u-est}
\norm{u}_{L^{10}_{t,x} \cap L^{\infty} _t  L^{10/3}_x  \cap L^{10/3} _t L^{\infty} _x   ((-\frac34,0) \times B_{\si^3})}
+ \norm{\nb u}_{L^{10/3}_{t,x}  ((-\frac34,0) \times B_{\si^3})}
 \le C.
}
Note
\[
\norm{a u}_{L^{3/2}(-\frac34 ,0;\ L^5(B_{\si^3}))}\le
\norm{ a }_{L^{5}(-\frac34 ,0;\ L^5(B_{\si^3}))} \norm{ u}_{L^{15/7}(-\frac34 ,0;\ L^\infty(B_{\si^3}))}\le
C.
\]
From elliptic estimate \eqref{pau.est} again with spatial exponent $5$,
we get
\EQ{
\label{new-p-est}
\norm{p}_{L^{3/2}(-\frac34 ,0;\  L^5(B_{\si^4}))} \le
\norm{au}_{L^{3/2} L^5} + \norm{u}_{L^{3/2} L^5} + \norm{G}_{L^{3/2} L^5}+\norm{p}_{L^{3/2} L^{3/2}}
\le
C.
}
Above,
the last 4 norms are taken over $(-\frac34 ,0) \times B_{\si^3}$.

\medskip

{\bf Step 4.} Refined bounds.

We repeat the localization and define again $w=u \chi - \nb \eta$, but with $\chi$ replaced by $\chi_4(x)=\chi_0(\si^{-4} x)$.
We now have better estimates than those in Step 1 because of \eqref{new-u-est} and \eqref{new-p-est}. We first have
\[
\norm{\nb^2 \eta}_{L^{10}_t(-\frac34 ,0;\ L^{6/5}_x\cap L^{10}_x(\R^3))} \le C,
\]
\[
\norm{\nb \eta}_{L^{10}_t(-\frac34 ,0;\ L^{2}_x \cap L^\infty_x(\R^3))}\le C,
\]
and thus
\[
\norm{w}_{L^{10}_{t,x}  \cap L^\infty_t(L^2 \cap L^{\frac {10}3}) \cap L^{\frac{10}3}_t L^\infty ((-\frac34 ,0) \times \R^3)}
+ \norm{\nb w}_{ L^{\frac {10}3}_t(L^2 \cap L^{\frac {10}3})  ((-\frac34 ,0) \times \R^3)}
 \le C.
\]
Thanks to $\nb \eta \in L^\infty_{t,x}$,
we have
\[
\norm{F}_{L^5_{t,x}((-\frac34 ,0) \times \R^3)}\le C
\norm{|\td a|+|u|+|G|}_{L^5_{t,x}((-\frac34 ,0) \times B_{\si^4})}
+ \norm{\nb \eta}_{L^5_{t,x}((-\frac34 ,0) \times \R^3)}\ \le C.
\]
\[
\norm{f_0}_{L^{3/2}(-\frac34 ,0;\  L^5(B_{\si^4}))}\le C
\norm{|p|+|u|+|au|+|G|}_{L^{3/2}(-\frac34 ,0;\  L^5(B_{\si^4}))}\le C.
\]

For any $q \in (10,\infty)$, let $r=3q/5$, $r_0=9/2$, and $r_1=5$. Note $ q_0 = q_1 = \infty$, and $w \in L^{10/3}(-\frac34 ,0;\  L^r(\R^3))$.
Choose $\tau=\min(\frac 1{8}, \tau_0)$, where $\tau_0=\tau_0(r,r_0=9/2,r_1=5)$
is decided by Lemma \ref{lem.w}.

By the same argument as in Step 3, we may use time interval partition to get $u(t_n) \in L^2\cap L^r(\R^3)$, and Lemma \ref{lem.w} to get $w \in L^q_{t,x} ((-1/2,0) \times \R^3)$.

Since $w = u \chi_4 - \nb \eta$ and $\nb \eta \in L^\infty_t( L^2 \cap L^\infty_x)$,
we have, if $1/2 \le \si^5$,
\[
u \in L^q_{t,x}((-1/2,0) \times B_{1/2})).
\]
This finishes the proof of Proposition \ref{linear-pns}.
\end{proof}

\begin{remark*}
We can repeat the second part of Step 3 and prove, e.g., $w \in L^{\infty} L^q$ for any finite $q$ if $\norm{a}_{L^5_{t,x}}\le \de(q)$ is sufficiently small.
\end{remark*}

\section{Local analysis for the Navier-Stokes equations} %
\label{S4}

In this section we prove Theorem \ref{th1.1}. The proof is split into 3 subsections.

\subsection{Decay estimates for Navier-Stokes}

Let $(u,p)$ be a suitable weak solution of the following $a$-perturbed
Navier-Stokes equations in $Q=B_1\times (0,T)$, with $a \in L^5(Q)$, $\div a=0$,
\EQ{\label{PNS}
u_t-\Delta u+(a+u)\cdot \nabla u+u\cdot \nabla a+\nabla p=0,\qquad
\div u=0.
}
That is, $u\in L^\infty L^2(Q) \cap L^2 \dot H^1(Q)$, $p \in L^{3/2}(Q)$, the pair solves \eqref{PNS} in the distributional sense, and
 satisfies the \emph{perturbed local energy inequality}: For all non-negative $\phi\in C_c^\infty (Q)$, we have
\EQ{\label{PNS-LEI}
&\int |u|^2\phi(t) \,dx +2\int_0^t\!\! \int |\nabla u|^2\phi\,dx\,dt
\\&\leq
\int_0^t\!\! \int |u|^2(\partial_t \phi + \Delta\phi )\,dx\,dt +\int_0^t\!\!  \int \bke{(|u|^2(u+a)+2p u}\cdot \nabla\phi\,dx\,dt
\\& \quad + \int_0^t\!\! \int u_j a_i \pd_j( u_i \phi)\,dx\,dt.
}
This is equivalent to \eqref{CKN-LEI} for $v=u+a$ if $v$ is a weak solution of \eqref{NS} in $Q$ and $a$ is a strong solution of \eqref{NS}; see the argument after \eqref{bpib-small} for details.

Let $z=(x,t)$ and $Q_r(z)=B_r(x)\times (t-r^2,t)$. We denote
\EQ{\label{varphi.def}
\varphi (u, p,r,
z):=\bke{\frac{1}{r^2}\int_{Q_r(z)}\abs{u-(u)_{Q_r(z)}}^3}^{\frac{1}{3}}
+\bke{\frac{1}{r^2}\int_{Q_r(z)}\abs{p-(p)_{B_r(x)}(t)}^{3/2}}^{\frac{2}{3}}
}
where
\[
(u)_{Q_r(z)} = \frac 1{|Q_r(z)|} \int_{Q_r(z)} u,\qquad
(p)_{B_r(x)}(t) =  \frac 1{|B_r(x)|} \int_{B_r(x)} p(y,t)\,dy.
\]
Note that $\varphi$ is dimension-free in the sense of \cite{CKN}, and
its form is invariant under scaling.

\begin{lemma}[Decay estimate]\label{decay}
For any $\al \in (0,1)$, there is a small $\de_0>0$ such that the following hold.
Let $(u,p)$ be a suitable weak solution to the perturbed Navier-Stokes equations \eqref{PNS} in $Q_r(z)$, with
$a\in L^5(Q_r(z))$, $\div a=0$,
$\norm{a}_{L^5(Q_r(z))}=\de\le \de_0$. %
Denote $(u)_r = (u)_{Q_r(z)}$.
 Then, for any
$\theta\in (0, 1/3)$ there exist $\e=\e(\theta,{\al})>0$ and $C=C(\al)>0$ independent of $\theta$  %
such that if%
\snb{0902: the factor $r$ is added to make it dimension free and enable us to assume $r=1$.
Compare  \cite[(2.18)]{JiaSverak} where all terms are dimension $-1$.

Question: dependence between $\de$ and $\e$?
KK: I think that $\delta$ depends on $\alpha$ and so, in principle, 
$\e$ is independent of $\alpha$ for fixed $\alpha$. However, they are relevant each other indirectly 
via the value of $\alpha$. 
}
\begin{equation}\label{decay-1}
r|(u)_r| \le 1, \quad
\varphi(u,p,r, z)+{r}\abs{(u)_r}\de <\e,
\end{equation}
then
\begin{equation}\label{decay-5}
\theta r\abs{(u)_{\theta r}}\le 1,
\end{equation}
\begin{equation}\label{decay-10}
\varphi(u,p,\theta r, z)\le C\theta^{\alpha}\bkt{\varphi(u,p,r,
z)+{r}\abs{(u)_r}\de}.
\end{equation}
\end{lemma}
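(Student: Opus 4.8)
The plan is to prove Lemma \ref{decay} by a compactness/blow-up argument, following the scheme of Jia--\v Sver\'ak \cite[Lemma 2.2]{JiaSverak} but exploiting the improved interior estimate of Proposition \ref{linear-pns} so that the limiting object is governed by the linear perturbed Stokes system rather than a H\"older estimate for it. By scaling we may assume $r=1$ and $z=0$, so $Q_r(z)=Q_1$, and by subtracting the constant $(u)_{Q_1}$ and $(p)_{B_1}(t)$ we may work with mean-zero quantities; note the additional translation $\xi=(u)_{Q_1}$ (with $|\xi|\le 1$ by \eqref{decay-1}) enters the convective term of the Stokes limit, which is exactly why Proposition \ref{linear-pns} is stated with the drift $\xi$. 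Suppose the conclusion fails: then there exist $\al\in(0,1)$, $\th\in(0,1/3)$, and sequences $(u^{(k)},p^{(k)},a^{(k)})$ of suitable weak solutions of the $a^{(k)}$-perturbed Navier-Stokes equations on $Q_1$ with $\norm{a^{(k)}}_{L^5(Q_1)}=\de_k\to 0$ (or $\de_k\le\de_0$ fixed small), with $(u^{(k)})_{Q_1}=\xi_k$ bounded, $\ve_k:=\varphi(u^{(k)},p^{(k)},1,0)+|\xi_k|\de_k\to 0$, yet \eqref{decay-10} is violated for the pair $(\th,\al)$.

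Next I would normalize: set $U^{(k)}=(u^{(k)}-\xi_k)/\ve_k$ and $P^{(k)}=(p^{(k)}-(p^{(k)})_{B_1}(t))/\ve_k$, so that $\varphi(U^{(k)},P^{(k)},1,0)=1$. The equation for $(U^{(k)},P^{(k)})$ is the $a^{(k)}$-perturbed Stokes system \eqref{PStokes} with drift $\xi_k$ and with a forcing term $\div G^{(k)}$ where $G^{(k)}=\ve_k\, U^{(k)}\otimes U^{(k)}$ — the only genuinely nonlinear piece — plus the lower-order terms $a^{(k)}\cdot\nb U^{(k)}$ and $U^{(k)}\cdot\nb a^{(k)}$, which are small because $\norm{a^{(k)}}_{L^5}\to 0$. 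I would first establish uniform bounds: a Caccioppoli/energy estimate via the perturbed local energy inequality \eqref{PNS-LEI} (rescaled) combined with Lemma \ref{JSlemma} gives $\norm{U^{(k)}}_{L^\infty L^2\cap L^2\dot H^1(Q_{7/8})}\le C$ independent of $k$; the elliptic pressure estimate (taking $\div$ of the equation, as in \eqref{pau.est}) then bounds $P^{(k)}$ in $L^{3/2}L^{18/7}$ locally. Crucially, $\norm{G^{(k)}}_{L^5(Q_{7/8})}=\ve_k\norm{U^{(k)}}_{L^{10}}^2$ — but I do not yet know $U^{(k)}\in L^{10}$ uniformly; here I would bootstrap using Proposition \ref{linear-pns} applied to $(U^{(k)},P^{(k)})$ on a slightly smaller cylinder, treating $G^{(k)}$ with its current (lower-integrability) bound first, to upgrade $U^{(k)}$ to $L^q$ for large $q$ on $Q_{3/4}$, which in turn gives $\norm{G^{(k)}}_{L^5(Q_{3/4})}\le C\ve_k\to 0$. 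With that, Proposition \ref{linear-pns} yields a uniform bound $\norm{U^{(k)}}_{L^q(Q_{1/2})}\le C$ for any finite $q$, say $q=5$ suffices, together with the energy bounds.

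Then I would extract a limit: up to subsequences, $\xi_k\to\xi$ with $|\xi|\le1$, $a^{(k)}\to 0$ strongly in $L^5$, $U^{(k)}\wkto U$ weakly in $L^2\dot H^1\cap L^5$ and, by Aubin--Lions together with the uniform higher integrability, strongly in $L^3_{loc}$, and $P^{(k)}\to P$ weakly in $L^{3/2}_{loc}$. Passing to the limit in the weak formulation, the nonlinear and $a$-dependent terms vanish (the former because $G^{(k)}=\ve_k U^{(k)}\otimes U^{(k)}\to 0$ in $L^{5/2}_{loc}$, the latter because $\norm{a^{(k)}}_{L^5}\to 0$ while $\nb U^{(k)}, U^{(k)}$ are bounded), so $(U,P)$ solves the homogeneous Stokes system $U_t-\De U+\xi\cdot\nb U+\nb P=0$, $\div U=0$ on $Q_{1/2}$, with $\varphi$-type bounds inherited in the limit. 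For this constant-coefficient system, interior regularity (Stokes with a constant drift, which is the heat equation after the Galilean shift $x\mapsto x-\xi t$ on the divergence-free part, plus harmonic-in-$x$ pressure) gives the smooth decay estimate $\varphi(U,P,\th,0)\le C\th^{\al}\varphi(U,P,1/2,0)\le C\th$; absorbing constants and recalling $\varphi(U^{(k)},P^{(k)},\th,0)\to\varphi(U,P,\th,0)$ contradicts the assumed failure of \eqref{decay-10} once $C\th^\al$ beats the (fixed) constant, i.e.\ after fixing $\th$ small — wait, $\th$ is given, so instead the contradiction is obtained because the normalized quantity $\varphi(U^{(k)},P^{(k)},\th,0)/[\varphi(u^{(k)},p^{(k)},1,0)+|\xi_k|\de_k]$ would have to exceed $C\th^\al$ while converging to $\varphi(U,P,\th,0)\le C\th^\al$; taking $C$ to be the (universal) Stokes-decay constant closes the loop, and the excess factor is absorbed by choosing $\ve=\ve(\th,\al)$ small, which is precisely the quantifier structure in the statement. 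Finally, \eqref{decay-5} follows from \eqref{decay-10} and \eqref{decay-1} by a triangle-inequality comparison of the averages $(u)_{\th r}$ and $(u)_r$ together with the smallness of $\ve$. I expect the main obstacle to be the bootstrap step that makes $\norm{G^{(k)}}_{L^5}$ small uniformly in $k$: one must run Proposition \ref{linear-pns} on nested cylinders starting from merely the energy-plus-pressure bounds, carefully tracking that the $L^5$-norm of the perturbation $a^{(k)}$ is indeed below the threshold $\de_0(q)$ of that proposition at each stage, and that the gain of integrability does not degrade as $k\to\infty$.
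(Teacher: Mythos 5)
Your overall scheme (compactness/blow-up with the linear result, Proposition~\ref{linear-pns}, supplying interior regularity for the limiting equation) matches the paper's, but there are two substantive errors that would break the argument as written.

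\textbf{The missing term.} After normalizing $U^{(k)}=(u^{(k)}-\xi_k)/\ve_k$, the rescaled equation is
\[
\partial_t U^{(k)}-\Delta U^{(k)}+(\ve_k U^{(k)}+a^{(k)}+\xi_k)\cdot\nabla U^{(k)}+\Big(U^{(k)}+\tfrac{\xi_k}{\ve_k}\Big)\cdot\nabla a^{(k)}+\nabla P^{(k)}=0 ,
\]
and the term $\tfrac{\xi_k}{\ve_k}\cdot\nabla a^{(k)}=\div\!\big(\tfrac{\xi_k}{\ve_k}\otimes a^{(k)}\big)$ does \emph{not} vanish. It is exactly bounded in $L^5$ because $|\xi_k|\,\norm{a^{(k)}}_{L^5}\le \ve_k$, which is the very reason the quantity $r|(u)_r|\de$ appears in \eqref{decay-1} and \eqref{decay-10}, and the reason Proposition~\ref{linear-pns} carries the forcing $\div G$. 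Your proposal identifies $G^{(k)}$ with $\ve_k\,U^{(k)}\otimes U^{(k)}$ (the genuinely nonlinear piece, which indeed disappears) and never mentions the $\tfrac{\xi_k}{\ve_k}\otimes a^{(k)}$ piece; without it the compactness limit is wrong and the role of $|\xi|\de$ in the statement is unexplained.

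\textbf{The $a$-perturbation does not vanish.} You assert the terms $a^{(k)}\cdot\nabla U^{(k)}$ and $U^{(k)}\cdot\nabla a^{(k)}$ are ``small because $\norm{a^{(k)}}_{L^5}\to 0$.'' That is not given: $\norm{a^{(k)}}_{L^5}=\de_k\le\de_0$ with $\de_0$ a fixed small constant, and in the limit $a^{(k)}\rightharpoonup a$ with $a$ possibly nonzero. The limiting equation is the $a$-perturbed Stokes system with forcing $\div G$, which is precisely the setting of Proposition~\ref{linear-pns}; if $a$ vanished one would only need interior regularity for the pure Stokes operator and the whole point of the perturbed proposition would evaporate. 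Related to this, your bootstrap to put $U^{(k)}$ into $L^{10}$ uniformly so that $\norm{G^{(k)}}_{L^5}\to 0$ is both circular (you need $G^{(k)}\in L^5$ to invoke Proposition~\ref{linear-pns} in the first place) and unnecessary: passing to the limit one only needs $\ve_k\,U^{(k)}\otimes U^{(k)}\to 0$ in, say, $L^{5/3}_{\rm loc}$, which already follows from the uniform energy bound via $U^{(k)}\in L^{10/3}$. The improved integrability of Proposition~\ref{linear-pns} is applied to the \emph{limit} $(U,P)$ to get $U\in L^q(Q_{1/2})$ with $q$ chosen so that $\al<1-5/q$; it is not used to upgrade the approximating sequence. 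Finally, a complete proof also has to split the limiting pressure into a Riesz-potential part (controlled by the $L^l$ bound of $U$, $1/l=1/q+1/5$) and a harmonic-in-$x$ part (with its own decay), which is glossed over in the proposal.
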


\begin{proof}
Choose $q\in (5,\infty)$ such that $\al < 1- \frac 5q$. Choose $\de_0=\de_0(q(\al))$ according to Proposition \ref{linear-pns}.
Since $\varphi$ and $r(u)_r$ are dimension-free, we may assume $r=1$. We may assume
$z=0$ and skip the $z$-dependence in $\varphi$ without loss of generality.
We first show \eqref{decay-5}. Indeed,
\EQ{\label{Btheta-est}
\theta |(u)_{\theta }|
&\le \theta  |(u-(u)_1)_{\theta }|
+\theta  |(u)_1|
\\
&\le \theta  |Q_{\th }|^{-\frac13} \norm{u-(u)_1}_{L^3{(Q_{\th }) }} + \theta
\\
&\le C_3 \theta^{-\frac{2}{3}}\varphi(1)+\theta ,
}
with $C_3= |Q_1|^{-\frac13}$.
By \eqref{decay-1}, $\varphi(1) \le \e$, hence
$
\theta |(u)_{\theta }|   <1
$
if
\EQ{\label{eps.small}
\e\le \theta^{2/3}/2C_3.
}

Next we show the decay estimate \eqref{decay-10}. Here we use a
contradiction argument, following a similar argument as given in
e.g.~\cite[Lemma 3.2]{Lin} and \cite[Lemma 2.3]{JiaSverak}. Since some modification is
required, we give the details for completeness.  Suppose that this
is not the case. Then there exist solutions $(u_i, p_i)$ of \eqref{PNS}, $a_i$ and $\e_i$ with
$\lim_{i\rightarrow \infty}\e_i=0$ such that
\[
\xi_i = (u_i)_{Q_1}, \quad
|\xi_i| \le 1,\quad \norm{a_i}_{L^5(Q_1)}\le
\de_0,\quad\div a_i=0,
\]
\[
\varphi(u_i,p_i,1)+|\xi_i|\norm{a_i}_{L^5(Q_{1})}
=\e_i,
\]
\[
\varphi(u_i,p_i,\theta)\ge
C_2\theta^{\alpha} \e_i.
\]
{Here $C_2>0$ is a large constant to be chosen later.}
Setting $v_i=(u_i-(u_i)_1)/\e_i$ and
$q_i=(p_i-(p_i)_1(t))/\e_i$, it follows that
\[
\norm{v_i}_{L^3(Q_1)}+\norm{q_i}_{L^{\frac{3}{2}}(Q_1)}+\frac{|\xi_i|}{\e_i}\norm{a_i}_{L^5(Q_1)}=1,
\]
\EQ{\label{viqi-est}
\bke{\frac{1}{\theta^2}\int_{Q_{\theta}}\abs{v_i-(v_i)_{Q_{\theta}}}^3}^{\frac{1}{3}}
+\bke{\frac{1}{\theta^2}\int_{Q_{\theta}}\abs{q_i-(q_i)_{B_{\theta}}(t)}^{\frac32}}^{\frac{2}{3}}\ge
C_2\theta^{\alpha}
}
and $(v_i, q_i)$ satisfies
\[
\partial_t v_i-\Delta v_i+ (\e_i v_i + a_i + \xi_i) \cdot \nabla v_i + \bke{ v_i  +\frac{\xi_i}{\e_i}}
\cdot\nabla a_i +\nabla q_i=0,\quad \div v_i=0.
\]
Denote
\[
E_i(r)={\rm
ess}\sup_{-r^2<t<0}\int_{B_r}\frac{\abs{v_i}^2}{2}dx+\int_{-r^2}^0\int_{B_r}\abs{\nabla
v_i}^2 dxdt.
\]
By the local energy inequality for \eqref{PNS}, the calculation in \cite[page 242]{JiaSverak} shows that, for $3/4<r_1<r_2<1$,
\[
E_i(r_1)\le
\frac{C}{(r_2-r_1)^2}+(C\norm{a_i}_{L^5(Q_1)}+\frac{1}{2})E_i(r_2),
\]
By Lemma \ref{JSlemma}, if $\norm{a_i }_{L^5(Q_1)}\le \de_0$ is sufficiently small, we have $E_i(3/4)<C$ for all $i$.

By the uniform bound  $E_i(3/4)<C$ for all $i$, there
exist $(v,q)\in (L^3\times L^{3/2}) (Q_{3/4})$, $\xi \in \R^3$ and $a,G \in L^5(Q_{3/4})$ such that (if
necessary, subsequence can be taken)
\[
v_i \,\,\longrightarrow\,\, v \quad \text{strongly in
}\,\,L^3(Q_{3/4}),\qquad \xi_i \,\,\longrightarrow\,\, \xi,
\]
\[
q_i \,\,\longrightharpoonup \,\, q \quad \text{weakly in
}\,\,L^{\frac{3}{2}}(Q_{3/4}),\qquad a_i \,\,\longrightharpoonup\,\, a
\quad \text{weakly in }\,\,L^{5}(Q_{3/4}),
\]
\[
\frac{(u_i)_1}{\e_i}\otimes a_i \,\,\longrightharpoonup\,\,G \quad
\text{weakly in }\,\,L^{5}(Q_{3/4}),
\]
as $ i \to \infty$.
Furthermore, $(v, q)$ solves the linear perturbed Stokes system in $Q_{3/4}$
\[
\partial_t v-\Delta v+\xi\cdot \nabla v+a\cdot\nabla v+v\cdot\nabla a +\div G+\nabla q=0,\quad
\div v=0.
\]
Due to Proposition \ref{linear-pns}, it follows that $v\in L^q(Q_{1/2})$, $q>5$, for the exponent $q$ chosen at the beginning of the proof. Thus, by the strong convergence of $v_i$ to $v$ in
$L^3(Q_{3/4})$, we have for sufficiently large $i$
\EQ{\label{vi-est}
\bke{\frac{1}{\theta^2}\int_{Q_{\theta}}\abs{v_i-(v_i)_{\theta}}^3
dz}^{\frac{1}{3}}\le C\theta^{1-\frac{5}{q}}.
}
On the other hand, by the pressure equation, we decompose
$q_i=q^R_i+q^H_i$ such that
\[
q^R_i=(-\Delta)^{-1}{\rm div\, div}\bke{[\e v_i\otimes v_i+v_i\otimes
a_i+a_i\otimes v_i]\chi_{B_{\frac{3}{4}}}}.
\]
Here $\chi_{B_{\frac{3}{4}}}$ is the characteristic function of $B_{\frac{3}{4}}$.
We then see that $q^R_i$ converges strongly to $q^R$ in
$L^{\frac{3}{2}}(Q_{3/4})$, where $q^R$ is
\[
q^R=(-\Delta)^{-1}{\rm div\, div}\bke{[v\otimes a+a\otimes
v]\chi_{B_{\frac{3}{4}}}}.
\]
We note that $q^R\in L^l(Q_{1/2})$, where $1/l=1/q+1/5$. Therefore,
\[
\bke{\frac{1}{\theta^2}\int_{Q_{\theta}}\abs{q^R}^{\frac{3}{2}}dz}^{\frac{2}{3}}\le
C\theta^{2-\frac{5}{l}} = C\theta^{1-\frac{5}{q}} .
\]
Thus, for large $i$, we also have
\[
\bke{\frac{1}{\theta^2}\int_{Q_{\theta}}\abs{q^R_i}^{\frac{3}{2}}dz}^{\frac{2}{3}}\le
C\theta^{1-\frac{5}{q}}.
\]
Since $q^H_i$ is harmonic (in $x$) in $Q_{3/4}$, we see that
\[
\bke{\frac{1}{\theta^2}\int_{Q_{\theta}}\abs{q^H_i-(q^H_i)_{B_{\theta}}(t)}^\frac32}^{\frac{2}{3}}\le C
\theta^{\frac{5}{3}}.
\]
Adding up the above estimates,
\EQ{\label{qi-est}
\bke{\frac{1}{\theta^2}\int_{Q_{\theta}}\abs{q_i-(q_i)_{B_{\theta}}(t)}^\frac32}^{\frac{2}{3}}\le
C\theta^{1-\frac{5}{q}}.
}
The sum of  \eqref{vi-est} and \eqref{qi-est} contradicts \eqref{viqi-est} if
 we take $C_2$ sufficiently large. This completes the proof.
\end{proof}

\subsection{Regularity criterion for perturbed Navier-Stokes}

In this subsection we prove the following regularity criterion for perturbed Navier-Stokes equations \eqref{PNS}.
It is a extension of the result \cite[Theorem 2.2]{JiaSverak} for the perturbed term $a\in L^m(Q_1)$ with $m>5$. 

\begin{lemma}[Regularity criterion]\label{criterion-PNS}
For any fixed $\beta \in (0,1)$, there exist small $\e_1(\beta),\de(\beta)>0$
with the following properties:
Let $(u,p)$ be a suitable weak solution to the perturbed Navier-Stokes equations \eqref{PNS} in $Q_{3/4}$, with
$a\in L^5(Q_{3/4})$, $\div a=0$,
$\norm{a}_{L^5(Q_{3/4})}\le \de$, and
\EQ{\label{up-small}
 \int_{Q_{3/4}} |u|^3+\abs{p}^{\frac{3}{2}}\le
\epsilon_1.
}
Then we have
\begin{equation}\label{Morrey-10}
\sup_{z_0 =(x_0,t_0) \in Q_{\frac14}} \sup_{r<\frac{1}{4}}\frac{1}{r^{2+3\beta}}\int_{Q_r(z_0)}\abs{u}^3
+ \abs{p - (p)_{B_r(x_0)}(t)}^{3/2}
dz<C(\be).
\end{equation}
\end{lemma}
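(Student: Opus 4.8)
The plan is to run an iteration of the decay estimate (Lemma \ref{decay}) on a shrinking sequence of parabolic cylinders centered at an arbitrary point $z_0 \in Q_{1/4}$, and to combine the resulting geometric decay of $\varphi(u,p,\theta^k r, z_0)$ with control of the scale-invariant average $r|(u)_r|$ to obtain the Morrey-type bound \eqref{Morrey-10}. First I would fix the target exponent $\beta \in (0,1)$ and choose $\alpha \in (\beta,1)$, then pick $\theta \in (0,1/3)$ small enough that $C\theta^{\alpha} \le \tfrac12 \theta^{\beta}$, where $C = C(\alpha)$ is the constant from Lemma \ref{decay}. With $\theta$ fixed, let $\e = \e(\theta,\alpha)$ and $\de_0 = \de_0(\alpha)$ be the corresponding constants from Lemma \ref{decay}, and set $\de(\beta) = \de_0$. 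The remaining freedom is $\e_1$, which I would choose small enough (depending on $\beta$) that the smallness hypothesis \eqref{up-small} forces $\varphi(u,p,r_0,z_0) + r_0 |(u)_{r_0}| \de$ to be below $\e$ at the initial scale $r_0 = 1/2$, uniformly in $z_0 \in Q_{1/4}$: indeed, by Hölder $\varphi(u,p,1/2,z_0)^{?}$ is dominated by a power of $\int_{Q_{3/4}} |u|^3 + |p|^{3/2}$, and $r_0|(u)_{r_0}| \lesssim (\int_{Q_{3/4}}|u|^3)^{1/3}$, so both are small; note also $r_0 |(u)_{r_0}| \le 1$ then holds.

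Next I would set up the induction. Write $r_k = \theta^k r_0$ and $\Phi_k = \varphi(u,p,r_k,z_0) + r_k |(u)_{r_k}|\de$. The claim is that $\Phi_k \le \theta^{\beta k} \Phi_0$ and $r_k |(u)_{r_k}| \le 1$ for all $k \ge 0$. The base case is the previous paragraph. For the inductive step, assuming $\Phi_k < \e$ and $r_k|(u)_{r_k}| \le 1$, Lemma \ref{decay} (applied at scale $r_k$, which is legitimate since $a \in L^5(Q_{r_k}(z_0))$ with norm $\le \de_0$ by monotonicity of the $L^5$ norm in the domain) gives $\theta r_k |(u)_{\theta r_k}| \le 1$ and $\varphi(u,p,r_{k+1},z_0) \le C\theta^{\alpha}\Phi_k$. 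I also need to upgrade this to a bound on $\Phi_{k+1}$; for the $r_{k+1}|(u)_{r_{k+1}}|\de$ part I would use $r_{k+1}|(u)_{r_{k+1}}| \le \theta r_k |(u)_{r_k}| + C\theta^{-2/3}\varphi(u,p,r_k,z_0)$ — exactly the estimate \eqref{Btheta-est} in the proof of Lemma \ref{decay} — which, combined with $\varphi \le \Phi_k$, shows $\Phi_{k+1} \le C'\theta^{\alpha}\Phi_k \le \theta^{\beta}\Phi_k$ after possibly shrinking $\theta$ further (absorbing the extra constant using $\alpha > \beta$). Since $\Phi_{k+1} \le \theta^{\beta}\Phi_k \le \Phi_0 < \e$, the hypothesis of Lemma \ref{decay} persists and the induction closes.

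From the geometric decay $\varphi(u,p,r_k,z_0) \le \theta^{\beta k}\Phi_0$ I would interpolate to all radii $r \in (0,1/4)$: given such $r$, pick $k$ with $r_{k+1} \le r < r_k$, and use $\varphi(u,p,r,z_0) \le C(\theta)\varphi(u,p,r_k,z_0) \lesssim r_k^{\beta} \lesssim r^{\beta}$ (the first inequality because shrinking the cylinder by a bounded factor changes the normalized average by at most a constant, since $r_k \le r/\theta$). Unwinding the definition of $\varphi$ in \eqref{varphi.def}, this reads
\[
\frac{1}{r^2}\int_{Q_r(z_0)} |u - (u)_{Q_r(z_0)}|^3 \lesssim r^{3\beta}, \qquad
\Big(\frac{1}{r^2}\int_{Q_r(z_0)} |p - (p)_{B_r(x_0)}(t)|^{3/2}\Big)^{2/3} \lesssim r^{\beta}.
\]
To pass from the oscillation of $u$ to $u$ itself I would use the standard Campanato-type argument: the decay of oscillations shows $(u)_{Q_r(z_0)}$ is Cauchy as $r \to 0$ with $|(u)_{Q_r(z_0)} - (u)_{Q_{1/4}(z_0)}| \lesssim 1$, hence $u$ is bounded near $z_0$ and $\frac{1}{r^2}\int_{Q_r(z_0)}|u|^3 \lesssim \frac{1}{r^2}\int_{Q_r(z_0)}|u - (u)_{Q_r(z_0)}|^3 + r^3|(u)_{Q_r(z_0)}|^3 \lesssim r^{3\beta} + r^3$, which is $\lesssim r^{2+3\beta}$ only if — careful here — one keeps track that the power $r^{3\beta}$ already beats $r^{2+3\beta}$ and the $r^3|(u)_r|^3$ term is $\lesssim r^3 \le r^{2+3\beta}$ since $3\beta < 3$. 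For the pressure term one combines the oscillation decay directly. Taking the supremum over $z_0 \in Q_{1/4}$ and $r < 1/4$ yields \eqref{Morrey-10} with $C(\beta)$ depending on $\theta(\beta)$, $\Phi_0$, hence on $\beta$ alone.

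\textbf{Main obstacle.} The delicate point is the bookkeeping that makes the induction self-sustaining: one must verify that the full quantity $\Phi_k$ (not merely $\varphi$) decays geometrically, which requires feeding the estimate \eqref{Btheta-est} for $r|(u)_r|$ back in and choosing $\theta$ small enough — \emph{before} fixing $\e$ and $\de_0$ via Lemma \ref{decay} — so that the accumulated constants still leave room below the threshold $\e$. A secondary subtlety is checking that $\e_1(\beta)$ can indeed be chosen to make $\Phi_0 < \e$ uniformly over all $z_0 \in Q_{1/4}$ and that the perturbation $a$ retains its smallness on every sub-cylinder $Q_{r_k}(z_0)$; both are routine once one notes $Q_{r_k}(z_0) \subset Q_{3/4}$ and uses monotonicity of integrals, but they must be stated.
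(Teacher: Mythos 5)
Your plan — iterate the decay estimate (Lemma \ref{decay}) over geometrically shrinking scales with a combined quantity $\Phi_k = \varphi(r_k) + \delta\, r_k|(u)_{r_k}|$, keep Lemma \ref{decay}'s hypothesis valid by induction, and read off a Morrey bound at the end — is essentially the same strategy as the paper, which works with $\Psi(r) = \varphi(r) + (2C_3)^{-1}\theta^{2/3+\beta}B(r)$ where $B(r) = r|(u)_{Q_r}|$. However, two steps as you wrote them do not go through.

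First, setting $\de(\be)=\de_0(\al)$ does not close the induction, and ``shrinking $\theta$ further'' cannot repair it. When you bound $\Phi_{k+1}$ via \eqref{decay-10} and \eqref{Btheta-est}, the term $\de\, C_3\theta^{-2/3}\varphi(r_k)$ appears; it is \emph{not} of the form $C'\theta^\al\Phi_k$ with $C'$ independent of $\theta$, because its coefficient grows like $\theta^{-2/3}$ as $\theta\to 0$. Shrinking $\theta$ makes that term larger relative to $\theta^\be$, not smaller. What one must do instead (as the paper does) is shrink $\de$ \emph{after} $\theta$ is fixed, requiring $\de \le (2C_3)^{-1}\theta^{2/3+\be}$ in addition to $\de\le\de_0(\al)$; only then does $\Phi_{k+1}\le\theta^\be\Phi_k$ follow.

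Second, the Campanato step fails for $\be<1$, and it reflects a misreading of what \eqref{Morrey-10} asserts. The decay $\frac{1}{r^2}\int_{Q_r}|u-(u)_{Q_r}|^3\lesssim r^{3\be}$ is equivalent to $\frac{1}{|Q_r|}\int_{Q_r}|u-(u)_{Q_r}|^3\lesssim r^{3(\be-1)}$, a \emph{negative} exponent, so the telescoping bound $\sum_k|(u)_{Q_{r_{k+1}}}-(u)_{Q_{r_k}}|\lesssim\sum_k r_k^{\be-1}$ diverges. Thus $(u)_{Q_r}$ is not Cauchy as $r\to 0$, $u$ need not be bounded near $z_0$, and the conclusion is Morrey-type, not H\"older (this is precisely the content of the remark following Lemma \ref{criterion-PNS}). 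Your use of ``$r^3\le r^{2+3\be}$ since $3\be<3$'' is also backwards: that inequality requires $\be\le 1/3$. The correct finishing step is to extract the decay of $B(r)$ directly from $\Phi_k$ (or $\Psi$): $\Phi_k\lesssim r_k^\be$ gives $B(r)\lesssim r^\be$, and then $\frac{1}{r^2}\int_{Q_r}|u|^3\lesssim \varphi_1(r)^3+|Q_1|B(r)^3\lesssim r^{3\be}$, with no boundedness of $u$ and no Campanato iteration needed.
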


\begin{remark*}
Unlike \cite[Theorem 2.2]{JiaSverak} and \cite[Proposition 1]{CKN}, estimate \eqref{Morrey-10} does
not imply  H\"older continuity, but Morrey type regularity.
\end{remark*}

\begin{proof} For fixed $\be \in (0,1)$,
choose $\alpha=(1+\beta)/2$ so that $\alpha \in (\beta,1)$, and choose $\theta\in (0, 1/3)$ so that the factor $C\th^\alpha$ in \eqref{decay-10} is bounded by $\frac 12 \th^\beta$, and $\th^{1-\beta}< \frac 12$.

In the following we omit the dependence on $z_0\in Q_{1/4}$ to simplify the notation.

Let $B(r)=r\abs{(u)_{Q_{r}}}$ and $\varphi(r)$ be defined by \eqref{varphi.def}.
It is proved in \eqref{Btheta-est} for $r=1$ that
\begin{equation}\label{estimate-B}
B(\theta r)\le C_3\theta^{-\frac{2}{3}}\varphi(r)+\theta B(r),
\end{equation}
where $C_3=|Q_1|^{-1/3}$. The proof for general $r$ is the same.
Let
\[
\Psi(r)=\varphi(r)+(2C_3)^{-1}\theta^{\frac{2}{3}+\beta} B(r),
\]
where $C_3$ is the constant in \eqref{estimate-B}.
We want to show by induction that
\begin{equation}\label{estimate-psi}
\text{condition (\ref{decay-1}) is valid, and}
\quad
\Psi(\theta r)\le \theta^{\beta}\Psi(r),
\end{equation}
for $r \in I_k = [ \frac {\th^{k+1}}4 , \frac {\th^k} 4]$, for all $k \in \NN_0=\NN \cup \{ 0\}$.
Let
\[
\Psi_k = \sup_{z_0 \in Q_{1/4},\ r \in I_k} \Psi(r; z_0)
,\quad k \in \NN_0.
\]
By \eqref{up-small},
\[
\Psi_0 \le C(\be) \e_1^{1/3} \le \e
\]
 if $\e_1=\e_1(\be)$ is sufficiently small.
In particular, the condition \eqref{decay-1} is uniformly satisfied for every $z_0=(x_0,t_0)\in Q_{1/4}$ and $r \in I_0$.

Suppose that \eqref{estimate-psi} has been proved for $r \in \cup_{j<k}I_j$ and
condition \eqref{decay-1} is satisfied for $r\in I_k$ for some $k \in \NN_0$.
 By \eqref{decay-10} of Lemma \ref{decay} and \eqref{estimate-B},
\EQN{
\Psi(\theta r)&=\varphi(\theta
r)+(2C_3)^{-1}\theta^{\frac{2}{3}+\beta}B(\theta r)
\\
&\le
\frac{\theta^{\beta}}{2}\varphi(r)+\frac{\theta^{\beta}}{2}\delta
B(r)+\frac{\theta^{\beta}}{2}\varphi(r)+(2C_3)^{-1}\theta^{\frac{5}{3}+\beta}
B(r)
\\
&=\theta^{\beta}\varphi(r)+\theta^{\beta}\bke{C_3\delta\theta^{-\frac{2}{3}-\beta}+\theta^{1-\beta}
}(2C_3)^{-1}\theta^{\frac{2}{3}+\beta}B(r),
}
which is bounded by $ \theta^{\beta}\Psi(r)$ if $\delta\le \min\{\delta_0(\alpha),
(2C_3)^{-1}\theta^{\frac{2}{3}+\beta}\}$. This shows \eqref{estimate-psi} for $r\in I_k$.

As a result, $\Psi_{k+1} \le \th^\be \Psi_k \le \cdots \le \th^{(k+1)\be} \Psi_0 \le \th^{(k+1)\be} \e$. Hence
\[
r|(u)_r|=B(r)\le 2C_3 \th^{-\frac 23-\be} \Psi_{k+1} \le 2C_3 \th^{-\frac 23-\be} \th^{\be}\e \le 1
\]
by \eqref{eps.small},
\[
r|(u)_r| \de\le  1\cdot\de \le \e/2,
\]
and
\[
\varphi(u,p,r, z_0) \le  \Psi_{k+1} \le \th^{\be}\e \le \e/2
\]
for $r \in I_{k+1}$. That is, condition (\ref{decay-1}) is valid for $r \in I_{k+1}$.

By induction, we have shown \eqref{estimate-psi} for all $r\le 1/4$ and all $z_0 \in Q_{1/4}$.
In particular, if $r \in I_k$,
\[
\Psi(r,z_0) \le \Psi_k \le \th^{k\be} \e  \le C\e r^\be,
\]
which implies %
\eqref{Morrey-10}.
\end{proof}

\subsection{Proof of Theorem \ref{th1.1}}
\label{S5.3}

We now prove Theorem \ref{th1.1}.
Choose $\alpha=1/2$, $\beta=1/4$ and choose $\theta>0$ so small
that $\theta^{\alpha-\beta}$, $\theta^{1-\beta}$ and $\theta^{\beta}$
are sufficiently small in the proof of Lemma \ref{criterion-PNS}.

By Lemma \ref{th:localization},  there is $a_0 \in L^3(\R^3)$ with
\[
a_0 = v_0 \quad \text{in}\ B_{3/4}, \quad
a_0 = 0 \quad \text{in}\ B_{1}^c,\quad
\div a_0 = 0, \quad \norm{a_0}_{L^3}\le C(3,\frac34) \norm{v_0}_{L^3} \le \e_2,
\]
where $\e_2$ is the constant in Lemma \ref{L3mildsol}. By Lemma \ref{L3mildsol}, there is a unique mild solution $a$ of \eqref{NS} with zero force and initial data $a(0)=a_0$ that
satisfies \eqref{th2.1-1}. In particular,
\EQ{\label{a.est-1}
\norm{a}_ {L^5_{t,x}(\R^4_+)} \le C \e_2.
}
 Let $\pi_a$ be its corresponding pressure. We have
$\pi_a = R_iR_j a_i a_j$, and
\EQ{\label{a.est-2}
\norm{\pi_a}_{L^{5/2}_{t,x}(\R^4_+)}  \le C\norm{a}_{ L^5_{t,x}(\R^4_+)} ^2  \le C \e_2^2.
}
By the maximal regularity for the inhomogeneous Stokes system, we have
\EQ{\label{a.est-3}
\nb a \in L^{5/2}(\R^4_+), \quad  \nb \pi_a \in L^{5/3}(\R^4_+).
}

Let $b_0=v_0-a_0$, $b=v-a$, and $\pi_b = \pi - \pi_a$. Denote $T=T_1$.
Observe that $(b,\pi_b)$ is a weak solution of the $a$-perturbed Navier-Stokes
equations \eqref{PNS}
in $Q= B_1 \times (0,T)$,
with $b(x,0)=b_0(x)$, and
$b_0(x)=0$ in $B_{3/4}$.
We claim that $(b,\pi_b)$ satisfies  the perturbed
local energy inequality \eqref{PNS-LEI}.
This is also stated in \cite{JiaSverak} without detailed explanation.
Indeed, \eqref{PNS-LEI} and \eqref{CKN-LEI} for $v=a+b$ are equivalent because they differ by an equality which is the sum of the weak form of $a$-equation with $2v \phi$ as the test function and the weak form of $b$-equation with $2a \phi$ as the test function.
This equality can be proved because $a$ is a strong solution satisfying \eqref{a.est-1}--\eqref{a.est-3}.

By the interpolation, $ \norm{v}_{L^4_tL^3_x(Q)}\le C \norm{v}_{L^{\infty}_tL^2_x\cap L^2_tH^1_x (Q)}$.
Hence the assumption \eqref{th1.1-0} shows 
\[
\norm{v}_{L^{3}(Q)} \le C \norm{v}_{L^4_tL^3_x(Q)} T^{\frac1{12}}
\le C\sqrt M T^{\frac1{12}},
\]
and
\[
\norm{\pi}_{L^{3/2}(Q)} \le \norm{\pi}_{L^2_tL^{3/2}_x(Q)} T^{\frac1{6}}
\le C M T^{\frac1{6}}.
\]
Thus, if $T\le \ve^{12} M^{-6} $ with $\ve$ sufficiently small, we get
\EQ{\label{bpib-small}
\int_{0}^T \int_{B_1} \abs{b}^3+\abs{\pi_b}^{\frac{3}{2}}\le C\e + C\e^2 \le
\epsilon_1,
}
where $\epsilon_1$ is the small constant in \eqref{up-small} of Lemma \ref{criterion-PNS}.

Extend $a$, $b$, and $\pi_b$ by zero for $t<0$ and denote
$Q^T_r:=B_r\times (T-r^2,T)$. Using that $b_0(x)=0$ in $B_{3/4}$
and $\|a\|_{L^5}$ is sufficiently small, the standard energy estimate shows that $(b,\pi_b)$ is a suitable weak solution of \eqref{PNS} in $Q^T_{3/4}$ satisfying the perturbed
local energy inequality \eqref{PNS-LEI},
and $\frac34|(b)_{Q^T_{3/4}}|\le 1$. In particular, $(b,\pi_b)$ satisfies \eqref{PNS} across $t=0$ in the sense of distributions.

We can now apply Lemma \ref{criterion-PNS} to conclude
 that
\begin{equation*}%
\sup_{z_0 =(x_0,t_0) \in Q_{\frac14}^T} \sup_{r<\frac{1}{4}}\frac{1}{r^{2+3\beta}}\int_{Q_r(z_0)}\abs{b}^3
+ \abs{\pi_b - (\pi_b)_{B_r(x_0)}(t)}^{3/2}
dz<C.
\end{equation*}
Choose largest $r_1 \le 1/4$ such that $Cr_1^{3\be} \le \frac 12 \e_{\CKN}$. Hence
 \begin{equation*}%
\sup_{z_0 =(x_0,t_0) \in Q_{\frac14}^T} \sup_{r\le r_1}\frac{1}{r^{2}}\int_{Q_r(z_0)}\abs{b}^3
+ \abs{\pi_b - (\pi_b)_{B_r(x_0)}(t)}^{3/2}
dz< \frac 12 \e_{\CKN}.
\end{equation*}
We may take $T \le 4r_1^2$ as $r_1$ is an absolute constant. For $r \ge r_1$ we have
\[
\sup_{z_0  \in B_{\frac14}\times (0,T)} \sup_{r\ge r_1}\frac{1}{r^{2}}\int_{Q_r(z_0) \cap Q}\abs{b}^3
dz< \frac 1{r_1^2} C\e < \frac 12.
\]
 Since $v=a+b$ and $a\in L^5(\R^4)$ small, we have
 \begin{equation}%
\sup_{z_0  \in B_{\frac14}\times (0,T)} \sup_{0<r<\infty}\frac{1}{r^{2}}\int_{Q_r(z_0) \cap Q}\abs{v}^3
dz< 1.
\end{equation}

Now for any $z_0=(x_0,t_0) \in B_{1/4} \times (0,T)$, take $r=\frac 12 \sqrt{t_0}$. We have $r \le r_1$ and
\[
r^2 <t < 4r^2
 \quad \text{if } (x,t) \in Q_r(z_0).
\]
For this $r$,
let
\[
\td \pi = \pi_a + \pi_b - (\pi_b)_{B_r(x_0)}(t).
\]
We have
\begin{equation*}%
\frac{1}{r^{2}}\int_{Q_r(z_0)}\abs{v}^3
+ \abs{\td \pi}^{3/2}
dz<  \e_{\CKN}
\end{equation*}
if $\norm{a}_{L^5}+ \norm{\pi_a}_{L^{5/2}} \le C\e_0$ is sufficiently small. Since $v,\td \pi$ is a suitable weak solution of \eqref{NS} in $Q_r(z_0)$,
by Lemma \ref{CKN-Prop1}, we get
\begin{equation}
|v(z_0)|\le
\norm{v}_{L^\infty(Q_{r/2}(z_0))} \le \frac {C_{\CKN}}{r/2}
=  \frac {4C_{\CKN}}{\sqrt{t_0}}.
\end{equation}
This completes the proof of Theorem \ref{th1.1}.
\hfill \qed

\medskip

\section{Local Leray solutions with local $L^3$ data}
\label{S5}

In this section, we present the proofs of Corollaries \ref{cor1.2},
 \ref{cor1.3} and \ref{cor1.4}.

\begin{proof}[Proof of Corollary \ref{cor1.2}]
Let
\begin{equation}\label{scaling}
u_0(x)=\de v_0(\de x), \quad
u(x,t)=\de v(\de x, \de^2 t), \quad
p(x,t)=\de^2 \pi(\de x, \de^2 t).
\end{equation}
Then $(u,p)$ is a local Leray solution of \eqref{NS} with initial data $u_0 \in E^2$ and $\norm{u_0}_{L^3(B_1)} \le \e_0$.
By Lemma \ref{apriori-bounds} with $(s,q)=(2,3/2)$,
\begin{equation*}
\esssup_{0\leq t \leq \sigma }\sup_{x_0\in \RR^3} \int_{B_1(x_0)}| u|^2  \,dx + \sup_{x_0\in \RR^3}  \int_0^{\sigma}\!\!\!\int_{B_1(x_0)} |\nabla  u|^2\,dx\,dt <C_0 N_1 ,
\end{equation*}
\begin{equation*}
\sup_{x_0}  \norm{p -c_{x_0,1}(t)}_{L^2(0,\si ; L^{3/2}(B_1(x_0)))}   \le C(2,3/2) N_1,
\end{equation*}
where
\begin{equation*}
N_1 = \sup_{x_0\in \RR^3}  \int_{B_1(x_0)}  {|u_0|^2}\,dx= \sup_{x_0\in \RR^3} \frac 1\de \int_{B_\de(x_0)}  {|v_0|^2}\,dx. 
\end{equation*}
and $\sigma(1) =c_0\, \min\big\{(N_1)^{-2} , 1  \big\}$.
Note that we have used $\delta <1$ in the last inequality.

We may replace $p$ by $p-c_{0,1}(t)$. Then $u_0$, $u$ and $p$
satisfy the assumptions in Theorem \ref{th1.1} with $M=(C_0+C(2,3/2))N_1$.
By Theorem \ref{th1.1},
 there exists
$T_1=\e(1+M)^{-6} \in (0,\si]$ such that
$u$ is regular in $B_{1/4}  \times (0,T_1)$ with
\[
|u(x,t)| \le \frac {C_1}{\sqrt t},\quad \text{in}\quad B_{1/4}  \times (0,T_1),
\]
and
\[\sup_{z_0  \in B_{\frac14}\times (0,T_1)} \sup_{0<r<\infty}\frac{1}{r^{2}}\int_{Q_r(z_0) \cap [B_1 \times (0,T_1)]}\abs{u}^3
dz\le 1.
\]
The condition $T_1 \le \si$ is clearly satisfied if we had chosen $\e \le c_0$.
Back to $v$, we have
\[
|v(x,t)| \le \frac {C_1}{\sqrt t},\quad \text{in}\quad B_{\de/4}  \times (0,T_1 \de^2).
\]
and
\[\sup_{z_0  \in B_{\de/4}\times (0,T_1\de^2)} \sup_{0<r<\infty}\frac{1}{r^{2}}\int_{Q_r(z_0) \cap [B_{\de/4}  \times (0,T_1 \de^2)]}\abs{v}^3
dz\le 1.
\]
This completes the proof of Corollary \ref{cor1.2}.
\end{proof}

\begin{proof}[Proof of Corollary \ref{cor1.3}]
If $\de \in (0,1]$, the Corollary is a direct
consequence of Corollary \ref{cor1.2}, since smallness of $L^3$-norm in $B_\de(x_0)$ is assumed to be uniform in $x_0$. If $ \de >1$, then \eqref{cor1.3-1} is also valid for $\de =1$ and the Corollary follows from the case $\de=1$.
\end{proof}

\begin{proof}[Proof of Corollary \ref{cor1.4}]
Fix $x_0\in\R^3$ such that $\rho(x_0)=\rho(x_0;v_0)>0$. We may assume $\rho(x_0) \le 1$. By Corollary
\ref{cor1.2}, we obtain
\[
|v(x,t)| \le \frac {C_1}{\sqrt t}, \quad \text{in }\ B_{\rho(x_0)/4}(x_0) \times (0,T(x_0)),
\]
with $T(x_0)=T_1(M)\rho^2(x_0)$ and
$M = CN_{\rho(x_0)}$. The claim then follows by taking $T_1(M) = \e(1+M)^{-6}$.
\end{proof}

\section{Solutions with data in Herz spaces}
\label{S6}

In this section we prove Theorem \ref{main-thm} and Corollary \ref{cor1.7} for initial data in the Herz space $K_3$.

\medskip

\begin{lemma}\label{th7.2}
The inclusion $K_3 \subset L^2_{\mathrm{uloc}}$ holds. Moreover there exists a positive constant $C$ such that 
\[
N_R:= \sup_{x\in\R^3}\frac{1}{R} \int_{B_R(x)} \abs{v_0(x)}^2\le
CR\norm{v_0}^2_{K_3},
\]
holds for any $R>0$.
\end{lemma}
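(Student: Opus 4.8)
\emph{Proof plan.} The plan is to prove the single estimate
\[
\sup_{x\in\R^3}\int_{B_R(x)}|v_0(y)|^2\,dy \;\le\; C\,R\,\norm{v_0}^2_{K_3}\qquad (R>0),
\]
with $C$ an absolute constant; taking $R=1$ this gives $K_3\subset L^2_{\uloc}$, and dividing by $R$ yields the bound on $N_R$ claimed in the lemma. First I would decompose $\R^3\setminus\{0\}=\bigcup_{k\in\ZZ}A_k$ and write $\int_{B_R(x)}|v_0|^2=\sum_{k\in\ZZ}\int_{B_R(x)\cap A_k}|v_0|^2$. On each annular piece, H\"older's inequality with exponents $3/2$ and $3$ gives $\int_{B_R(x)\cap A_k}|v_0|^2\le|B_R(x)\cap A_k|^{1/3}\norm{v_0}^2_{L^3(A_k)}$; since $K_3=\dt K^{0}_{3,\infty}$ and hence $\norm{v_0}_{K_3}=\sup_j\norm{v_0}_{L^3(A_j)}$, this is $\le|B_R(x)\cap A_k|^{1/3}\norm{v_0}^2_{K_3}$. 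Thus everything reduces to the purely geometric estimate $\sum_{k\in\ZZ}|B_R(x)\cap A_k|^{1/3}\le C R$.

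For that estimate I would use $|B_R(x)\cap A_k|\le\min(|A_k|,|B_R(x)|)\le c\min(2^{3k},R^3)$, so that $|B_R(x)\cap A_k|^{1/3}\le c^{1/3}\min(2^k,R)$, together with the fact that $B_R(x)\cap A_k=\emptyset$ unless $2^{k-1}<|x|+R$, and then split into two regimes. If $|x|\le 2R$, every nonempty term has $2^k<2(|x|+R)\le 6R$, so $\sum_k|B_R(x)\cap A_k|^{1/3}\le c^{1/3}\sum_{2^k<6R}2^k\le 12\,c^{1/3}R$ by summing the geometric series. If $|x|>2R$, then no annulus with $2^k\le R$ meets $B_R(x)$, since such an $A_k$ lies in $B_R(0)$ which is disjoint from $B_R(x)$; moreover every annulus meeting $B_R(x)$ has $2^k\in(|x|-R,\,2(|x|+R))$, an interval whose endpoint ratio is $<6$ because $R<|x|/2$, so at most an absolute number $N_0\le 3$ of indices $k$ contribute, each by at most $c^{1/3}R$. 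Combining the two cases gives $\sum_k|B_R(x)\cap A_k|^{1/3}\le C R$, whence the estimate and the lemma.

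The only nonroutine step is this geometric estimate: one must simultaneously control the infinitely many small annuli accumulating at the origin — handled by the convergent geometric series $\sum_{2^k<6R}2^k\approx R$ — and, when $B_R(x)$ is far from the origin, exploit that $B_R(x)$ meets only boundedly many dyadic scales. As a by-product the same computation shows $v_0\in L^2_{\loc}(\R^3)$ (including near the origin), so all the quantities above are finite; and I expect the pointwise bound $\int_{B_R(x)}|v_0|^2\le C R\norm{v_0}^2_{K_3}$ to be exactly what is fed into Lemma~\ref{apriori-bounds} when proving Theorem~\ref{main-thm}, producing constants $\sigma_1,C_2$ that depend only on $\norm{v_0}_{K_3}$. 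The dyadic decomposition, the H\"older step, and the identification of the $K_3$-norm with $\sup_k\norm{v_0}_{L^3(A_k)}$ are all standard.
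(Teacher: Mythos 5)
Your proof is correct and takes essentially the same approach as the paper: both arguments split into the cases $|x|\le 2R$ and $|x|>2R$, apply H\"older per dyadic piece against $\sup_k\|v_0\|_{L^3(A_k)}=\|v_0\|_{K_3}$, and sum a geometric series near the origin. The only cosmetic difference is that the paper handles the far case $|x|>2R$ with a single H\"older on $B_R(x)\subset B_{|x|/2}(x)$ using the equivalent norm $\sup_{x\neq 0}\|v_0\|_{L^3(B_{|x|/2}(x))}$, whereas you decompose into the fixed annuli $A_k$ in both cases and confine the case split to the geometric estimate $\sum_k|B_R(x)\cap A_k|^{1/3}\lesssim R$.
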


\begin{proof}
Fix $R>0$. If $|x|>2R$, then
\EQN{
\int_{B_R(x)} |v_0|^2 &\le \bke{ \int_{B_R(x)} |v_0|^3}^{2/3} \norm{1}_{L^3(B_R(x))}  \le \bke{ \int_{B_{\frac{|x|}2}(x)} |v_0|^3}^{2/3}CR
\\
& \le C \norm{v_0}_{K_3}^2 R.
}
On the other hand, if $|x|\le 2R$, then
\[
\int_{B_R(x)} |v_0|^2 \le \int_{B_{3R}(0)} |v_0|^2 \le
\sum_{k=0}^\infty \int _{x \sim 2^{-k}3R} |v_0|^2
\le  \sum_{k=0}^\infty C \norm{v_0}_{K_3}^2 2^{-k}R  = C\norm{v_0}_{K_3}^2R.
\]
These estimate show the desired bound.
\end{proof}

Note that $K_3 \not \subset E^2$, as shown by the example \eqref{K3notE2}. 

\begin{proof}[Proof of Theorem \ref{main-thm}]

By Lemma \ref{apriori-bounds} with $(s,q)=(2,3/2)$, for any $R>0$, we have 
\begin{equation}\label{Oct27-100}
\sup_{0<t<\sigma
R^2}\sup_{x\in\R^3}\frac{1}{R}\int_{B_R(x)}\abs{v}^2+\sup_{x\in\R^3}\frac{1}{R}\int_0^{\sigma
R^2}\int_{B_R(x)}\abs{\nabla v}^2
\le CN_R
\le C\norm{v_0}^2_{K_3},
\end{equation}
\begin{equation}
\label{Feb21-100}
 \sup_{x\in \RR^3}  \frac 1{R} \bke{ \int_0^{\sigma R^2}\!\!\bke{  \int_{B_R(x)} |\pi-c_{x,R}(t)|^{3/2}}^{4/3} }^{1/2} \le C\|v_0\|_{K_3}^{2},
\end{equation}
for $\si = \si(\|v_0\|_{K_3})$ independent of $R$.

Let $\mu>0$ be the small constant in \eqref{th1.3-1}.
For $x_0\in\R^3$ with $x_0\neq 0$, let $\delta = \mu|x_0|$.  By \eqref{th1.3-1}, we have
\EQ{
\int_{B_\de(x_0)} |v_0(x)|^3 dx \le \e_0^3.
}
Here $\e_0$ is the constant from Theorem \ref{th1.1}.

By the same proof of Corollary \ref{cor1.2}, we have
\[
|v(x,t)| \le \frac {C_1}{\sqrt t},\quad \text{in}\quad B_{\de/4}  \times (0,T_1 \de^2).
\]
and
\[\sup_{z_0  \in B_{\de/4}\times (0,T_1\de^2)} \sup_{0<r<\infty}\frac{1}{r^{2}}\int_{Q_r(z_0) \cap [B_{\de/4}  \times (0,T_1 \de^2)]}\abs{v}^3
dz\le 1.
\]
We do not need the assumption $\de \le 1$ since our a priori bounds \eqref{Oct27-100} and \eqref{Feb21-100}
are valid for all $R \in (0,\infty)$.

This completes the proof of Theorem \ref{main-thm}.
\end{proof}

\begin{proof}[Proof of Corollary \ref{cor1.7}]
By \cite[Lemma 3.1]{BT1}, $v_0 \in K_3$.
We will show $v_0$ satisfies \eqref{th1.3-1} for some $\mu \in (0,1)$.
Because of the discrete self-similarity, it suffice to consider the region
$A:=\{x \in \R^3; \frac12 \le |x|<\frac32 \lambda \}$. Since $v_0$ is locally $L^3$,
there exists $L>0$ such that $\int_A|v_0|^3 <L$.
We now define $r_i$ ($i=1,2,\cdots$) iteratively as
$$
r_0=\frac12,
\qquad
r_{i+1}=\sup \bket{r>0; \int_{r_i \le |x| \le r}|v_0|^3 \le \frac{\ve}{2} }
$$
unless $r_{i+1} \ge \frac{3}{2}\lambda$.
This iteration stops at finite steps, namely, there exists $j$
such that $r_j \ge \frac32\lambda$.
We then set
$$
\mu:= \min \bket{\frac12, \frac{r_i}{\lambda}~;~i=1,2,\cdots,j }.
$$
If $1\le |x|\le \lambda$, we can find some $i=1,2,\cdots,j$ such that
$$
B_{\mu|x|}(x) \subset S_i \cup S_{i+1}
$$
where $S_i:=\bket{x \in \R^3~;~r_i \le |x|\le r_{i+1}}$. Hence
by the definition of $r_i$, we see $\int_{B_{\mu|x|}(x)} |v_0|^3 \le \ve$.
Now the first part is  a direct consequence of Theorem \ref{main-thm}.
The second part can be also shown by the arguments in \cite[Lemma~3.3]{Tsai-DSSI}.
Since its verification is similar to that in \cite[Lemma~3.3]{Tsai-DSSI},
we omit the details.
\end{proof}

\section{Appendix 1: Properties of local Leray solutions}
\label{S7}

In this appendix we prove Lemmas  \ref{pressure-decomposition} and \ref{apriori-bounds}.
We will prove Lemma \ref{pressure-decomposition} following the approach of Maekawa-Miura-Prange \cite[\S3]{MaMiPr} (our case in $\R^3$ is of course simpler), and using the estimates in Maekawa-Terasawa \cite{MaTe}.

\begin{lemma}[Linear $L^p_\uloc$ estimate in $\R^d$  \cite{MaTe} Corollary 3.1]
\label{Maekawa-Terasawa}
Let $ 1\le q \le p \le \infty$. 
For $f \in L^q_\uloc(\R^d)$ and $m=0,1$, we have 
\EQ{\label{MaTe1}
\norm{\nb^m e^{t\De}f }_{L_\uloc^p} \lec t^{-m/2} (1 + t^{-\frac d2(\frac 1q - \frac 1p)}) \norm{f}_{L_\uloc^q}.
}
\EQ{\label{MaTe2}
\norm{ e^{t\De}\mathbb P \nb \cdot F }_{L_\uloc^p} \lec t^{-1/2} (1 + t^{-\frac d2(\frac 1q - \frac 1p)}) \norm{f}_{L_\uloc^q}.
}
\end{lemma}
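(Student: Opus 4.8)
The plan is to exploit the fact that $L^q_\uloc$ carries a fixed unit length scale: at small $t$ the bound should collapse to the scale-invariant heat-kernel estimate, while at large $t$ it should collapse to the uniform $L^q_\uloc$-boundedness of $e^{t\De}$, and the factor $1+t^{-\frac d2(\frac1q-\frac1p)}$ is precisely what interpolates the two regimes. I would fix $x_0\in\R^d$, bound $\norm{\nb^m e^{t\De}f}_{L^p(B_1(x_0))}$ by a constant independent of $x_0$, and take the supremum at the end. The starting point is the decomposition $f=\sum_{z\in\ZZ^d}f_z$, $f_z=f\,\mathbf{1}_{Q_z}$ with $Q_z=z+[0,1)^d$, so that $\norm{f_z}_{L^q(\R^d)}\le\norm{f}_{L^q_\uloc}$ for every $z$; correspondingly I would split $\sum_z\nb^m e^{t\De}f_z$ into the $O(1)$ \emph{near} indices with $\dist(Q_z,B_1(x_0))\le 10$ and the \emph{far} indices with $R_z:=\dist(Q_z,B_1(x_0))>10$.

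For the near terms I would apply Young's convolution inequality together with the scaling identity $\nb^m G_t(x)=t^{-(d+m)/2}(\nb^m G_1)(x/\sqrt t)$, which gives $\norm{\nb^m G_t}_{L^r(\R^d)}\lec t^{-m/2}t^{-\frac d2(1-1/r)}$ since $G_1$ is Schwartz; choosing $r$ by $1/r=1-(1/q-1/p)\in[0,1]$ yields $\norm{\nb^m e^{t\De}f_z}_{L^p(\R^d)}\le\norm{\nb^m G_t}_{L^r}\norm{f_z}_{L^q}\lec t^{-m/2}t^{-\frac d2(1/q-1/p)}\norm{f}_{L^q_\uloc}$, and summing the boundedly many near terms costs only a constant. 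For $t\le 1$ this is $\lec t^{-m/2}(1+t^{-\frac d2(1/q-1/p)})$ trivially, and for $t\ge 1$ it is $\lec t^{-m/2}$; in both ranges it is dominated by the asserted bound.

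For the far terms I would use the pointwise estimate $|\nb^m G_t(z)|\lec t^{-(d+m)/2}e^{-c|z|^2/t}$ (the polynomial prefactors absorbed into the exponential). For $x\in B_1(x_0)$ and $y\in Q_z$ with $z$ far one has $|x-y|\gec R_z$, hence $|\nb^m e^{t\De}f_z(x)|\lec t^{-(d+m)/2}e^{-cR_z^2/t}\norm{f_z}_{L^1(Q_z)}\lec t^{-(d+m)/2}e^{-cR_z^2/t}\norm{f}_{L^q_\uloc}$, using $|Q_z|=1$. Since $\#\{z:R_z\in[n,n+1)\}\lec n^{d-1}$, summing over far $z$ produces, uniformly on $B_1(x_0)$, the bound $t^{-(d+m)/2}\big(\sum_{n\ge 10}n^{d-1}e^{-cn^2/t}\big)\norm{f}_{L^q_\uloc}$. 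For $t\le 1$ the series is $\lec e^{-c'/t}$, so the product is $\lec 1\lec t^{-m/2}(1+t^{-\frac d2(1/q-1/p)})$; for $t\ge 1$, comparison with $\int_{10}^\infty r^{d-1}e^{-cr^2/t}\,dr\lec t^{d/2}$ shows the series is $\lec t^{d/2}$, so the product is $\lec t^{-m/2}$. As all these bounds are uniform on $B_1(x_0)$ and $|B_1|\lec 1$, taking the $L^p(B_1(x_0))$-norm and then $\sup_{x_0}$ proves \eqref{MaTe1}.

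Estimate \eqref{MaTe2} (whose right-hand side should read $\norm{F}_{L^q_\uloc}$) follows from the identical scheme with $\nb^m G_t$ replaced by the kernel $S_t$ of $e^{t\De}\mathbb P\nb\cdot$; the only inputs are the scaling $S_t(x)=t^{-(d+1)/2}S_1(x/\sqrt t)$, the bound $|S_1(x)|\lec(1+|x|)^{-(d+1)}$ (giving $S_1\in L^r(\R^d)$ for all $r\in[1,\infty]$, hence the near-term estimate via Young), and the far-field bound $|S_t(x)|\lec\min\big(t^{-(d+1)/2},|x|^{-(d+1)}\big)$. The far series $\sum_{n\ge 10}n^{d-1}\min(t^{-(d+1)/2},n^{-(d+1)})$ is then $\lec\sum_n n^{-2}\lec 1$ for $t\le 1$ and $\lec t^{-1/2}$ for $t\ge 1$ (split at $n\sim\sqrt t$), matching $t^{-1/2}(1+t^{-\frac d2(1/q-1/p)})$ in both ranges. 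I do not expect any genuine obstacle here: the only care needed is in separating the two time regimes, in carrying out the far-field summation, and in the endpoint $p=\infty$ (handled throughout by the pointwise kernel bounds); the conceptual content is simply the tension between the scale-invariant kernel estimates and the fixed unit scale built into $L^q_\uloc$.
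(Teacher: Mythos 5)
The paper does not prove this lemma: it is stated as a direct citation of Maekawa--Terasawa \cite{MaTe}, Corollary~3.1, with no argument supplied. So there is no in-paper proof to compare against, and your submission is a genuine self-contained derivation. Your proof is correct. The decomposition $f=\sum_z f_z$ over unit cubes, Young's inequality against $\norm{\nb^m G_t}_{L^r}$ with $1/r=1-(1/q-1/p)$ for the near part, and pointwise Gaussian decay summed over far cubes, is the standard mechanism for $L^p_\uloc$ estimates and correctly produces the factor $1+t^{-\frac d2(\frac1q-\frac1p)}$, which interpolates between the scale-invariant smoothing regime $t\le1$ and the bare-semigroup regime $t\ge1$. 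The endpoints $p=\infty$, $q=1$, and $p=q$ are handled without issue by your choice of $r$ and by the pointwise kernel bounds. For the second estimate, your use of the kernel scaling $S_t(x)=t^{-(d+1)/2}S_1(x/\sqrt t)$ together with the polynomial decay $|S_1(x)|\lesssim(1+|x|)^{-(d+1)}$ (the Oseen-tensor derivative bound, which replaces the lost Gaussian tail) is exactly what is needed, and the split of the far sum at $n\sim\sqrt t$ correctly recovers the $t^{-1/2}$ for $t\ge1$. You also correctly noted that the right-hand side of \eqref{MaTe2} should read $\norm{F}_{L^q_\uloc}$, which is a typographical slip in the paper as written.
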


\begin{proof}[Proof of Lemma \ref{pressure-decomposition}]

By the definition of a local energy solution, there is $A\in (0,\infty)$ such that
\[
\esssup_{0<t<T}
\norm{v(t)}_{ L^2_\uloc} ^2 +\sup_{x\in\R^3}  \int_0^T \int_{B_1(x)} |\nb v|^2 \le A.
\]
Since $v_0 \in E^2$, there are $v_0^\e \in C^\infty_{c,\si}$, $v_0^\e \to v_0$ in $L^2_{\uloc}$ as $\e\to 0$.
Fix a radial smooth nonnegative function $\phi$ such that
\[
\phi = 1 \quad \text{in}\quad B_1, \qquad \spt \phi \subset B_{3/2}.
\]
For $\phi_\e(x) = \phi(\e x)$, let
\[
v^\e (t) = e^{t \De } v_0^\e + \int_0^t  e^{(t-s)\De}\mathbb P \nb \cdot [\phi_\e v \otimes v](s) ds,
\]
and
\[
\bar v (t) = e^{t \De } v_0 + \int_0^t  e^{(t-s)\De}\mathbb P \nb \cdot [ v \otimes v](s) ds.
\]
By Lemma \ref{Maekawa-Terasawa} and $v\otimes v \in L^\infty(0,T; L^1_\uloc)$, for $0<t<T<\infty$ and $1\le q <3/2$
\EQN{
\norm{v^\e(t) , \bar v(t)}_{L^q_\uloc}
&\lec \norm{v_0} _{L^q_\uloc} + \int_0^t  s^{-1/2} (1 + s^{-\frac 32( 1 - \frac 1q)}) \norm{v\otimes v (s)}_{ L^1_\uloc}ds
\\
&\lec  \norm{v_0} _{L^2_\uloc} + (T^{1/2} + T^{\frac 3{2q}-1})\norm{v\otimes v }_{ L^\infty(0,T; L^1_\uloc)}.
}
Thus for any $1\le q <3/2$, 
\EQ{ \label{eq3.3a}
\norm{
v^\e , \bar v}_{L^\infty(0,T; L^q_\uloc)} \le C\norm{v_0} _{L^2_\uloc}+ C(T^{1/2} + T^{\frac 3{2q}-1})A.
}
We now show that
\EQ{ \label{eq3.4}
\lim_{\e \to 0_+}\norm{v^\e - \bar v}_{L^\infty(0,T; L^q_\uloc)} =0 ,\quad \text{ for  all} \ q < 3/2.
}
Indeed,\footnote{This would be quite easy if we further assume
\EQ{\label{1212a}
\norm{(1-\phi_\e) v\otimes v : L^\infty(0,T; L^1_\uloc)} \to 0 \quad \text{as}\quad \e\to 0.
} Note that our assumption \eqref{spatial-decay} is slightly weaker than such an assumption.}
for $f^\e = (1-\phi_\e) v \otimes v $ and $0<\de <\min(t,1)$,
we decompose
\EQN{
 \int_0^t e^{-(t-s){\bf A}} \mathbb{P} \nabla \cdot f^\e d s
 = (\int_{t-\de}^t + \int_0^{t-\de}) = I_1 + I_2.
}
We have
\[
\norm{I_1}_{L^{q}_{\uloc}}  \le C \int_{|t-s|<\delta} |t-s|^{-2+\frac{3}{2q}} \norm{v}_{L^\infty L^2}^2ds
\le CA \de^{\frac{3}{2q}-1}.
\]
Rewriting
\EQN{
I_2(x,t) = \int_0^{t-\de} \int_{\R^3}\nb_xS(x-y,t-s) f^\e(y,s)ds,
}
where $S(x,y)$ is the Oseen tensor of the Stokes system in $\R^3$ and using the well-known estimate
\[
|\nb_x S(x,t)| \le C_m (|x| + \sqrt t)^{-4}, \quad m \in \NN,
\]
we have 
\EQN{
| I_2(x,t)| &\le \sum_{k \in \ZZ^3} \int_0^{t-\de} \int_{B_1(x+k)} |\nb_xS(x-y,t-s) ||f^\e(y,s)|dy\,ds
\\
&\le  \sum_{k \in \ZZ^3} C ((|k|-1)_+ + \sqrt \de)^{-4} \int_0^{t-\de} \int_{B_1(x+k)} |f^\e(y,s)|dy\,ds
\\
&\le C\de^{-2} F_\e, 
}
where $F_\e = \sup_{y \in \R^3} \int_0^T \int_{B_1(y)} |f^\e(y,s)|dy\,ds$.
By assumption \eqref{spatial-decay} with $R=\max(1,\sqrt T)$, we know that
$\lim_{\e \to 0} F_\e = 0$.
For any $\tau >0$, we can first choose $\de>0$ such that $ CA \de^{\frac{3}{2q}-1}<\frac12 \tau$, and then choose $\e$ such that $C\de^{-2}F_\e <\frac12 \tau|B_1|^{-1/q}$. Then
\[
\norm{I_1 + I_2}_{L^{q}_{\uloc}}(t)  \le \tau.
\]
Because the choices of $\de$ and $\e$ are uniform in $t$, we have shown \eqref{eq3.4}.

\bigskip

Note $v^\e$ is a weak solution of the inhomogeneous Stokes system
\EQ{\label{ve.eq}
\pd_t v^\e - \De v^\e + \nb p^\e = -G_\e, \quad \div v^\e=0,
\quad  v^\e|_{t=0} = v_0^\e,
}
with associated pressure $p^\e$,
where
\[
G_\e:= \nb \cdot (\phi_\e v\otimes v), \quad
\norm{G_\e}_
{L^{\frac54}((0,T)\times \R^3)} \le C(\e,A,T).
\]
Thus, by the maximal regularity estimate,
\[
\norm{\pd_t v^\e, \nb^2 v^\e, \nb p^\e}_{L^{\frac54}((0,T)\times \R^3)}
+ \norm{ p^\e}_{L^{\frac54}(0,T; L^{\frac{15}7}(\R^3))}
\le C(\e,A,T),
\]
and
\EQ{\label{pe.formula}
p^\e(x,t) &=\int \frac {1} {4\pi |x-y|} \nb\cdot G_\e(y,t) dy
\\
&=-\frac 13 \phi_\e |v|^2(x,t) + \int K(x-y):( \phi_\e v \otimes v)(y,t) dy,
}
where $K(y)= {\rm p.v.}\nabla^2 (4\pi|y|)^{-1}$.

We now decompose $p^\e$.
For fixed $x_0\in \R^3$ and $R>0$, we
let $\psi(x) = \phi(\frac {x-x_0}{2R})$ and
decompose \eqref{pe.formula} for $x\in B_{\frac32R}(x_0)$ as
\EQN{
p^\e(x,t) &=p^\e_\loc(x,t) + p^\e_\far(x,t) + c^\e(t)
\\
p^\e_\loc(x,t) & = -\frac 13  \phi_\e |v|^2(x,t) + \int K(x-y):(\psi \phi_\e v \otimes v)(y,t) dy
\\
p^\e_\far(x,t) & = \int \bkt{K(x-y) - K(x_0-y)}:( (1-\psi)\phi_\e 
v \otimes v)(y,t) dy
\\
c^\e(t) & = \int  K(x_0-y):((1-\psi) \phi_\e v \otimes v)(y,t) dy
}
By the Calderon-Zygmund estimate, for any $q>1$ we have
\[
\int_{B_{\frac32R}(x_0)}|p^\e_\loc(x,t)|^q dx \le c_q
\int_{B_{3R}(x_0)}|v(x,t)|^{2q} dx.
\]
Thus
\[
\norm{p^\e_\loc}_{L^s(0,T; L^q(B_{\frac32R}(x_0))} \le c
\norm{v}_{L^{2s}(0,T; L^{2q}(B_{3R}(x_0))}^2 ,
\]
which is a priori bounded by $A$ if $\frac 2{2s}+ \frac 3{2q} \ge \frac 32$, i.e.,
$\frac 2{s}+ \frac 3{q} \ge 3$ and if $1<q \le 3$.

For $p^\e_\far$ we have a pointwise bound,
\EQN{
|p^\e_\far(x,t) | &\le \int_{2R<|y-x_0|} \frac {cR}{|x_0-y|^4} |v(y,t)|^2 dy
\\
& \le \sum_{0 \not = k \in \ZZ^3} \int_{B_R(x_0+Rk)} \frac {cR}{|Rk|^4} |v(y,t)|^2 dy
\\
& \le c R^{-3} \norm{v(t)}_{L^2_{\uloc,R}}^2.
}
Thus,
for $\frac 2{s}+ \frac 3{q} \ge 3$,
\EQ{ \label{pe.uest}
&\norm{p^\e_\loc}_{L^s(0,T; L^q(B_{\frac32R}(x_0))} + \norm{p^\e_\far}_{L^s(0,T; L^q(B_{\frac32R}(x_0))} \\
\le &c A +
 cT^{1/s} R^{3/q-3} \esssup_{0<t<T}  \norm{v(t)}_{L^2_{\uloc,R}}^{2}
 \\
 \le &c(T,R,s,q)A.
}

By regarding $\nb p^\e$ as a given forcing term in \eqref{ve.eq} with the uniform estimate \eqref{pe.uest}, we can invoke the local regularity estimate of the inhomogeneous heat equation, which results in, for any $\de\in (0,T)$ and $q=r^*$, i.e., $1/q=1/r-1/3$,
\begin{align}
& \norm{\pd _t v^\e,\ \nb^2 v^\e}_{ L^{s}(\de,T; L^{r}(B_R(x_0)))}
\notag
\\
&\le
 c \norm{ v^\e}_{ L^{1}(0,T; L^{1}(B_{2R}(x_0)))} + c\norm{|\phi_\e v\otimes v| +  |p^\e_\loc+ p^\e_\far|}_{ L^{s}(0,T; L^{q}(B_{\frac 32R}(x_0)))}
 \notag
   \\
&\le C(\de,T,A,R,s,q).
\label{ve.uest}
\end{align}
These uniform estimates \eqref{pe.uest} and \eqref{ve.uest}
enable us to pass limits.  %
Let $\bar p_{x_0,R} = \bar p_\loc + \bar p_\far$ in $B_{\frac32R}(x_0) \times (0,T)$ with
\EQN{
\bar p_\loc(x,t) & = -\frac 13   |v|^2(x,t) + \int K(x-y):(\psi v\otimes v )(y,t) dy,
\\
\bar p_\far(x,t) & = \int \bkt{K(x-y) - K(x_0-y)}:( (1-\psi) 
v \otimes v)(y,t) dy.
}
We have the same bound as \eqref{pe.uest},
\EQ{ \label{barp.uest}
\norm{\bar p_{x_0,R}}_{L^s(0,T; L^q(B_{\frac32R}(x_0))}  &\le c(T,R,s,q)A,
}
and 
\EQN{
p^\e_\loc + p^\e_\far &\wkto \bar p_{x_0,R} \quad \text{weakly in} \quad L^s(0,T; L^q(B_R(x_0))).
}

The limit of the weak form of \eqref{ve.eq} shows that $(\bar v,  \bar p_{x_0,R})$ is a distributional solution of
\EQ{\label{barv.eq}
\pd_t \bar v - \De \bar v + \nb \bar p = -\nb \cdot (v \otimes v), \quad \div \bar v=0,
}
and
\[
\lim _{t \to 0+} (\bar v(t), \zeta) = ( v_0, \zeta), \quad \forall \zeta \in C^\infty_c(\R^3).
\]
Note $\nb \bar p_{x_0,R} = \nb \bar p_{y_0,r}$ on $B_R(x_0) \cap B_r(y_0)$.
In particular, we may specify $\bar p$ by setting $\bar  p = \bar p_{0,1}$ on $B_1(x_0)$, and
$\bar  p = \bar p_{0,k}+c_k(t)$ on $B_k(x_0)$, $k \in \NN$, where $c_k(t)$ is the unique function of $t$ 
such that
$\bar p_{0,1} =   \bar p_{0,k}+c_k(t)$ on $B_1(x_0)$. Thus $\bar p$ is defined in $\R^3 \times (0,T)$.
Since $\nb \bar p = \nb \bar p_{x_0,R}$ on $B_R(x_0)$, we have
\[
\bar  p = \bar p_{x_0,R}+c_{x_0,R}(t) \quad \text{on} \quad B_R(x_0)
\]
for some $c_{x_0,R}(t)$. Since $\bar  p , \bar p_{x_0,R} \in L^s(0,T; L^q(B_R(x_0)))$, we get $c_{x_0,R}\in L^s(0,T)$.\snb{There is a bound. Both the fact $c_{x_0,R}\in L^s(0,T)$ and its bound do not seem useful.}
This establishes the pressure decomposition formula provided we have $\bar v=v$.
To this end,we now show the spatial decay of $\bar v$. Fix any $q<3/2$. For any $0<\de\ll 1$, by \eqref{eq3.4},%
\snb{Is this the only place we use \eqref{eq3.4}?}
there is $\e>0$ such that $\norm{\bar v - v^\e}_{L^q(0,T; L^q_\uloc)}^q\le \de$. Since $v^\e \in L^2(\R^3 \times (0,T))$, there is $R>0$ such that
\EQN{
\sup_{|x|>R} \int_0^T \int_{B_1(x)} |v^\e|^q  <\de.
}
Thus
\EQ{\label{barv.decay}
\sup_{|x|>R} \int_0^T \int_{B_1(x)} |\bar v|^q  \le C\de .
}

It remains to show $\bar v =v$. Let $u=v - \bar v$. It satisfies by \eqref{spatial-decay}%
\snb{Can we replace  \eqref{spatial-decay} by \eqref{1214b}? If not, then \eqref{spatial-decay} is a better assumption than \eqref{1214b}. See Question A.}
and \eqref{barv.decay}, for any $q<3/2$,
\EQ{
u \in L^\infty (0,T; L^q_{\uloc}), \quad
\lim_{|x|\to \infty} \int_0^T \int_{B_1(x)} |u|^q  =0,
}
and
\EQ{\label{3.16}
\pd_t u - \De u + \nb \pi=0, \quad \div u =0, \quad u(t=0)=0.
}
Let $\eta_\e(x) =\e^{-3} \phi(x/\e)$ be a mollification kernel, and
$\om _\e= \eta_\e * \curl u$. The vector field $\om_\e$ is a bounded solution of the heat equation with $\om_\e(t=0)=0$. Thus $\om_\e=0$ for all $\e>0$. For any fixed $t$, $W_{\e,t}(x) = \int_0^t \eta_\e * u(s)\, ds$ is a bounded harmonic vector field which vanishes at spatial infinity. Thus $W_{\e,t}\equiv 0$. Thus $u\equiv 0$.
\end{proof}

\begin{proof}[Proof of Lemma \ref{apriori-bounds}]
Let 
\[
A_R(t)=  \sup_{x\in \R^3} \bket{ \sup_{0<s<t} \frac 1R \int_{B_R(x)}|v(y,s)|^2 dy +
\frac{1}{R}\int^{R^2}_0 \int_{B_R(x)}|\nabla v(y,s)|^2dyds}.
\]
The standard energy estimate  as in  \cite[Lemma 3.2]{KiSe} based on
 the local energy inequality and the pressure estimate \eqref{barp.uest}  
 gives \snb{abuse of notation $A(t), A(r),A_0$. maybe stick to notation of \cite{KiSe}.

\crm{TT 0206: How about $\al_R(t)$? Note $R=1$ in  \cite{KiSe}. }
}
\[
A_R(t) \le N_R + CR^{-2}\int_0^t \bke{A_R(s) + A_R(s)^3}ds.
\]
Thus $A(t) \le 2 A_0$ for $t<\la R^2$ provided
\[
\la \le \frac{c}{1+A_0^2},
\]
which yields \eqref{ineq.apriorilocal}.
Estimate  \eqref{p.apriorilocal} then follows from \eqref{barp.uest}.
We have shown
Lemma \ref{apriori-bounds}.
\end{proof}
\section*{Acknowledgments}
We thank Professor Yohei Tsutsui for useful information about the Herz spaces.
We also thank Professors Yasunori Maekawa and Christophe Prange for valuable discussions 
 about the pressure decomposition.
The research of Kang was partially supported by NRF- 2017R1A2B4006484
and by the Yonsei University Challenge of 2017. The research of Miura was partially supported
by JSPS grant 17K05312. The research of Tsai was partially supported
by NSERC grant 261356-18.

\end{document}